\documentclass[11pt,a4paper]{article}
 \usepackage{indentfirst, latexsym,bm}
 \usepackage{amsmath}
 \usepackage{pifont}
 \usepackage{amsfonts}
 \usepackage{mathrsfs}
 \usepackage{array}
 \usepackage{multirow} 
 \usepackage{graphicx}
 \usepackage{subfigure}
 \usepackage{picinpar}
 \usepackage{setspace}
 \usepackage[textsize=footnotesize]{todonotes}
 \RequirePackage[colorlinks,citecolor=blue,urlcolor=blue,linkcolor=blue]{hyperref}
 \usepackage[top=2cm,bottom=2cm, outer=2cm, inner=2cm]{geometry}
 
 \usepackage{caption}
 \usepackage{subcaption}
 \usepackage[labelfont={bf,small},textfont={small}]{caption}
 
 \RequirePackage[numbers]{natbib}
 \usepackage{enumitem} % math notations i.e. \mathscr
 
 \usepackage{booktabs}
 \usepackage{authblk}
 
 \usepackage{threeparttable}
 \usepackage{mathrsfs,amsfonts,amsmath}
 
 \usepackage{color}
 
 \makeatletter
 \def\namedlabel#1#2{\begingroup
 	#2%
 	\def\@currentlabel{#2}%
 	\phantomsection\label{#1}\endgroup
 }
 \makeatother

 \numberwithin{figure}{section}

 \newcommand\email[1]{\href{mailto:#1}{ \nolinkurl{#1}}}

 \newtheorem{theorem}{Theorem}[section]
 \newtheorem{definition}[theorem]{Definition}
 \newtheorem{lemma}[theorem]{Lemma}
 \newtheorem{corollary}[theorem]{Corollary}
 \newtheorem{proposition}[theorem]{Proposition}
 \newtheorem{remark}[theorem]{Remark}
 \newtheorem{condition}[theorem]{Condition}
 \newtheorem{example}[theorem]{Example}

 \def\blemma{\begin{lemma}}\def\elemma{\end{lemma}}
 \def\bproposition{\begin{proposition}}\def\eproposition{\end{proposition}}
 \def\ttheorem{\begin{theorem}}\def\etheorem{\end{theorem}}
 \def\bcorollary{\begin{corollary}}\def\ecorollary{\end{corollary}}
 \def\bremark{\begin{remark}}\def\eremark{\end{remark}}
 \def\bcondition{\begin{condition}}\def\econdition{\end{condition}}

 \def\benumerate{\begin{enumerate}}\def\eenumerate{\end{enumerate}}
 \def\bitemize{\begin{itemize}}\def\eitemize{\end{itemize}}

 \def\beqlb{\begin{eqnarray}}\def\eeqlb{\end{eqnarray}}
 \def\beqnn{\begin{eqnarray*}}\def\eeqnn{\end{eqnarray*}}
 \def\ar{\!\!\!&}

 \def\proof{\noindent{\it Proof.~~}}\def\qed{\hfill$\Box$\medskip}

\begin{document} 

  \title{\bf  \Large Local Asymptotics for Entrance Laws of  Reflected L\'evy Excursions}
 
 \author{Zhi-Hao Cui\footnote{School of Mathematical Sciences, Nankai University, China; email: cuizh.math@gmail.com} \quad  \  Hao Wu\footnote{School of Mathematical Sciences, Nankai University, China; email: wuhao.math@outlook.com} 
 	\quad\ and\quad 
 	Wei Xu\footnote{School of Mathematics and Statistics, Beijing Institute of Technology,  China; email: xuwei.math@gmail.com. Xu gratefully acknowledges financial support from the Fundamental Research Funds for the Central Universities, the National Natural Science Foundation of China (No. 11531001) and the National Key R\&D Program of China (No. 2023YFA1010103).} 
 }  
 \maketitle

 \begin{abstract}
 This paper establishes uniform local large-deviation asymptotics for the entrance law $\underline n_t(dx)$ of excursions of a L\'evy process with negative drift and locally regularly varying L\'evy measure $\nu$, reflected at its running infimum. 
 In the fixed-time regime, we prove that as $x\to\infty$,  
 $$
 \underline n_t\big((x,x+\delta]\big)
 \sim \underline n(\zeta\wedge t)\, \nu\big((x,x+\delta]\big),
 $$ 
 uniformly in  $\delta\in[\delta_0,\infty]$ for every $\delta_0>0$. 
 Our principal result concerns the large-time regime and shows that, as $t\to\infty$,  
 $$ 
 \underline n_t\big((x,x+\delta]\big)
  \sim \int_x^{x+\delta} V(z) \,\nu(\beta t+dz), 
$$
 uniformly in $x\geq0$ and $\delta\in[\delta_0,\infty]$, where $V$ denotes the renewal function. 
 Using a fluctuation-theoretic decomposition of the killed semigroup, we further derive corresponding uniform local large-deviation asymptotics for the L\'evy process killed upon its first passage below zero. %Our results provide a unified one-big-jump description of local excursion asymptotics across both large-space and large-time regimes. 
 
 	\bigskip
 	
 	\noindent {\it MSC 2020 subject classifications:} Primary 60G51, 60F17; secondary 60B10.
 	
 	\smallskip
 	
 	\noindent  {\it Keywords and phrases:} Entrance law, excursion measure, L\'evy process, reflected process.

 \end{abstract} 
 
% \tableofcontents

  \section{Introduction and main results} 
 \label{Sec.Introduction}
 \setcounter{equation}{0}

 Consider a one-dimensional  L\'evy process $X=\{ X_t:t\geq 0 \}$ with negative drift $ \beta:=-\mathbf{E}\big[X_1\big]\in (0,\infty)$, diffusion coefficient $\sigma\geq 0$ and L\'evy measure $\nu(dy)$ satisfying
 \beqlb\label{eqn.LevyMeasure}
 \int_{\mathbb{R}} \big( |y|\wedge |y|^2\big)\,\nu(dy)<\infty. 
 \eeqlb 
 For $x\in \mathbb{R}$, we write $\tau_x^-$ and $\tau_x^+$ for the first passage times of $X$ into $(-\infty,x)$ and $(x,\infty)$, respectively. 
 Let $\underline{X}$ and $ \overline{X}$ be the \textit{running infimum} and  \textit{supremum processes} by
 \beqnn
 \underline{X}_t:= \inf\big\{X_s: 0\leq s\leq t\big\}
 \quad \mbox{and}\quad 
 \overline{X}_t:= \sup\big\{X_s: 0\leq s\leq t\big\},\quad t\geq0. 
 \eeqnn  
 The reflected processes $X-\underline{X}$ and $\overline{X}-X$ are  non-negative strong Markov processes.
 Their excursions away from $0$ form two Poisson point processes on the space $\mathcal{E}$ of excursions, with respective intensities $\underline{n}(d\epsilon)$ and $\overline{n}(d\epsilon)$.  
 Here, each excursion $\epsilon \in \mathcal{E}$ is a non-negative c\`adl\`ag function on $\mathbb{R}_+$ with \textit{lifetime} 
 \beqnn
 \zeta=\inf\{t\geq0: \epsilon_s=0, \forall s\geq t \},\quad \inf \emptyset =+\infty. 
 \eeqnn  

 In our earlier work \cite{CuiWuXu2025}, we established the asymptotic behavior of the excursion-lifetime tail $\underline{n}(\zeta>t)$ under a suitable heavy-tail assumption. We also showed that long-lived excursions converge, under spatial rescaling, to a single-jump process and, under a suitable time rescaling, to a linearly decreasing path killed upon hitting zero. 
 These results, however, do not describe the local behavior of long-lived excursions such as their entrance laws
 \beqlb
  \underline{n}_t(dx):= \underline{n}\big( \epsilon_t\in dx,\,\zeta>t \big),\quad t>0,\, x>0.
 \eeqlb

 The aim of this work is to provide a unified treatment of the uniform local large-deviation asymptotics for $\underline{n}_t(dx)$: for $x>0$ and $\delta\in(0,\infty]$, 
 \beqlb\label{eqn.003}
 \underline{n}_t\big(x+\varDelta_\delta\big) =\underline{n}\big( \epsilon_t\in x+\varDelta_\delta,\,\zeta>t \big)
 \quad \mbox{with}\quad  x+ \varDelta_\delta:= (x,x+\delta] . 
 \eeqlb
 These asymptotics play a central role in the study of both the excursion measure $\underline{n}(d\epsilon)$ and the process $X$. For instance, the canonical process under $\underline{n}(d\epsilon)$ has the transition semigroup of $X$ killed upon its first passage below zero, i.e.,  for any $0<t<r<\infty$,
 \beqnn
 \underline{n}\big( \epsilon_t\in dx,\, \epsilon_r\in dy,\, \zeta>r \big)
 = \underline{n}_t(dx)\, \mathbf{P}_x\big( X_{r-t}\in dy,\, \tau_0^->r-t \big).
 \eeqnn
 Furthermore,  
 Theorem~5 of \cite{Chaumont2013} gives, for each $y>0$, the following identity of measures on $u>0$: \smallskip
 \beqlb\label{eqn2.9}
 \lefteqn{\mathbf P_y\big( X_t\in du,\, \tau_0^->t \big)}\cr
 \ar=\ar \overline{\tt d}\cdot \underline n_t(du-y)\cdot
 \mathbf 1_{\{u\geq y\}} +\underline{\tt d}\cdot \overline{n}_t(y-du)\cdot
 \mathbf{1}_{\{u\leq y\}}  + \int_0^t ds \int_{(y-u)^+}^y  \underline n_{t-s}(du+z-y) \, \overline n_s(dz) .
 \eeqlb 
 
 To the best of our knowledge, the only previous study of local large-deviation asymptotics for $\underline{n}_t(dx)$ is that of Doney and Rivero \cite{DoneyRivero2013,DoneyRivero2016Erratum}. They considered the case in which $X$ belongs to the domain of attraction of a stable law without centering, with norming function $c$. In particular, for any $0<\delta_0<\delta_1$, they showed that, as $t\to\infty$, uniformly over $\delta\in[\delta_0,\delta_1]$ and $x> 0$ with $x/c(t)\to0$,
 \beqnn 
  \underline{n}_t\big(x+\varDelta_\delta\big)
  \sim
  \frac{f(0)}{ t\cdot c(t)}\int_x^{x+\delta}V(z)\,dz,
 \eeqnn
 where $f$ is the density of the limiting stable law and  $V$ is the \textsl{renewal function} of the ascending ladder height process of $X$.
 For a nonlattice random walk $S=\{S_n:n\geq 0 \}$ in the domain of attraction of a stable law without centering. %let $T_0^-:=\inf\{n\geq1:S_n\leq0\}$.
 Doney \cite{Doney2012} provided a related conditional local limit theorem that states, for each fixed $\delta>0$, as $n\to\infty$,  
 \beqnn
 c_n\cdot \mathbf P\big(S_n\in x+\varDelta_\delta\,\big|\,\tau^-_0>n\big)-\delta\,g\big(x/c_n\big) \to 0, 
 \eeqnn uniformly over $x>0$, where $g$ is the density of the stable meander at time one. 
	When $\mathbf{E}[S_1]=-a<0$, Iglehart \cite{Iglehart1974} established, under an exponential moment condition, convergence in distribution of $S_n$ conditional on $\tau_0^->n$. Keener \cite{Keener1992} subsequently identified the limit under related assumptions. 
	Under suitable moment and regular variation assumptions on the density $b$ of $S_1$, Denisov et al.~\cite{DenisovVatutinWachtel2014} proved that, for each fixed $\delta>0$ and every sequence $x_n\to\infty$, as $n\to\infty$, uniformly over $x\geq x_n$, 
	\beqnn
	\frac{ \mathbf{P}\big( S_n\in x+\varDelta_\delta,\, \tau_0^->n \big)}
	{b(an+x)}
	\to \delta\cdot  \mathbf{E}\big[\tau_0^-\big].
	\eeqnn

  \subsection{Main results}
  
  We now formulate our results on the uniform asymptotic behaviors of  $\underline{n}_t\big(x+\varDelta_\delta\big) $  for fixed $t$ or as $t\to\infty$.  
  Throughout, we write $f\sim g$ whenever $f(x)/g(x)\to1$ as $x\to\infty$.
  For $\kappa\in\mathbb{R}$, a function $f$ is said to be regularly varying at infinity with index $\kappa$, written $f\in\mathrm{RV}^\infty_\kappa$, if  $f(\lambda x)/f(x)\to \lambda^\kappa$ as $x\to\infty$ for every $\lambda>0$.

  If the state $0$ is irregular for $(0,\infty)$, for $X$, then $\underline{n}(\epsilon_0>0)>0$,  and under the conditional measure $\underline n(\,\cdot\,|\,\epsilon_0>0)$, the canonical process has the same law as $X$ started from an initial position with distribution $\underline{n}(\epsilon_0\in dx\,|\,\epsilon_0>0)$. 
  Consequently, the asymptotic behavior of $\underline{n}_t(dx, \, \epsilon_0>0)$ can be derived directly from the corresponding asymptotics for the L\'evy process started from a random positive initial state; see \cite{CuiWuXu2026c}.
  In the present paper, we therefore restrict attention to the complementary case and impose the standing assumption that
  \begin{center}
  \bf the state $0$ is regular for $(0,\infty)$, for $X$.
  \end{center}

 Recall that, for $x>0$ and $\delta\in(0,\infty]$, we write $ x+ \varDelta_\delta= (x,x+\delta]$ with the convention $x+ \varDelta_\infty= (x,\infty)$. In particular, $\varDelta_\delta=(0,\delta]$. 
 For $x>0$, let $\overline{\nu}(x):= \nu\big((x,\infty)\big)$ be the tail-distribution of L\'evy measure $\nu(dy)$. 
 All our results are established under the following local regular-variation assumption on the L\'evy measure $\nu(dy)$. 
 
  \begin{condition}\label{Main.Condition}
  	For some $\alpha>1$, assume that $x\mapsto\nu\big(x+\varDelta_\delta\big)
  \in \mathrm{RV}^\infty_{-\alpha-1}$ for any 
  $\delta>0$. 
  \end{condition}
 
  \begin{remark} \label{Remark01}
  	By Proposition~3.3 in \cite{WuXu2026a}, Condition~\ref{Main.Condition} is equivalent to the following: for every $\delta>0$,
  	\beqlb\label{eqn.30}
  	\overline{\nu}\in\mathrm{RV}_{-\alpha}^{\infty}
  	\quad \mbox{and} \quad
  	\nu\big(x+\varDelta_\delta\big)
  	\sim
  	\alpha\cdot\frac{\overline{\nu}(x)}{x}\cdot\delta,
  	\quad \mbox{as } x\to\infty.
  	\eeqlb
 \end{remark}
 
 The regular variation of $\overline\nu$ describes the frequency of jumps exceeding a large level $x$, while the second relation in \eqref{eqn.30} determines how the L\'evy mass is distributed over bounded intervals near $x$. In particular, $\nu(x+\varDelta_\delta)/\delta \sim \alpha \cdot \overline\nu(x)/x$ for every fixed $\delta>0$,  so that the L\'evy measure is asymptotically locally smooth in its positive tail. This local information is essential in our setting, since the asymptotics of  $\underline{n}_t\big(x+\varDelta_\delta\big) $ depend on the probability of a large jump landing in a prescribed spatial window, rather than merely on the probability of exceeding $x$.
 
 Our first result provides  uniform local large-deviation asymptotics for
 $\underline{n}_t\big(x+\varDelta_\delta\big) $ at any fixed time $t$. 
 It states that an excursion observed at a very large level at a fixed time $t$ is  produced by a unique exceptionally large positive jump occurring while the excursion is alive. The local position of the excursion is determined by the Lévy measure of this jump, while the factor $\underline n(\zeta\wedge t)$ accounts for all possible times before $t$ at which the jump may occur.
  
   \begin{theorem}\label{MainThm00}
  	For any $t,\delta_0>0$, we have as $x\to\infty$, uniformly in $\delta\in [\delta_0,\infty]$, 
  	\beqlb\label{eqn.MainThm00}
  	\underline{n}_t\big(x+\varDelta_\delta\big) 
  	\sim \underline{n}(\zeta \wedge t) \cdot \nu\big( x+ \varDelta_\delta \big) . 
  	\eeqlb
  \end{theorem}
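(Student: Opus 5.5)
The plan is to use the one-big-jump heuristic, made rigorous through the compensation formula applied under the excursion measure $\underline n$. Fix $t>0$ and a threshold level, say $\theta x$ with $\theta\in(0,1)$ close to $1$ (or $x-\varepsilon x$). We split the event $\{\epsilon_t\in x+\varDelta_\delta,\ \zeta>t\}$ according to whether the excursion made at least one jump of size exceeding $\theta x$ before time $t$. On the complementary event, all jumps on $[0,t]$ are at most $\theta x$. Since the excursion must climb from near $0$ to above $x$ within fixed time $t$, a truncated-jump large-deviation estimate applies. Concretely, the canonical process under $\underline n$ after an initial time $s$ is $X$ killed below zero, started from $\underline n_s$, and $\underline n(\zeta>s)<\infty$. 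This estimate (Fuk--Nagaev/Kesten type, using $\overline\nu\in\mathrm{RV}^\infty_{-\alpha}$) shows that the no-big-jump contribution is $O(\overline\nu(x)^{2})$ or $o(\overline\nu(x)/x)$. That is negligible relative to $\nu(x+\varDelta_{\delta_0})\asymp \overline\nu(x)/x$ by Remark~\ref{Remark01}. Two or more jumps above $\theta' x$ give a contribution of order $\overline\nu(\theta' x)^2$, which is also negligible because $\alpha>1$.

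For the main term with exactly one big jump at time $s\in(0,t)$, the compensation formula under $\underline n$ gives
\[
\int_0^t ds\int \underline n\big(\epsilon_{s-}\in dy,\ \zeta>s\big)\int_{(\theta x,\infty)}\nu(dz)\,\mathbf P_{y+z}\big(X_{t-s}\in x+\varDelta_\delta,\ \tau_0^->t-s\big).
\]
Here $\underline n(\epsilon_{s-}\in dy,\zeta>s)=\underline n_s(dy)$ for a.e.\ $s$. Since $y+z$ is huge, the killing is irrelevant with probability tending to one. The inner term is therefore approximately $\int \nu(dz)\,\mathbf P(X_{t-s}+y+z\in x+\varDelta_\delta)$, which equals $(\nu * \mathbf P(X_{t-s}+y\in\cdot))(x+\varDelta_\delta)$. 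We then need a local subexponential-type lemma: for a tight family of random shifts $W$ (here $X_{t-s}+y$, tight uniformly in $s\le t$ once we restrict $y\le M$), we have $\mathbf E[\nu(x-W+\varDelta_\delta)]\sim\nu(x+\varDelta_\delta)$ uniformly in $\delta\ge\delta_0$. This lemma follows from local regular variation (Condition~\ref{Main.Condition}), uniform in shifts in compacts, together with dominated convergence via Potter-type bounds for the part where $W$ is large. Integrating then yields $\int_0^t \underline n_s\big((0,\infty)\big)\,ds=\int_0^t\underline n(\zeta>s)\,ds=\underline n(\zeta\wedge t)$ times $\nu(x+\varDelta_\delta)$. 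Finiteness of $\underline n(\zeta\wedge t)$ holds because $\underline n(\zeta>s)$ is integrable near $0$, as $\int_0^1\underline n(\zeta>s)\,ds$ relates to the drift of the inverse local time.

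Uniformity in $\delta\in[\delta_0,\infty]$ will be handled by treating $\delta=\infty$ separately: the tail version uses only $\overline\nu\in\mathrm{RV}_{-\alpha}$ and is standard. For large finite $\delta$, we would write $x+\varDelta_\delta=(x,\infty)\setminus(x+\delta,\infty)$ and use monotonicity, or cover the interval by boundedly many windows of length in $[\delta_0,2\delta_0]$ plus a tail piece. The main obstacle is the joint control of the one-jump term when the pre-jump position $y$ or the post-jump displacement $X_{t-s}$ is large, and near $s\downarrow0$ where $\underline n_s$ has infinite total mass as $s\to 0$ only through $\underline n(\zeta>s)$. The first thing I would try is to bound these cases by $\underline n_s(y>M)\cdot\sup_w\nu(w+\varDelta_{\delta})$, since $\sup_{w\ge \theta x/2}\nu(w+\varDelta_\delta)\lesssim \delta\,\overline\nu(x)/x$. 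Combined with $\int_0^t\underline n_s(y>M)\,ds\to0$ as $M\to\infty$, this should make these contributions negligible.
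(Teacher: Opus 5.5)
\textbf{The gap is in the no-big-jump term.} You need $\underline n(\epsilon_t\in x+\varDelta_\delta,\ \mathcal J^{\theta x}>t,\ \zeta>t)=o(\nu(x+\varDelta_\delta))$ under the \emph{infinite} measure $\underline n$. The mechanism you describe does not give it: Markov property at a fixed time $s$, then a truncated large-deviation bound for $X$ started from $\underline n_s$.

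If you drop the constraint on $[0,s]$, the bound $\int\underline n_s(dy)\,\mathbf P_y(X_{t-s}\in x+\varDelta_\delta,\ \mathcal J^{\theta x}>t-s,\ \tau_0^->t-s)$ still contains every excursion whose big jump occurred before time $s$ and brought it above $(1-\theta)x$. From there no further jump above $\theta x$ is needed, so this piece is of the same order $\nu(x+\varDelta_\delta)$ as the main term. If you keep the constraint, you must show that excursions with no jump above $\theta x$ on $[0,s]$ carry negligible mass near level $x$ at time $s$. That is the original problem again.

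Separately, with $\theta$ close to $1$, a Fuk--Nagaev bound at the single truncation level $\theta x$ only yields decay of order about $x^{-(\alpha\wedge 2)/\theta}$. For $\theta$ near $1$ this is not $o(x^{-\alpha-1})$, which is why Proposition~\ref{Prop.401} uses a small truncation level. The heuristic $O(\overline\nu(x)^2)$ would need a multi-level argument that you do not supply.

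\textbf{The missing idea} is to stop at a first-passage time rather than a fixed time. Take $\eta\in(0,1-\theta)$. On the no-big-jump event one has $\tau^+_{\eta x}\le t$ and $\epsilon_{\tau^+_{\eta x}}\in[\eta x,(\eta+\theta)x]$. The strong Markov property then bounds the term by $\underline n(\overline\epsilon>\eta x)\cdot\sup_{r\le t}\sup_{y\ge(1-\eta-\theta)x}\mathbf P(X_r\in y+\varDelta_\delta)$. The supremum is at most a constant times $\nu(x+\varDelta_\delta)$, by the untruncated local estimate of Lemma~\ref{Lemma.203} and regular variation. Meanwhile $\underline n(\overline\epsilon>\eta x)\to0$. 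No truncated large-deviation estimate is needed, and every $\theta\in(0,1)$ works.

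The rest of your plan is sound and matches the paper:
\begin{itemize}
\item the compensation-formula treatment of the one-big-jump term;
\item domination by $\sup_{u\ge\theta x}\nu(u+\varDelta_\delta)\le C\,\nu(x+\varDelta_\delta)$, available precisely because of the restriction $w>\theta x$;
\item pointwise convergence plus dominated convergence against $\int_0^t\underline n_s(dz)\,ds=\underline n(\zeta\wedge t)$;
\item the two-jump bound $\overline\nu(\theta x)^2=o(\overline\nu(x)/x)$ from Proposition~\ref{Prop.203}.
\end{itemize}

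\textbf{Your uniformity step needs repair.} For $\delta_0\ll\delta\ll x$, the complement $(x,\infty)\setminus(x+\delta,\infty)$ fails: the tail errors are $o(\overline\nu(x))$, whereas $\nu(x+\varDelta_\delta)\asymp\delta\,\overline\nu(x)/x$. Boundedly many windows cannot cover such $\delta$ either. Use instead unboundedly many windows of fixed mesh $1/n$, as the paper does. The fixed-window asymptotics are automatically uniform in the location $u\ge x$. The single boundary window contributes at most $(1+o(1))/(n\delta_0)$ relative to $\nu(x+\varDelta_\delta)$, uniformly in $\delta\in[\delta_0,\infty]$.
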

  
  Since $0$ is regular for $(0,\infty)$, for $X$, we have $\underline{\tt d}=0$. 
   For any $t,\delta,x,y>0$, integrating both sides of \eqref{eqn2.9} with respect to $u$ over $x+\varDelta_\delta$ and then applying Fubini's theorem yields that 
  \beqlb\label{eqn.30111}
  \mathbf P_y\big(
  X_t\in x+\varDelta_\delta,\,
  \tau_0^->t
  \big)
  \ar=\ar
  \overline{\tt d}\cdot
  \underline n\Big(
  \epsilon_t\in
  \big((x-y)^+,(x-y+\delta)^+\big],\,
  \zeta>t
  \Big)
  \cr\cr
  \ar\ar \quad
  +\int_0^t ds\int_0^y
  \underline n_{t-s}\Big(
  \big((x-y+z)^+,(x-y+z+\delta)^+\big]
  \Big)\,
  \overline n_s(dz). \quad
  \eeqlb 
  This decomposition allows us to apply the preceding theorem to derive the corresponding local asymptotic for the L\'evy process killed upon entering the negative half-line.
   
  \begin{corollary}\label{Main.Corollary01}
  	For any fixed $y>0$ and $t,\delta_0>0$, we have as $x\to\infty$, uniformly in
  	$\delta\in[\delta_0,\infty]$, 
  	\beqnn
  	\mathbf{P}_y\big( X_t\in x+\varDelta_\delta,\tau_0^->t \big)
  	\sim
  	\mathbf{E}_y\big[ t\wedge\tau_0^- \big]\cdot \nu\big(x+\varDelta_\delta\big).
  	\eeqnn
  \end{corollary}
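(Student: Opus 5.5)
We start from the decomposition \eqref{eqn.30111} and apply Theorem~\ref{MainThm00} to each entrance-law term appearing in it. For fixed $y$ and $x\to\infty$, the windows $\big((x-y+z)^+,(x-y+z+\delta)^+\big]$ with $z\in[0,y]$ are shifts $x'+\varDelta_\delta$ with $x'=x-y+z\in[x-y,x]$. Condition~\ref{Main.Condition}, in the form \eqref{eqn.30}, gives $\nu(x'+\varDelta_\delta)\sim\nu(x+\varDelta_\delta)$ uniformly over $x'\in[x-y,x]$ and $\delta\in[\delta_0,\infty]$. For bounded $\delta$ both sides are asymptotic to $\alpha\delta\overline\nu(x)/x$. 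For large or infinite $\delta$ we use monotonicity and the regular variation of $\overline\nu$; to make this uniform in $\delta$ we split at a large $\delta_1$ and use that $\nu(x+\varDelta_\delta)$ is comparable to $\overline\nu(x)-\overline\nu(x+\delta)$.

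\textbf{Step 1: the first term.} Theorem~\ref{MainThm00} gives
\[
\overline{\tt d}\,\underline n_t\big((x-y)+\varDelta_\delta\big)\sim \overline{\tt d}\,\underline n(\zeta\wedge t)\,\nu(x+\varDelta_\delta).
\]

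\textbf{Step 2: the integral term.} We want
\[
\int_0^t ds\int_0^y \underline n_{t-s}\big((x-y+z)+\varDelta_\delta\big)\,\overline n_s(dz)\sim \nu(x+\varDelta_\delta)\int_0^t \underline n(\zeta\wedge(t-s))\,\overline n_s\big((0,y]\big)\,ds,
\]
and we obtain it by dominated convergence after dividing by $\nu(x+\varDelta_\delta)$. Pointwise convergence in $(s,z)$ comes from Theorem~\ref{MainThm00} together with the uniform comparison above.

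\textbf{Step 3: identify the constant.} We show
\[
\overline{\tt d}\,\underline n(\zeta\wedge t)+\int_0^t \underline n(\zeta\wedge(t-s))\,\overline n_s\big((0,y]\big)\,ds=\mathbf E_y[t\wedge\tau_0^-].
\]
To do this, integrate \eqref{eqn.30111} with $\delta=\infty$ and $x\downarrow 0$ over $t'\in[0,t]$. Using $\int_0^t \mathbf P_y(\tau_0^->r)\,dr=\mathbf E_y[t\wedge\tau_0^-]$ and $\int_0^t \underline n(\zeta>r)\,dr=\underline n(\zeta\wedge t)$, Fubini (convolution in time) gives exactly this identity. The reason is that $\underline n_{t-s}((x-y+z)^+,\infty)\to \underline n(\zeta>t-s)$ as $x\to 0$, since $z\le y$, and similarly for the first term.

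\textbf{Main obstacle: the domination in Step 2.} The time $t-s$ runs over $(0,t]$, so we need a bound
\[
\underline n_r(x'+\varDelta_\delta)\le C\,\underline n(\zeta\wedge r)\,\nu(x+\varDelta_\delta),\qquad r\in(0,t],\ x'\ge x-y,
\]
with $C$ uniform in $r$ and $\delta$. Theorem~\ref{MainThm00} is stated only for fixed $t$. The plan is first to argue that its proof yields uniformity over $r\in(0,t]$. If that fails, we use the Markov property at time $r$ under $\underline n$: $\underline n_t(A)\ge\int\underline n_r(du)\,\mathbf P_u(X_{t-r}\in A,\tau_0^->t-r)$, which compares small $r$ with $t$. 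Alternatively, we bound $\underline n_r(x'+\varDelta_\delta)\le \underline n(\zeta>r)$ times a one-big-jump estimate from the proof of Theorem~\ref{MainThm00}. The needed integrability then follows from $\int_0^t\underline n(\zeta\wedge(t-s))\,\overline n_s(0,y]\,ds<\infty$, which holds by Step 3. If only a bound by $\underline n(\zeta>r)$ times a constant is available near $r=0$, we handle $s\in(t-\eta,t]$ separately. On that set the contribution is at most
\[
\overline n_s(0,y]\cdot\sup\underline n_{r}(\cdot)\approx\eta\cdot O\big(\nu(x+\varDelta_\delta)\big),
\]
so it vanishes as $\eta\to0$.
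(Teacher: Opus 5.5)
Your outline follows the paper's proof. It uses the decomposition \eqref{eqn.30111} and the Fubini identity $\mathbf{E}_y[t\wedge\tau_0^-]=\overline{\tt d}\,\underline{n}(\zeta\wedge t)+\int_0^t ds\int_0^y \underline{n}(\zeta\wedge(t-s))\,\overline{n}_s(dz)$; the paper obtains this identity by integrating \eqref{eqn2.9} in $u$, which is your Step~3. It then gets pointwise convergence from Theorem~\ref{MainThm00} plus the shift comparison, and concludes by dominated convergence.

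The step to correct is the domination, which you leave open.

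\textbf{The bound you target is more than the argument gives.} Consider the factor $\underline{n}(\zeta\wedge r)$. The compensation-formula piece is indeed at most $C\,\nu(x+\varDelta_\delta)\int_0^r \underline{n}(\zeta>s)\,ds$. The no-big-jump piece, however, is bounded in Proposition~\ref{Prop.501} only by $C\,\nu(x+\varDelta_\delta)\,\underline{n}(\tau^+_{\eta x'}\le r)$. Comparing that with $\underline{n}(\zeta\wedge r)$ as $r\downarrow 0$ would need a separate argument.

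\textbf{It is also unnecessary.} The paper uses the $r$-free bound $\sup_{r\in(0,t]}\sup_{z\in[0,y]}\underline{n}_r\big(x-y+z+\varDelta_\delta\big)\le C\,\nu(x+\varDelta_\delta)$ for all large $x$. This comes from the same two pieces: \eqref{eqn.4020} for the big-jump part, and $\underline{n}(\overline\epsilon\geq \eta x/2)\to 0$ for the other. Both estimates are uniform in time. A constant dominating function suffices because $ds\,\overline{n}_s(dz)$ has finite mass on $[0,t]\times[0,y]$: by \eqref{eqn.447}, $\int_0^t \overline{n}_s([0,y])\,ds\le \widehat{V}(y)<\infty$.

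\textbf{Your fallbacks do not supply this bound.}
\begin{itemize}
\item The Markov relation expresses $\underline{n}_t$ through $\underline{n}_r$. It gives no upper bound on $\underline{n}_r$ for small $r$.
\item The separate treatment of $s\in(t-\eta,t]$ presupposes $\sup_{r<\eta}\underline{n}_r(\cdot)=O(\nu(x+\varDelta_\delta))$, which is exactly the bound in question.
\end{itemize}

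On uniformity in $\delta$ you take a slightly different route. The paper first proves the statement for each fixed $\delta$. It then discretizes into windows of length $1/n$, using monotonicity of $\delta\mapsto\nu(x+\varDelta_\delta)$ and \eqref{eqn.30}, and treats $\delta=\infty$ separately.

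Your direct approach can work. The large-$\delta_1$ split is sound: $|\nu(x'+\varDelta_\delta)-\nu(x+\varDelta_\delta)|=O(y\,\overline{\nu}(x)/x)$ uniformly in $\delta$, while $\nu(x+\varDelta_\delta)\ge \nu(x+\varDelta_{\delta_1})$. But dominated convergence yields uniformity in $\delta$ only if you apply it along arbitrary sequences $(x_n,\delta_n)$ with $x_n\to\infty$. You must also check that the dominating constant above is uniform in $\delta\in[\delta_0,\infty]$; it is, by Lemma~\ref{Lemma.203} and \eqref{eqn.30}. The paper's discretization spares you that verification.
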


  Our second main theorem provides a unified description of the uniform local large-deviation asymptotics of the entrance law in the large-time regime. 
  It shows that, at large times, the dominant contribution comes from a single large positive jump whose size is sufficient to compensate for the negative drift accumulated over a time interval of order $t$. 
  The result is uniform over the entire range of terminal spatial levels and over all intervals whose lengths are bounded away from zero. 
  Recall that $V$ denotes the renewal function of the ascending ladder height process of $X$.
 
 \begin{theorem}\label{MainThm01}
 	For any $\delta_0>0$, we have as $t\to\infty$, uniformly in $\delta\in [\delta_0,\infty]$ and $x\geq 0$,
 	\beqlb\label{eqn.MainThm01}
 	\underline{n}_t\big(x+\varDelta_\delta\big) 
 	\sim
  \alpha \int_{x}^{x+\delta} \frac{\overline{\nu}(\beta t+z)}{\beta t+z} \cdot V(z)\, dz
  \sim \int_x^{x+\delta} V(z) \,\nu(\beta t+dz). 
 	\eeqlb
 	\end{theorem}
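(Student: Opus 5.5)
We plan to prove Theorem~\ref{MainThm01} by a one-big-jump decomposition of the excursion under $\underline n$, paired with the fluctuation identity \eqref{eqn2.9} and the duality between the killed semigroup and the renewal function $V$. The second equivalence in \eqref{eqn.MainThm01} is elementary. By Remark~\ref{Remark01}, $\nu(\beta t+z+\varDelta_h)\sim \alpha h\,\overline\nu(\beta t+z)/(\beta t+z)$ uniformly in $z\ge 0$ as $t\to\infty$, because $\beta t+z\ge \beta t\to\infty$. Since $V$ is nondecreasing and $V(z)\sim z/\mathbf E[H_1]$ is regularly varying, a Riemann-sum argument over windows of length $h$, followed by $h\to0$, identifies $\int_x^{x+\delta}V(z)\,\nu(\beta t+dz)$ with the middle expression. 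This holds uniformly in $x\ge0$ and $\delta\ge\delta_0$; for $\delta=\infty$ the integral is finite because $\alpha>1$. We may therefore concentrate on the first equivalence.

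For the upper and lower bounds, fix small $\varepsilon>0$ and split the event $\{\epsilon_t\in x+\varDelta_\delta,\zeta>t\}$ according to whether the excursion makes a jump larger than $\varepsilon t$ before time $t$. We handle three cases.

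\emph{(i) No jump exceeds $\varepsilon t$.} Since the drift is $-\beta$, survival to time $t$ then requires a moderate-deviation event for a process with truncated jumps. A Fuk--Nagaev type bound, combined with the known tail $\underline n(\zeta>t)\asymp \overline\nu(\beta t)$ from \cite{CuiWuXu2025}, should show that this contribution is $o(t\,\overline\nu(\beta t)/t)$, uniformly in $x$. Getting this negligibility locally, i.e.\ relative to the target order $\delta_0\,\overline\nu(\beta t)/t$ rather than just $\overline\nu(\beta t)$, is the delicate point. We handle it by using the Markov property at time $t-1$ together with the fixed-time local estimate Theorem~\ref{MainThm00} and Corollary~\ref{Main.Corollary01}. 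These give a local bound of order $\nu$-density times the mass, and the mass is itself small.

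\emph{(ii) Two or more jumps exceed $\varepsilon t$.} This case has order $(t\overline\nu(\varepsilon t))^2\cdot t^{-1}$, which is negligible because $\alpha>1$.

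\emph{(iii) Exactly one big jump, at time $s$.} This is the main term. By the compensation formula under $\underline n$, it equals
\[
\int_0^t ds\int \underline n_s(dw)\int \nu(dj)\,\mathbf P_{w+j}\big(X_{t-s}\in x+\varDelta_\delta,\ \tau_0^->t-s\big).
\]
The event requires $\epsilon_s=w$ to be small and the process to survive afterwards. Write $u=t-s$. After the jump, the process starting at $w+j$ drifts linearly at rate $-\beta$ and lands in $x+\varDelta_\delta$, so $j\approx \beta u + x - w$. The dominant configurations turn out to be those with $s$ small: the pre-jump excursion has $\underline n$-mass integrating to a constant, namely $\int_0^\infty \underline n_s(dw)\,ds$. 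The local landing probability then yields the factor $\nu(\beta t+dz)$. To see where $V(z)$ comes from, we reverse time. Using the duality $\underline n_t(dz)$ versus $\mathbf P$ of the dual process killed at zero, the renewal measure of the pre-jump piece combines with the post-jump piece to produce $\int_0^\infty\!\int \underline n_s(dw)\,ds\,\mathbf P_{\cdot}(\dots)$, which equals $V(z)\,dz$ in the limit. An alternative route is the standard identity $\int_0^\infty \mathbf P_y(X_s\in dz,\tau_0^->s)\,ds = \int \widehat V(dv)\,V(\dots)$. Concretely, we would first prove the estimate for $\delta=h$ small and $x$ in compact sets relative to $t$. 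In that regime the post-jump path is a ``linear descent plus CLT-scale fluctuations,'' and the local landing probability equals $\nu$ evaluated at the needed height, integrated over the law of the \emph{reversed} path, which has total mass $V(z)$ for the survival constraint on $[s,t]$ seen backwards from the endpoint $z$. We then sum over windows.

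The main obstacle is uniformity in $x\ge0$. For $x\gg t$ the answer reduces to the fixed-time-type picture $\approx t\cdot V$-weighting, which is consistent because $V(z)\approx z$ is large. Here we would use the Lipschitz-type local regular variation of $\nu$ to transport the window, together with Potter bounds to dominate the integrals. We expect the hardest step to be the local (window-size-$\delta_0$) negligibility in case (i) near $x\approx0$, and we would attack it first via the Markov property at time $t-1$ and Theorem~\ref{MainThm00}-type uniform local bounds.
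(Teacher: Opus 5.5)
\textbf{Gap 1: case (ii) and late big jumps.} As written, case (ii) fails, and with it any localized control of big jumps that occur late in the excursion. Your order $(t\overline\nu(\varepsilon t))^2t^{-1}=t\,\overline\nu(\varepsilon t)^2$ is just the crude bound of Proposition~\ref{Prop.203}. Compare it with the target $\overline\nu(\beta t+x)/(\beta t+x)$:
\begin{itemize}
\item For bounded $x$ the ratio is of order $t^2\overline\nu(t)\in\mathrm{RV}^\infty_{2-\alpha}$, which diverges for $\alpha\in(1,2)$.
\item For $x\ge t^2$, say, it diverges for every $\alpha>1$. A threshold $\varepsilon t$ that does not scale with $x$, and a jump count that ignores the landing window, cannot see $x+\varDelta_\delta$.
\end{itemize}
The missing idea is to use the threshold $\eta(\beta t+x)$ and to split according to whether the \emph{first} such jump $\mathcal J^{\eta(\beta t+x)}$ occurs before or after a \emph{fixed} horizon $T$.
\begin{itemize}
\item Two such jumps inside $[0,T]$ cost only $T\,\underline n(\zeta)\,\overline\nu(\eta(\beta t+x))^2=o\big(\overline\nu(\beta t+x)/(\beta t+x)\big)$. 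This is where $\alpha>1$ is used.
\item A first big jump in $(T,t]$, with any number of later ones, is bounded via the compensation formula and Proposition~\ref{Prop.403}. That proposition gives $\int_{B}\mathbf P(\xi\in -w+\varDelta_\delta)\,\nu(dw)\le C\delta\,\overline\nu(x)/x$ for an \emph{arbitrary} random variable $\xi$. The result is a bound $C\delta\,\frac{\overline\nu(\beta t+x)}{\beta t+x}\int_T^\infty\underline n(\zeta>s)\,ds$, which is small as $T\to\infty$.
\end{itemize}
This is also what would justify your unproved claim that ``the dominant configurations are those with $s$ small''.

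Conversely, case (i) is not the delicate point. With truncation at $\eta(\beta t+x)$, Fuk--Nagaev gives a tail bound $C(\beta t+x)^{-(\kappa-1)/(4\eta)}$ with exponent exceeding $\alpha+1$ (Proposition~\ref{Prop.401}), so no local argument is needed there. Your route through Theorem~\ref{MainThm00} and Corollary~\ref{Main.Corollary01} at time $t-1$ would not help anyway: those are asymptotics for a fixed starting point and give nothing uniform over the law of $\epsilon_{t-1}$.

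\textbf{Gap 2: the constant $V(z)$.} Your identification of the constant rests on a false premise. Since $X$ drifts to $-\infty$, the ascending ladder height process is killed and $V$ is bounded, with $V(\infty)=\underline n(\zeta)<\infty$ (see \eqref{eqn.2011}). So $V(z)\sim z/\mathbf E[H_1]$ is wrong. Your consistency check for $x\gg t$ is wrong too: there the right side of \eqref{eqn.MainThm01} is $\approx\underline n(\zeta)\,\nu(x+\varDelta_\delta)$, matching Theorem~\ref{MainThm00} with $\underline n(\zeta\wedge t)\to\underline n(\zeta)$, and there is no ``$t\cdot V$'' weighting. Your Riemann-sum argument for the second equivalence survives, since it only needs $V$ monotone and bounded.

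The factor $V(z)$ is the product of two pieces, integrated over $w\in x+\varDelta_\delta$:
\begin{itemize}
\item the pre-jump mass $\int_0^T\underline n(\zeta>s)\,ds\to\underline n(\zeta)$;
\item the post-jump survival term $\mathbf P(X_{t-s}-\underline X_{t-s}\le w)=\mathbf P(\overline X_{t-s}\le w)\to\mathbf P(\overline X_\infty\le w)=V(w)/\underline n(\zeta)$, by the duality lemma.
\end{itemize}
Proving this uniformly in $x\ge0$ needs concrete steps that your case (iii) replaces with heuristics:
\begin{enumerate}
\item Discard jump sizes outside $\big((1-\kappa)(\beta t+x),(1+\kappa)(\beta t+x)\big]$, using the law of large numbers and Proposition~\ref{Prop.403}.
\item Cut the remaining range into windows of width $\kappa\delta$. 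On each window, local regular variation makes $\nu$ equal to $\alpha\kappa\delta\,\overline\nu(\beta t+x)/(\beta t+x)$ up to factors tending to $(1\pm\kappa)^{-\alpha-1}$.
\item Use a counting argument to convert $\kappa\delta\sum_j\mathbf P(X_{t-s}\in A_j,\ \underline X_{t-s}\ge\cdots)$ into $\int_x^{x+\delta}\mathbf P(X_{t-s}-\underline X_{t-s}\le w)\,dw$ up to $O(\kappa\delta)$, uniformly in $x$.
\end{enumerate}
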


 In particular, given any positive, non-decreasing function  $\theta$ on $\mathbb R_+$ such that $\theta_t/t\to 0$ as $t\to\infty$, the regular-variation of $\overline\nu$ yields that
 \beqnn
 \underline{n}_t\big(x+\varDelta_\delta\big) 
 \sim
 \alpha\cdot \frac{\overline{\nu}(\beta t+x)}{\beta t+x}
 \cdot \int_{x}^{x+\delta}V(z)dz,\quad \mbox{as $t\to\infty$},
 \eeqnn 	
 uniformly in $\delta \in [\theta _0,\theta _t]$ and $x\geq 0$. 
 This local asymptotic relation further suggests the  vague convergence of rescaled measures   $\frac{\beta t}{\overline{\nu}(\beta t)} \cdot \underline{n}_t\big(dz)$  to the $\sigma$-finite measure $\alpha \cdot V(z)\,dz$.  
 More precisely, we have the following consequence.

  \begin{corollary} \label{Corollary.101}
 For any $0\leq x<y<\infty$ and any continuous function $f$ on $\mathbb R_+$, we have  
 	\beqnn
 	\lim_{t\to\infty} \frac{\beta t}{\overline{\nu}(\beta t)}\int_x^y f(z)\, \underline{n}_t\big(dz) 
 	= \alpha \int_x^y f(z)\,V(z)\,dz . 
 	\eeqnn 
 \end{corollary}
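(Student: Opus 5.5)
The plan is to derive Corollary~\ref{Corollary.101} from Theorem~\ref{MainThm01} by a Riemann-sum argument. We approximate $f$ by step functions on intervals of fixed positive length, which are exactly the windows where Theorem~\ref{MainThm01} applies uniformly.

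\emph{Step 1: intervals.} Fix $0\le x<y<\infty$ and put $\delta=y-x>0$. Theorem~\ref{MainThm01} gives, as $t\to\infty$,
\[
\underline n_t\big((x,y]\big)\sim \alpha\int_x^y \frac{\overline\nu(\beta t+z)}{\beta t+z}\,V(z)\,dz .
\]
For $z\in[0,y]$ we have $(\beta t+z)/(\beta t)\to1$ uniformly in $z$. Since $\overline\nu\in \mathrm{RV}^\infty_{-\alpha}$ by Remark~\ref{Remark01}, the uniform convergence theorem for regularly varying functions also gives $\overline\nu(\beta t+z)/\overline\nu(\beta t)\to1$ uniformly in $z\in[0,y]$. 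Hence
\[
\frac{\beta t}{\overline\nu(\beta t)}\,\underline n_t\big((x,y]\big)\to \alpha\int_x^y V(z)\,dz ,
\]
where $V$ is finite and nondecreasing, so locally integrable.

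\emph{Step 2: step functions.} Partition $(x,y]$ into finitely many subintervals $(z_{i-1},z_i]$, each of positive length. Step 1 applies to each subinterval, so the claim holds for every finite linear combination of indicators $\mathbf 1_{(z_{i-1},z_i]}$.

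\emph{Step 3: continuous $f$.} Given $\varepsilon>0$, uniform continuity of $f$ on $[x,y]$ gives a step function $g$ of the above type with $\sup_{(x,y]}|f-g|\le\varepsilon$. Then
\[
\Big|\frac{\beta t}{\overline\nu(\beta t)}\int_x^y (f-g)\,d\underline n_t\Big|\le \varepsilon\,\frac{\beta t}{\overline\nu(\beta t)}\,\underline n_t\big((x,y]\big),
\]
and by Step 1 the right side has limsup at most $\varepsilon\,\alpha\int_x^yV$. Likewise $\big|\alpha\int_x^y (f-g)V\,dz\big|\le \varepsilon\,\alpha\int_x^y V$. Letting $t\to\infty$ and then $\varepsilon\to0$ gives the result.

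\emph{Endpoint issues.} The intervals are half-open, $(x,y]$, so the integral $\int_x^y f\,d\underline n_t$ should be read over $(x,y]$. Reading it over $[x,y]$ changes nothing. When $x=0$, the measure $\underline n_t$ lives on $(0,\infty)$, since $\zeta>t$ forces $\epsilon_t>0$. When $x>0$, the atom at $\{x\}$ is negligible, because
\[
\underline n_t(\{x\})\le \underline n_t\big((x-\eta,x]\big)
\]
for any small $\eta>0$, and by Step 1 the right side is $O\big(\eta\,\overline\nu(\beta t)/t\big)$, which can be made arbitrarily small relative to $\overline\nu(\beta t)/(\beta t)$.

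The main point needing care is the uniform regular-variation step in Step 1; it is standard on the compact range $z\in[0,y]$. Everything else is routine approximation.
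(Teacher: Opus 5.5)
Your proposal is correct and follows essentially the same route as the paper. Both arguments approximate $f$ uniformly on $[x,y]$ by step functions over a fixed finite partition into intervals of positive length, and apply Theorem~\ref{MainThm01} to each piece together with the uniform convergence $\frac{\beta t}{\overline{\nu}(\beta t)}\cdot\frac{\overline{\nu}(\beta t+z)}{\beta t+z}\to 1$ on compacts. They then bound the approximation error by $\sup|f-g|$ times the asymptotically bounded quantity $\frac{\beta t}{\overline{\nu}(\beta t)}\,\underline{n}_t\big((x,y]\big)$. The paper uses the grid $k/n$ with two possibly shorter boundary pieces, which is only a cosmetic difference, and your remark on the endpoint atom is a harmless extra.
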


 As our final main result, we establish the corresponding uniform local large-deviation asymptotics for the L\'evy process killed upon its first entrance into the negative half-line. This follows by combining Theorem~\ref{MainThm01} with the decomposition in \eqref{eqn.30111}.  
 
  \begin{corollary}\label{Main.Corollary02}
  	For any $y,\delta_0>0$, we have as $t\to\infty$, uniformly in $\delta\in [\delta_0,\infty]$ and $x\geq 0$,
  	\beqlb\label{eqn.Corollary02}
  	\mathbf{P}_y\big( X_t\in x+\varDelta_\delta,\, \tau_0^->t \big) 
  	\sim \alpha \cdot \widehat{V}(y)\cdot \int_x^{x+\delta}  \frac{\overline{\nu}(\beta t+z)}{\beta t+z} \cdot V(z)\, dz. 
  	\eeqlb
  	
  	\end{corollary}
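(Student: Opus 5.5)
The plan is to start from the decomposition \eqref{eqn.30111} and analyse its two terms separately, using Theorem~\ref{MainThm01} for the entrance law inside each. Write $g_t(x,\delta):=\alpha\int_x^{x+\delta}\frac{\overline\nu(\beta t+z)}{\beta t+z}V(z)\,dz$ for the right-hand side of \eqref{eqn.MainThm01}. The target constant should come from Wiener--Hopf theory. I expect an identity of the form
\beqnn
\widehat V(y)=\overline{\tt d}+\int_0^\infty ds\,\overline n_s\big((0,y]\big),
\eeqnn
i.e. the renewal function of the descending ladder height equals the drift plus the potential of the excursion measure $\overline n$ below level $y$, up to the normalisation of local time fixed in \eqref{eqn2.9}. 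I would first verify this identity by letting $x\downarrow 0$, $\delta\to\infty$ in \eqref{eqn.30111}, or by quoting the standard fact that $\int_0^\infty\overline n_s(dz)\,ds$ is the descending ladder-height potential measure.

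\emph{Drift term.} The first term is $\overline{\tt d}\,\underline n_t\big(((x-y)^+,(x-y+\delta)^+]\big)$. When $x\ge y$ it equals $\overline{\tt d}\,\underline n_t((x-y)+\varDelta_\delta)\sim \overline{\tt d}\,g_t(x-y,\delta)$. We need $g_t(x-y,\delta)\sim g_t(x,\delta)$ uniformly in $x$ and $\delta$. This holds for two reasons. First, $\overline\nu(\beta t+z)/(\beta t+z)$ varies regularly in a way that makes a shift by $y$ negligible as $t\to\infty$, uniformly in $z\ge0$. Second, $V$ is subadditive and increasing with $V(z+y)\le V(z)+V(y)$. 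Together these give the comparison $\int_{x-y}^{x-y+\delta}V\approx\int_x^{x+\delta}V$. The relative error here is of order $yV(y)/\int_x^{x+\delta}V$, which is small only because $\delta\ge\delta_0$ and $V$ is bounded below by $V(0+)>0$ on bounded ranges. I need to check this carefully, and I expect it to be the main obstacle. When $x<y$ the interval is truncated at $0$, and a similar but cruder estimate is needed.

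\emph{Integral term.} The second term is
\beqnn
\int_0^t ds\int_0^y \underline n_{t-s}\big(((x-y+z)^+,(x-y+z+\delta)^+]\big)\,\overline n_s(dz).
\eeqnn
I split it at $s\le t-\theta_t$ versus $s>t-\theta_t$ for a suitable $\theta_t\to\infty$ with $\theta_t=o(t)$.

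For $s\le \sqrt t$ (or any $o(t)$ range), Theorem~\ref{MainThm01} applies with $t-s$ in place of $t$. Since $\overline\nu(\beta(t-s)+z)\sim\overline\nu(\beta t+z)$ uniformly, together with the shift argument above, each such term is $\sim g_t(x,\delta)$. Integrating against $ds\,\overline n_s(dz)$ over $z\le y$ yields $\int_0^\infty\overline n_s((0,y])\,ds\cdot g_t(x,\delta)$ by dominated convergence.

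For large $s$, I would use a uniform bound of the form $\underline n_{r}(I)\le C g_r(\cdot)$, or a crude bound via Theorem~\ref{MainThm01} for $r$ large plus Theorem~\ref{MainThm00}-type bounds for bounded $r$. I would combine this with the exponential-type decay of $\overline n_s((0,y])$ in $s$: under negative drift the downward excursion $\overline X-X$ has a finite-mean lifetime, and the mass near zero decays. The aim is to show $\int_{\sqrt t}^t \overline n_s((0,y])\sup_{r\le t}\underline n_r(\cdots)\,ds=o(g_t(x,\delta))$. The delicate part is $s$ near $t$, where $t-s$ is small and the polynomial factor $\overline\nu(\beta t)/t$ must be produced by $\overline n_s((0,y])$ itself. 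For this I would use that $\overline n_s((0,y])$ is bounded by $\overline n(\zeta>s)$-type tails, which by our earlier lifetime results in \cite{CuiWuXu2025} are of order $\overline\nu(\beta s)$. That gives the required $o(\cdot)$ after integrating over a window of length $\theta_t$, combined with Theorem~\ref{MainThm00}-type finiteness of $\underline n_{r}$ masses for $r\le\theta_t$.

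Adding the two contributions and using the identity for $\widehat V(y)$ yields \eqref{eqn.Corollary02}.
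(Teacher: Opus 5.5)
Your skeleton is sound: the decomposition \eqref{eqn.30111}, Theorem~\ref{MainThm01} applied inside it, and your identity for $\widehat V(y)$, which is exactly \eqref{eqn.447}. But two steps fail, and they fail precisely for bounded $x$, which the required uniformity in $x\geq0$ includes.

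First, the shift claim $g_t(x-y+z,\delta)\sim g_t(x,\delta)$ is false. The factor $\overline\nu(\beta t+\cdot)/(\beta t+\cdot)$ is insensitive to bounded shifts, but the ratio $\int_{(x-y+z)^+}^{(x-y+z+\delta)^+}V(u)\,du\big/\int_x^{x+\delta}V(u)\,du$ does not involve $t$ at all. At $x=0$, $\delta=\delta_0$, $z<y$, the shifted window $(0,(\delta_0-y+z)^+]$ is strictly shorter than $(0,\delta_0]$. Since $V>0$ on $(0,\infty)$, the ratio is a fixed number strictly below $1$. Also $V(0)=0$ by \eqref{eqn.2011}, so the bound $V(0+)>0$ you want is unavailable. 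Your relative error $yV(y)/\int_x^{x+\delta}V$ thus vanishes only as $x\to\infty$, not as $t\to\infty$. After dividing by $\overline\nu(\beta t+x)/(\beta t+x)$, the drift term and the range $s\leq R$ (then $R\to\infty$) actually give $\int_{[0,y]}F(z)\,\widehat V(dz)$, where $F(z):=\alpha\int_{(x-y+z)^+}^{(x-y+z+\delta)^+}V(u)\,du$. This is not $\widehat V(y)F(y)=\alpha\widehat V(y)\int_x^{x+\delta}V(u)\,du$.

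Second, the range $s\in[t-R,t]$ is not negligible. The bound $\overline n_s((0,y])\leq\overline n(\zeta>s)$ is useless: $X$ drifts to $-\infty$, so $\overline n(\zeta>s)\geq\overline n(\zeta=\infty)>0$ (the killing rate $\kappa(0,0)$ of the ascending ladder process). The lifetime results of \cite{CuiWuXu2025} concern $\underline n$, not $\overline n$. Even the sharp rate does not make this window small. By Lemma~\ref{Lem5.4}, $\overline n_s(dz)\approx\alpha\frac{\overline\nu(\beta s)}{\beta s}\widehat V(z)\,dz$ on $[0,y]$. So after normalization the window contributes $\alpha\int_0^y\widehat V(u)\Big(\int_0^R\underline n_r\big(((x-y+u)^+,(x-y+u+\delta)^+]\big)\,dr\Big)du$. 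This is of the same order as the main term whenever $x$ stays bounded. It is the scenario where the big jump occurs at the start and the minimum over $[0,t]$ is attained only near time $t$.

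The missing idea is that the two pieces recombine exactly. By \eqref{eqn.2011}, $F'(z)=\alpha\int_0^\infty\underline n_s\big(((x-y+z)^+,(x-y+z+\delta)^+]\big)\,ds$. Hence, as $R\to\infty$, the late window gives $\int_0^y\widehat V(z)\,dF(z)$. Integration by parts then yields $\int_{[0,y]}F\,d\widehat V+\int_0^y\widehat V\,dF=\widehat V(y)F(y)$.

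Your two errors happen to cancel in the final formula, but as written the argument only covers $x\to\infty$. For uniformity in $x\geq0$ you need three things:
\begin{itemize}
\item Lemma~\ref{Lem5.4} for the window near $t$;
\item the same lemma together with Theorem~\ref{MainThm01}, to show the middle range $s\in[R,t-R]$ is negligible;
\item the integration-by-parts identity above.
\end{itemize}
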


   \smallskip
  {\it \textbf{Organization of this paper.}} In Section~\ref{Sec.Preliminaries}, we recall the basic fluctuation theory for L\'evy processes and also  present several auxiliary lemmas that are used frequently in the proofs of our main results. Section~\ref{Sec.Proof01} is devoted to proving Theorem~\ref{MainThm00} and Corollary~\ref{Main.Corollary01}. 
  In Section~\ref{Sec.Proof02}, we first provide  detailed proofs of Theorem~\ref{MainThm01} and Corollary~\ref{Corollary.101}, then prove  Corollary~\ref{Main.Corollary02} by combining the path decomposition \eqref{eqn2.9} with Theorem~\ref{MainThm01}.

  \section{Preliminaries} 
 \label{Sec.Preliminaries}
 \setcounter{equation}{0}

  %In this section, we first introduce some further notation and recall some elementary fluctuation theory of L\'evy processes. We refer to the monographs \cite{Bertoin1996,Doney2007,Kyprianou2014}  for more details. 
  %After that, we recall and establish several auxiliary lemmas that will play an important role in the proofs of our main results. 
  
  This section begins by introducing additional notation and recalling basic fluctuation theory for Lévy processes, for which we refer to \cite{Bertoin1996,Doney2007,Kyprianou2014}. We then present several auxiliary lemmas that will be essential for proving our main results.

  Let $D\big(\mathbb{R}_+;\mathbb{R}\big)$ be the space of all c\`adl\`ag functions on $\mathbb{R}_+$ endowed with the Skorokhod topology. 
 We assume all processes in this work are defined on a complete filtered probability space $(\Omega,\mathscr{F},\mathscr{F}_t,\mathbf{P})$ satisfying the usual hypotheses. 
 For $x\in\mathbb{R}$, let $\mathbf{P}_x$ and $\mathbf{E}_x$ denote the law and expectation of the  process starting from $x$. 
 For simplicity, we also write $\mathbf{P}= \mathbf{P}_0$ and $\mathbf{E}=\mathbf{E}_0$. 
  Let   $\overset{\rm a.s.}\to$, $\overset{\rm d}\to$  and  $\overset{\rm p}\to$ be the  almost sure convergence,  convergence in distribution and convergence in probability respectively.
 We also use $\overset{\rm a.s.}=$, $\overset{\rm d}=$ and $\overset{\rm p}=$ to denote almost sure equality, equality in distribution and equality in probability respectively.

 \subsection{Fluctuation theory}

 Let $\Delta X_t := X_t-X_{t-}$ denote the jump size of $X$ at time $t  > 0$. 
 The point measure of all jumps 
 \beqnn
 \sum_{t>0} \mathbf{1}_{\{ \Delta X_t\neq 0 \}}\cdot \delta_{(t,\Delta X_t)}(ds,dy),
 \eeqnn
 is a Poisson random measure on $(0,\infty)\times \mathbb{R}$ with intensity $ds\, \nu(dy)$.  
 By the compensation formula for Poisson random measures (see \cite[p.7]{Bertoin1996}), we have that for any $(\mathscr{F}_t)$-predictable process $\mathbf{F}=\{ \mathbf{F}_t:t\geq 0 \}$ taking values in the space of non-negative measurable functions on $\mathbb{R}$, 
  \beqlb\label{eqn.CompenstationX}
  \mathbf{E}\bigg[ \sum_{t>0} \mathbf{F}_t(\Delta X_t) \cdot \mathbf{1}_{\{ \Delta X_t\neq 0 \}}  \bigg]= \mathbf{E}\bigg[\int_0^\infty dt \int_{\mathbb{R}\setminus\{0\}}\mathbf             {F}_t(y)\, \nu(dy)\bigg].
  \eeqlb
  
 The reflected process $X-\underline{X}$ is a Markov process with Feller transition semigroup given by $\mathbf{P}_x(X_t\in dy, \underline{X}_t>0)$ for any $x,y>0$; see, e.g., Proposition~1 in \cite[p.156]{Bertoin1996}. 
 Its excursions away from $0$ form a Poisson point process on $\mathcal{E}$ with intensity $\underline{n}(d\epsilon)$.  
 Under $\underline{n}(d\epsilon)$, the excursion process is Markovian and shares the same semigroup of $X$ killed upon entering $(-\infty,0]$, that is, for any $t,s>0$, any measurable functionals $F$ on $D\big([0,t];\mathbb{R}\big)$ and  $G$ on $D\big([0,s];\mathbb{R}\big)$, 
 \beqlb\label{eqn.n-Markov}
 \lefteqn{\underline{n}\Big(F\big(\epsilon_r, r \in[0,t]\big) \cdot G\big(\epsilon_{r+t}, r\in[0,s]\big),\, \zeta>t+s\Big)}\ar\ar\cr
 \ar\ar\cr
 \ar=\ar \underline{n}\Big(F\big(\epsilon_r, r \leq t\big)\cdot \mathbf{E}_{\epsilon_t}\big[G\big(X_r, r\in[0,s]\big),\,  \underline{X}_s>0\big],\, \zeta>t\Big)  .
 \eeqlb
 An important consequence of \eqref{eqn.CompenstationX} and \eqref{eqn.n-Markov} is that the compensation formula also holds under $\underline{n}(d\epsilon)$, i.e., let $\mathscr{G}_t:=\sigma\big(\{\epsilon_s:s\in [0,t], \epsilon \in \mathcal{E}\}\big)$ for any $t\geq 0$, for any $(\mathscr{G}_t)$-predictable process $\mathbf{F}=\{ \mathbf{F}_t:t\geq 0 \}$ taking values in the space of non-negative measurable functions on $\mathbb{R}$, we have 
 \beqlb\label{eqn.Compensation}
 \underline{n}\bigg(\sum_{0<s\leq \zeta} \mathbf{F}_s(\Delta \epsilon_s) \cdot 1_{\{\Delta \epsilon_s \neq 0\}}\bigg)
 = \underline{n}\bigg(\int_0^\zeta ds \int_{\mathbb{R}\setminus\{0\}} \mathbf{F}_s(y)\,\nu(dy)\bigg).
 \eeqlb

 \begin{proposition}\label{Prop.203}
  For any $t> 0$ and $x,y> 0$, we always have 
  \beqnn
 	\underline{n}\bigg(  \sum_{0<s<r\leq t\wedge \zeta} \mathbf{1}_{\{\Delta \epsilon_s > x ,\, \Delta \epsilon_r > y \}} \bigg) 
 	\leq t \cdot \underline{n}(\zeta ) \cdot \overline{\nu}(x)  \cdot \overline{\nu}(y).   
  \eeqnn
 \end{proposition}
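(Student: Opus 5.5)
We will apply the compensation formula \eqref{eqn.Compensation} twice, first to the later jump and then to the earlier one, and conclude with the Markov property \eqref{eqn.n-Markov}.

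\emph{Step 1 (later jump).} Write the double sum as $\sum_{0<r\le t\wedge\zeta}\mathbf 1_{\{\Delta\epsilon_r>y\}}\,N_{r-}$, where
\[
N_{r-}:=\sum_{0<s<r}\mathbf 1_{\{\Delta\epsilon_s>x\}}
\]
is left-continuous and adapted, hence $(\mathscr G_r)$-predictable. Applying \eqref{eqn.Compensation} with $\mathbf F_r(z)=\mathbf 1_{\{r\le t\}}N_{r-}\mathbf 1_{\{z>y\}}$ shows that the left-hand side equals
\[
\overline\nu(y)\,\underline n\Big(\int_0^{t\wedge\zeta}N_{r-}\,dr\Big).
\]

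\emph{Step 2 (Fubini).} Exchange the sum over $s$ and the integral over $r$:
\[
\int_0^{t\wedge\zeta}N_{r-}\,dr=\sum_{0<s\le t\wedge\zeta}\mathbf 1_{\{\Delta\epsilon_s>x\}}\,\big(t\wedge\zeta-s\big).
\]
The factor $(t\wedge\zeta-s)$ depends on the future of the path, so it is not predictable. Bound it by $t\wedge\zeta$, which is also not predictable. To handle this, write $t\wedge\zeta-s=\int_s^t\mathbf 1_{\{\zeta>u\}}\,du$ and apply Fubini once more:
\[
\underline n\Big(\sum_{s}\mathbf 1_{\{\Delta\epsilon_s>x\}}(t\wedge\zeta-s)\Big)
=\int_0^t du\;\underline n\Big(\sum_{0<s\le u\wedge\zeta}\mathbf 1_{\{\Delta\epsilon_s>x\}}\,\mathbf 1_{\{\zeta>u\}}\Big).
\]

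\emph{Step 3 (earlier jump).} Bound $\mathbf 1_{\{\zeta>u\}}\le 1$ and apply \eqref{eqn.Compensation} again, now with the predictable integrand $\mathbf 1_{\{s\le u\}}\mathbf 1_{\{z>x\}}$. This gives
\[
\underline n\Big(\sum_{0<s\le u\wedge\zeta}\mathbf 1_{\{\Delta\epsilon_s>x\}}\Big)=\overline\nu(x)\,\underline n(u\wedge\zeta)\le\overline\nu(x)\,\underline n(\zeta).
\]
Integrating over $u\in[0,t]$ then yields $t\,\underline n(\zeta)\,\overline\nu(x)\,\overline\nu(y)$, which is the claimed bound.

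Here $\underline n(\zeta)<\infty$ because $\beta>0$; it is proportional to $1/\beta$ through the ladder time exponent. If it were infinite, the bound would hold trivially.

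The main obstacle is measure-theoretic: the compensation formula under the $\sigma$-finite measure $\underline n$ must be justified for integrands such as $N_{r-}$. This follows from \eqref{eqn.Compensation} as stated, since it covers all nonnegative predictable $\mathbf F$. An alternative to the time-integral trick in Step 2 is to condition at time $s$ using \eqref{eqn.n-Markov} together with $\mathbf E_{\epsilon_s}[\tau_0^-\wedge t]\le t$.
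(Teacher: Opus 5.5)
Your proof is correct, and it takes a different route from the paper's.

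The paper handles the \emph{earlier} jump first. It applies the Markov property at the time $s$ of a jump exceeding $x$, uses \eqref{eqn.Compensation} for that jump, and then uses \eqref{eqn.CompenstationX} under $\mathbf P_{z+w}$ for the later jump. This gives
$\overline\nu(y)\int_0^t ds\int\underline n_s(dz)\int_x^\infty\nu(dw)\,\mathbf E_{z+w}[\tau_0^-\wedge(t-s)]$, and the expected residual lifetime is then bounded by $t$. This is essentially the alternative you mention at the end.

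You instead compensate the \emph{later} jump first, using that the count $N_{r-}$ of earlier big jumps is predictable. You then remove the anticipating weight $t\wedge\zeta-s$ by writing it as $\int_s^t\mathbf 1_{\{\zeta>u\}}\,du$ and applying Tonelli, so the second compensation has a deterministic integrand.

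What your route buys:
\begin{itemize}
\item It needs only \eqref{eqn.Compensation} under $\underline n$ plus Tonelli.
\item It avoids the Markov property at jump times. In the paper this step is really a combination of \eqref{eqn.n-Markov} with the compensation formula, summarized as ``by the Markov property''.
\item It avoids a second compensation formula under $\mathbf P_{z+w}$.
\item It gives a slightly sharper intermediate bound, $\overline\nu(x)\overline\nu(y)\int_0^t\underline n(u\wedge\zeta)\,du$.
\end{itemize}

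What the paper's route buys is an explicit expression in terms of $\underline n_s(dz)$, $\nu$ and the killed semigroup. The same Markov-plus-compensation device is reused in Proposition~\ref{Prop.502} and Lemma~\ref{Lemma.402}.

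Your side remark that $\underline n(\zeta)$ is ``proportional to $1/\beta$ through the ladder time exponent'' is imprecise; the paper has $\underline n(\zeta)=1/\kappa(0,0)$ in \eqref{eqn.501}. It plays no role, since, as you note, the inequality is trivial if $\underline n(\zeta)=\infty$.
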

 \proof  By  the Markov property \eqref{eqn.n-Markov}, we first have 
 \beqnn
 \underline{n}\bigg(  \sum_{0<s<r\leq t\wedge \zeta} \mathbf{1}_{\{\Delta \epsilon_s > x ,\, \Delta \epsilon_r > y \}} \bigg) 
 = \underline{n}\bigg( \sum_{0<s\leq t\wedge \zeta} \mathbf{1}_{\{\Delta \epsilon_s > x  \}} \cdot \mathbf{E}_{\epsilon_{s-} +\Delta\epsilon_s} \Big[ \sum_{0<r\leq t-s} \mathbf{1}_{\{ \tau_0^->r,\,\Delta X_r > y  \}}\Big] \bigg).
 \eeqnn
 Using the compensation formulas \eqref{eqn.Compensation} and  \eqref{eqn.CompenstationX} one-by-one yields that  the right-hand side equals
 \beqnn
 \lefteqn{ \int_0^t ds \int_0^\infty \underline{n}_s(dz) \int_x^\infty  \mathbf{E}_{z+w} \bigg[ \sum_{0<r\leq t-s} \mathbf{1}_{\{ \tau_0^->r,\,\Delta X_r > y \}}\bigg]  \, \nu(dw)   } \cr
 \ar=\ar \int_0^t ds \int_0^\infty \underline{n}_s(dz) \int_x^\infty \nu(dw)  \int_0^{t-s}  \mathbf{P}_{z+w} \big(   \tau_0^->r \big)   \, dr  \cdot \overline{\nu}(y)
 \leq t\cdot \int_0^t   \underline{n}(\zeta>s) \, ds   \cdot \overline{\nu}(x)  \cdot \overline{\nu}(y)  , 
 \eeqnn
 which is smaller than $ t \cdot \underline{n}(\zeta ) \cdot \overline{\nu}(x)  \cdot \overline{\nu}(y)$. The proof ends.
 \qed  
 
 %The state $0$ is regular for $(0,\infty)$ [resp. for $(-\infty,0)$] if and only if it is regular for $\{ 0\}$, for $\overline{X}-X$ [resp. for $X-\underline{X}$].  

 %Note that our assumption that $X$ has non-monotone sample paths, the state $0$ is always regular for $(0,\infty)$ or/and $(-\infty,0)$.
 %It is well-known that the two reflected processes $\overline{X}-X$ and $X-\underline{X}$ are strong Markov processes. 
 %The point processes of their excursions away $0$ form two Poisson point processes with intensities $\overline{n}(d\epsilon)$ and $\underline{n}(d\epsilon)$ respectively. 
 %The state $0$ is regular for $(0,\infty)$ [resp. for $(-\infty,0)$] if and only if it is regular for $\{ 0\}$, for $\overline{X}-X$ [resp. for $X-\underline{X}$].  

 Write $\widehat{X}_t:=-X_t$ for the dual L\'evy process of $X$. Let $\widehat{L} = \{\widehat{L}_t: t\ge 0\}$ be the local time at zero of $X-\underline{X}$ in the sense of \cite[p.109]{Bertoin1996}.
 Without loss of generality, we normalize it so that 
 \beqnn
  \mathbf{E}\bigg[ \int_0^\infty e^{-t} d \widehat{L}_t\bigg] =1. 
 \eeqnn
 Its right-inverse $\widehat{L}^{-1}$ is referred to as the \textsl{descending ladder time process}, and $\widehat{H}= \{ \widehat{H}_t :t\geq 0 \}$ with $\widehat{H}_t := \widehat{X}_{\widehat{L}^{-1}_t}$ is called the \textsl{descending ladder height process}. 
 Lemma~2 in \cite[p.157]{Bertoin1996} states that $(\widehat{L}^{-1},\widehat{H})$ is a L\'evy process (possibly killed at an exponential rate), whose \textit{bivariate exponent}, by \cite[p.166]{Bertoin1996}, is given by
 \beqnn
 \widehat\kappa(\lambda,u)
 =-\log \mathbf{E}\Big[e^{-\lambda \widehat{L}^{-1}_1-u\widehat{H}_1}\Big]
 \ar=\ar
 \exp\Big\{\int_0^\infty \frac{dt}{t}\int_{(-\infty,0]}
 (e^{-t}-e^{-\lambda t+ux})\mathbf{P}(X_t\in dx)\Big\},
 \quad \lambda,u\geq 0.
 \eeqnn
  In particular, both $\widehat L^{-1}$ and $\widehat H$ are (possibly killed) subordinators with drifts $\underline{\tt d},\underline{ a}\geq 0$.
  Moreover, $\underline{\tt d}=0$ if and only if the state $0$ is regular for   $(0,\infty)$, for $X$. 
  By (6.16) in \cite[p.170]{Kyprianou2014}, the bivariate exponent $\widehat\kappa(\lambda, u)$ also admits the  representation
  \beqlb\label{eqn.kappa}
   \widehat \kappa(\lambda,u) \ar=\ar \underline{\tt d}\cdot \lambda + \underline{ a}\cdot u + \int_0^\infty \int_0^\infty (1- e^{-\lambda t-u x})\,\underline{n}(\zeta \in dt,-\epsilon_\zeta \in dx). 
  \eeqlb
  Since $0$ is assumed to be regular for $(0, \infty)$, for $X$, we have $\underline{\mathrm{d}}=0$.
  Applying integration by parts to \eqref{eqn.kappa} with $u=0$ gives that
  \beqlb\label{eqn.551}
  \widehat \kappa(\lambda,0) =  \int_0^\infty (1- e^{-\lambda t})\,\underline{n}(\zeta \in dt)
  = \lambda \int_0^\infty  e^{-\lambda t}\underline{n}(\zeta > t)\, dt,\quad \lambda \geq 0.
  \eeqlb
  By Theorem 6.9 (ii) in \cite{Kyprianou2014},  we have that $\kappa(0,0) >0$ since $X$ drifts to  $-\infty$ and hence 
  \beqlb\label{eqn.501}
  \underline{n}(\zeta )= \int_0^\infty  \underline{n}(\zeta > t)dt=\lim_{\lambda\to 0}\frac{\widehat \kappa(\lambda,0)}{\lambda} =\frac{1}{ \kappa(0,0)} < \infty.
  \eeqlb
  Here the third equality follows from the identity $\widehat \kappa(\lambda,0) \cdot \kappa(\lambda,0)=\lambda$ for all $\lambda \geq 0$.

 Denote by $\widehat{U}(dt,dy)$ the \textsl{potential measure} of $(\widehat{L}^{-1},\widehat{H})$. 
 It is a $\sigma$-finite measure on $\mathbb{R}^2_+$ defined by
 \beqlb\label{eqn.U}
 \widehat{U}(dt,dy):= \int_0^\infty \mathbf{P}\big( \widehat{L}^{-1}_s \in  dt,\widehat{H}_s\in dy \big)\,ds.
 \eeqlb
 In particular, its marginal cumulative function in space is also known as the \textsl{renewal function} associated with the ladder height process $\widehat{H}$, that is, 
 \beqlb\label{eqn.DefV}
  \widehat{V}(x) := \widehat{U}\big(\mathbb{R}_+,[0,x]\big)= \int_0^\infty \mathbf{P}\big( \widehat{H}_s \leq x \big)\,ds,\quad x\geq 0;
 \eeqlb
 see e.g.  \cite[Chapter VI]{Bertoin1996} and \cite[Chapter 6]{Kyprianou2014}. 
 By \eqref{eqn.kappa} and \eqref{eqn.U}, the function $\widehat{V}$ also satisfies that
 \beqlb\label{eqn.447}
 \widehat{V}(x) = \bar{\tt d} +  \int_0^\infty  \overline{n}(  \epsilon_s\leq x, \zeta>s)\, ds ,\quad x\geq 0;
 \eeqlb
 see also (16) in \cite{DoneyRivero2013}. 
  Recall that $\tau_0^-$ is the first passage time of $X$ into $(-\infty,0)$.    For any $x>0$, by Proposition 17 (ii) in \cite[p.172]{Bertoin1996} and Theorem 2.2  in \cite{DenisovShneer2013} we have 
 \beqlb  \label{Asytau} 
 \mathbf{E}_{x}[\tau_0^-]%=\frac{\widehat{V}(x)}{\kappa(0,0)} 
 = \underline{n}(\zeta ) \cdot \widehat{V}(x)
 \quad \mbox{and}\quad 
 \mathbf{P}_x\big(\tau_0^->t\big)  
 \sim  \underline{n}(\zeta ) \cdot \widehat{V}(x) \cdot \overline{\nu}(\beta t) \in \mathrm{RV}^\infty_{-\alpha} ,\quad \mbox{as }t\to\infty. 
 \eeqlb
 Analogously, 
 %let $\overline{X} = \{\overline{X}_t: t\ge 0\}$ be the \textit{supremum process} with $\overline{X}_t = \sup_{s\in[0,t]}X_s$ and $\overline{X}-X $ be the \textit{reflected process at the past supremum}. 
 quantities associated with $\overline{X}$ and $\overline{X}-X$ are denoted without hats, e.g., $\kappa(\lambda,u)$, $U(dt,dy)$ and $V(x)$,  and satisfy analogous properties. 
 In particular, we have for any $x\geq 0$, 
 \beqlb\label{eqn.2011}
 V(x) = \int_0^\infty \mathbf{P}\big( H_s \leq x \big)\,ds =   \int_0^\infty  \underline{n}(  \epsilon_s\leq x, \zeta>s)\, ds  
 \quad \mbox{and}\quad 
 V(\infty)=  \underline{n}(\zeta) <\infty. 
 \eeqlb

 \subsection{Auxiliary lemmas}
 \label{Sec.AuxiliaryLemma}

   This section collects three auxiliary lemmas that play an important role in the proofs of our main theorems. The first is drawn from Corollary~1.4 in \cite{WuXu2026a}, which provides a unified treatment of uniform local large-deviation asymptotics for centered L\'evy processes with regularly varying jumps. The second and third establish, respectively, a moment estimate and the local asymptotics of the entrance law $\overline n_t(dx)$; see Lemmas~\ref{Lem5.1} and~\ref{Lem5.4}.

 \begin{lemma}  \label{Lemma.203}
 For any $\lambda,\delta_0>0$ and any non-negative function $\vartheta$ on $\mathbb{R}_+$ with $\vartheta_x/x\to0$ as $x\to\infty$, we have as $t\to\infty$, 
  \beqlb\label{eqn.324}
  \sup_{0<s\leq  t} \sup_{x\geq \lambda t} \sup_{|y|\leq \vartheta_x} \sup_{\delta\in[\delta_0,\infty]} 
  \bigg| \frac{\mathbf{P}\big(X_s + \beta s \in x -y+ \varDelta_\delta \big)}{s\cdot  \nu\big(x + \varDelta_\delta \big)   } -1\bigg|    
   \to 0. 
  \eeqlb
 \end{lemma}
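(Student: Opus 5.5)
The plan is to prove \eqref{eqn.324} by the standard one-big-jump decomposition for the centered process $Y_s:=X_s+\beta s$, uniformly in $s\le t$. Throughout, the time horizon satisfies $s\le t\le x/\lambda$. Fix a small $\gamma\in(0,1)$ and split the jumps of $X$ at level $\gamma x$. This gives $Y_s=Y^{(\gamma x)}_s+J_s$, where $J_s$ is the sum of jumps larger than $\gamma x$ up to time $s$. The process $Y^{(\gamma x)}$ is the centered process with truncated positive jumps; the centering constant $s\int_{\gamma x}^\infty w\,\nu(dw)$ is of order $s x\overline\nu(x)=o(x)$ and can be absorbed into the shift $y$. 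Let $N_s$ be the number of big jumps. I would treat the three events $\{N_s=0\}$, $\{N_s=1\}$ and $\{N_s\ge2\}$ separately, comparing each to $s\,\nu(x+\varDelta_\delta)$. By Remark~\ref{Remark01} and the uniform convergence theorem for regularly varying functions, $s\,\nu(x+\varDelta_\delta)$ is comparable to $s\,\overline\nu(x)\,(\delta\wedge x)/x$ uniformly in $\delta\in[\delta_0,\infty]$.

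\emph{Main term, $N_s=1$.} Condition on the single big jump, which occurs at a uniform time in $[0,s]$ and has law $\nu(dw)\mathbf 1_{\{w>\gamma x\}}/\overline\nu(\gamma x)$. This term equals
\[
s\,e^{-s\overline\nu(\gamma x)}\,\mathbf E\big[\nu\big(x-y-Y^{(\gamma x)}_s+\varDelta_\delta\big)\,;\,\cdot>\gamma x\big].
\]
First, $e^{-s\overline\nu(\gamma x)}\to1$ uniformly, because $s\overline\nu(\gamma x)\le x\overline\nu(\gamma x)/\lambda\to0$ when $\alpha>1$. Next, on the event $|Y^{(\gamma x)}_s|\le \eta x$, the local regular variation in Condition~\ref{Main.Condition} can be used. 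Together with Remark~\ref{Remark01} and the uniform convergence theorem, it gives $\nu(x-y-z+\varDelta_\delta)\sim\nu(x+\varDelta_\delta)$ uniformly over $|y|\le\vartheta_x$, $\delta\ge\delta_0$, and $|z|\le\eta x$, up to a factor $(1\pm\epsilon(\eta))$ with $\epsilon(\eta)\to0$ as $\eta\to0$; the case $\delta=\infty$ goes through $\overline\nu$. It remains to show that $\mathbf P(|Y^{(\gamma x)}_s|>\eta x)\to0$ uniformly in $s\le x/\lambda$. This follows from a weak law of large numbers, or simply Chebyshev applied to the truncated second moment, which is $s\int_0^{\gamma x}w^2\nu(dw)=o(x^2)$ when $\alpha>1$.

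\emph{Remainder terms.} On $\{N_s=1,\ |Y^{(\gamma x)}_s|>\eta x\}$, bound $\nu(\cdot+\varDelta_\delta)$ by $\sup_{u\ge\gamma x/2}\nu(u+\varDelta_\delta)\lesssim\nu(x+\varDelta_\delta)$. The vanishing probability above then kills this term. On $\{N_s\ge2\}$, condition on one of the big jumps, say the last one, and use the same local bound. This gives at most $C\,s\,\overline\nu(\gamma x)\cdot s\,\nu(x+\varDelta_\delta)$, which is $o(s\,\nu(x+\varDelta_\delta))$ since $s\overline\nu(\gamma x)\to0$. Keeping the localisation here is essential: the crude bound $s^2\overline\nu^2$ is not negligible when $\alpha<2$.

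\emph{Expected main obstacle, $N_s=0$.} Here one needs $\mathbf P(Y^{(\gamma x)}_s>x/2)=o\big(s\,\overline\nu(x)\,\delta_0/x\big)$ uniformly in $s\le x/\lambda$, including $s\to0$. I would use a Fuk--Nagaev / exponential-Chebyshev bound for Lévy processes with jumps truncated at $\gamma x$, of the form
\[
\mathbf P\big(Y^{(\gamma x)}_s>x/2\big)\le \big(C\,s\,\overline\nu(\gamma x)\big)^{1/(2\gamma)}+\exp\Big(-\frac{c\,x^2}{s\,\sigma^2_{\gamma x}}\Big).
\]
Choosing $\gamma$ small makes the exponent $1/(2\gamma)$ large. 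The polynomial term is then $O\big((s\overline\nu(x))^{k}\big)$ with $k\ge2$, which beats $s\overline\nu(x)/x$ because $s\overline\nu(x)\,x^{1/(k-1)}\lesssim x^{1+1/(k-1)-\alpha}\to0$ once $k$ is large enough. The Gaussian part is superpolynomially small in $x$, uniformly in $s$. The delicate point is handling the non-negligible case $\alpha<2$ and the small-$s$ regime at the same time. If the Fuk--Nagaev route is unwieldy, I would instead invoke directly the general uniform local large-deviation result of \cite{WuXu2026a}, of which this lemma is stated to be Corollary~1.4.
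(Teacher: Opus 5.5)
The paper gives no proof of this lemma. It imports it directly from Corollary~1.4 of \cite{WuXu2026a}, which is exactly your fallback. Your one-big-jump argument is therefore a genuinely different, self-contained route, and its structure is sound.

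The $N_s=1$ and $N_s\ge 2$ pieces follow from the compensation (Mecke) formula together with the local bound $\sup_{u\ge\gamma x}\nu(u+\varDelta_\delta)\le C_\gamma\,\nu(x+\varDelta_\delta)$, uniformly in $\delta\in[\delta_0,\infty]$. These are the same mechanics as in Propositions~\ref{Prop.203} and~\ref{Prop.403}. You are also right that the localised bound $C s\,\overline\nu(\gamma x)\cdot s\,\nu(x+\varDelta_\delta)$, rather than $s^2\overline\nu(\gamma x)^2$, is what makes the double-jump term negligible when $\alpha<2$ and $s\asymp x$. The no-big-jump term plays the role of Proposition~\ref{Prop.401}, and there your sketch needs tightening in two places.

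\emph{Negative jumps.} Only the positive jumps are truncated, and \eqref{eqn.LevyMeasure} does not give the negative jumps finite variance. So the Chebyshev alternative in the main term is unavailable; keep the law-of-large-numbers argument. For the same reason the negative jumps cannot sit inside $\sigma^2_{\gamma x}$ in your Fuk--Nagaev display, so split them off. The Brownian part plus the compensated negative jumps form a martingale $Z$ with $\log\mathbf E[e^{\theta Z_1}]\le C(\theta+\theta^2)$ for all $\theta\ge0$, and this is $o(\theta)$ as $\theta\to0$. Hence $\mathbf P(Z_s>\epsilon x)$ is controlled by an exponential Chebyshev bound, taking $\theta\asymp x/s$ for $s\le cx$ and a fixed small $\theta$ for $cx\le s\le x/\lambda$.

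\emph{Small $s$.} The target $s\,\overline\nu(x)/x$ vanishes linearly as $s\to0$, so being superpolynomially small in $x$ uniformly in $s$ is not enough. Every piece of the $N_s=0$ bound must be $o(s\,\overline\nu(x)/x)$ uniformly in $s\in(0,x/\lambda]$. This does hold for the bounds just described:
\begin{itemize}
\item the exponential terms decay like $\exp(-c\,x^{\rho}/s)$ for some $\rho>1$;
\item the Fuk--Nagaev polynomial term carries a power of $s$ strictly larger than one once $\gamma$ is small.
\end{itemize}
Your form $(Cs\,\overline\nu(\gamma x))^{1/(2\gamma)}$ is essentially what the second-moment version gives for $\alpha<2$. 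For $\alpha\ge 2$ that term is weaker but still sufficient for small $\gamma$. Still, this has to be checked rather than asserted.

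With these points settled your proof is complete. The citation buys brevity and a statement already uniform in $s\in(0,t]$ and $\delta\in[\delta_0,\infty]$. Your route makes the mechanism explicit using only tools the paper already uses elsewhere.
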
  
 
   \begin{proposition}\label{Prop.206}
  	There exists a constant $C>0$ such that for all $t\geq 0$, 
  	\beqnn
  	\mathbf{E}\bigg[ \sup_{s\in[0,t]} \big|X_s\big| \bigg] \leq C\cdot \big( \sqrt{t} +t  \big).
  	\eeqnn
  	% Moreover, for any $t_0>0$, the family $\{|X_t/t| \}_{t\geq t_0}$ is uniformly integrable.
  \end{proposition}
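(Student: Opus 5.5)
We need to prove Proposition \ref{Prop.206}: $\mathbf E[\sup_{s\le t}|X_s|]\le C(\sqrt t+t)$ for all $t\ge0$.

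The plan is to use the L\'evy--It\^o decomposition and bound each piece separately. Thanks to \eqref{eqn.LevyMeasure}, $X$ has a finite first moment. Centering the small and large jumps together, we write
\[
X_t=-\beta t+\sigma B_t+M_t ,
\qquad
M_t:=\int_0^t\int_{\mathbb R\setminus\{0\}} y\,\tilde N(ds,dy),
\]
where $\tilde N$ is the compensated jump measure. We then split the jump part as $M=M^{(1)}+M^{(2)}$, with $M^{(1)}$ collecting the compensated jumps with $|y|\le1$ and $M^{(2)}$ those with $|y|>1$. Since
\[
\sup_{s\le t}|X_s|\le \beta t+\sigma\sup_{s\le t}|B_s|+\sup_{s\le t}|M^{(1)}_s|+\sup_{s\le t}|M^{(2)}_s|,
\]
it suffices to bound the expectation of each term by a constant times $\sqrt t+t$.

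\textbf{Drift and Brownian parts.} The drift contributes exactly $\beta t$. For the Brownian part, the reflection principle (or Doob's $L^2$ inequality) gives $\mathbf E[\sup_{s\le t}|B_s|]\le 2\sqrt t$.

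\textbf{Small jumps.} $M^{(1)}$ is a square-integrable martingale with
\[
\mathbf E\big[(M^{(1)}_t)^2\big]=t\int_{|y|\le1}y^2\,\nu(dy),
\]
which is finite by \eqref{eqn.LevyMeasure}. Cauchy--Schwarz followed by Doob's $L^2$ maximal inequality then gives
\[
\mathbf E\Big[\sup_{s\le t}|M^{(1)}_s|\Big]\le 2\Big(t\int_{|y|\le1}y^2\,\nu(dy)\Big)^{1/2}.
\]

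\textbf{Large jumps.} We avoid any second moment here. Writing $M^{(2)}$ as its uncompensated jump sum minus its compensator, we bound
\[
\sup_{s\le t}|M^{(2)}_s|\le \sum_{s\le t}|\Delta X_s|\mathbf 1_{\{|\Delta X_s|>1\}}+t\int_{|y|>1}|y|\,\nu(dy).
\]
By the compensation formula \eqref{eqn.CompenstationX}, the expectation of the jump sum equals $t\int_{|y|>1}|y|\,\nu(dy)$, which is finite by \eqref{eqn.LevyMeasure}. Hence
\[
\mathbf E\Big[\sup_{s\le t}|M^{(2)}_s|\Big]\le 2t\int_{|y|>1}|y|\,\nu(dy).
\]

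Summing the four bounds gives the claim, with
\[
C=\beta+2\sigma+2\Big(\int_{|y|\le1}y^2\,\nu(dy)\Big)^{1/2}+2\int_{|y|>1}|y|\,\nu(dy).
\]

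There is no real obstacle. The only point needing care is that the large jumps need not have a finite second moment (indeed $\alpha$ may lie in $(1,2)$), so the $L^2$ maximal inequality cannot be applied to the whole jump part. Splitting at $|y|=1$ and handling $M^{(2)}$ through its first moment is what resolves this.
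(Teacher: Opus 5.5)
Your proof is correct and follows essentially the same route as the paper. Both use the L\'evy--It\^o split at $|y|=1$, an $L^2$ maximal bound for the Brownian and small-jump martingale part (you use Doob's inequality where the paper uses Jensen plus Burkholder--Davis--Gundy), and a first-moment bound on the uncompensated large-jump sum plus its compensator; your version additionally gives an explicit constant.
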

  \proof By the L\'evy-It\^o decomposition, we have $X_t= -\beta t +Y_t+M_t$ with
  \beqnn
  Y_t:=  \sigma B_t+ \int_0^t \int_{|y|\leq 1} y \,\widetilde{N}(ds,dy)
  \quad \mbox{and}\quad 
  M_t:=   \int_0^t \int_{|y|>1} y \widetilde{N}(ds,dy),\quad t\geq 0.
  \eeqnn 
  Here $B$ is a standard Brownian motion and $\widetilde{N}(ds,dy)$ is a compensated Poisson random measure on $(0,\infty)\times \mathbb{R}$ with intensity $ds\,\nu(dy)$. 
  By the triangle inequality, 
  \beqnn
  \mathbf{E}\bigg[ \sup_{s\in[0,t]} \big|X_s\big| \bigg] \leq \beta t+ 	\mathbf{E}\bigg[ \sup_{s\in[0,t]} \big|Y_s\big| \bigg] + 	\mathbf{E}\bigg[ \sup_{s\in[0,t]} \big|M_s\big| \bigg] .
  \eeqnn  
  By using Jensen's inequality, then the Burkholder–Davis–Gundy inequality and finally \eqref{eqn.LevyMeasure}, there exists a constant $C>0$ such that for all $t\geq 0$, 
  \beqlb\label{eqn.3044}
  \mathbf{E}\Big[ \sup_{r\in[0,t]}  |Y_r | \Big] 
  \ar\leq\ar \bigg(\mathbf{E}\Big[ \sup_{r\in[0,t]}  |Y_r |^2 \Big] \bigg)^{1/2}
  \leq C\cdot \Big( \sigma^2 + \int_{|y|\leq 1} |y|^2 \,\nu(dy) \Big)^{1/2}\cdot \sqrt{t}  \leq C\cdot \sqrt{t}. 
  \eeqlb
  Additionally, by the definition of $\widetilde{N}(ds,dy)$ and then using \eqref{eqn.LevyMeasure} again, we also have uniformly in $t\geq 0$, 
  \beqnn
  \sup_{r\leq t}\big|M_r\big| \leq   \int_{|y|>1} |y|\,\nu(dy) \cdot  t+\int_0^t \int_{|y|>1} |y| \,N(ds,dy) 
  \quad \mbox{and then}\quad
  \mathbf{E}\Big[ \sup_{r\in[0,t]}  |M_r | \Big] \leq C\cdot t,
  \eeqnn
  The desired uniform upper bound follows by putting these two estimates together. 
  % 	For the second claim, by the triangle inequality we have 
  % 	\beqnn
  % 	\mathbf{E}\big[\big|X_t/t+\beta\big|\big] 
  % 	\ar\leq\ar  \mathbf{E}\big[\big|Y_t/t\big|\big]  +  \mathbf{E}\big[\big|M_t/t\big|\big] .
  % 	\eeqnn
  % 	By \eqref{eqn.3044}, we have $ \mathbf{E}\big[\big|Y_t/t\big|\big]  \leq C/\sqrt{t} \to 0$ as $t\to\infty$. 
  % 	Moreover, for any $K>0$, we also have 
  % 	\beqnn
  % 	\mathbf{E}\big[\big|M_t/t\big|\big] 
  % 	\ar\leq\ar  \mathbf{E}\Big[ \frac{1}{t}\Big| \int_0^t \int_{1<|y|\leq K}y \widetilde{N}(ds,dy) \Big| \Big] +  \mathbf{E}\Big[ \frac{1}{t}\Big| \int_0^t \int_{|y|> K}y \widetilde{N}(ds,dy) \Big| \Big]\cr
  % 	\ar\leq\ar  \bigg(\frac{1}{t^2}\cdot \mathbf{E}\Big[ \Big| \int_0^t \int_{1<|y|\leq K}y \widetilde{N}(ds,dy) \Big|^2 \Big]\bigg)^{1/2} +  \mathbf{E}\Big[ \frac{1}{t}\Big| \int_0^t \int_{|y|> K}y \widetilde{N}(ds,dy) \Big| \Big]\cr
  % 	\ar\leq\ar \Big(  \frac{1}{t}  \int_{1<|y|\leq K}y^2\nu(dy)  \Big)^{1/2} + 2\int_{|y|>K} |y| \nu(dy), 
  % 	\eeqnn   
  % 	which goes to $0$ as $t\to\infty$ and then $K\to\infty$. Combining these estimates together yields that $ \mathbf{E}\big[\big|X_t/t+\beta\big|\big]  \to 0$ as $t\to\infty$ and then $\{ |X_t/t| \}_{t\geq t_0}$ is uniformly integrable. 
  \qed 
  \begin{lemma}\label{Lem5.1}
  	We have $\overline{n} (\epsilon_t)=\overline{n} (\epsilon_t, \zeta>t )<\infty$ for any $t\geq 0$. 
  \end{lemma}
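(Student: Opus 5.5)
The plan is to get finiteness of $t\mapsto\overline n(\epsilon_t,\zeta>t)$ first in a Laplace-transformed (time-integrated) form from excursion theory, and then upgrade it to every $t$ with the Markov property under $\overline n$ and Proposition~\ref{Prop.206}. Recall that under $\overline n$ the canonical process is Markov with the semigroup of $\widehat X=-X$ killed on entering $(-\infty,0]$; this is the analogue of \eqref{eqn.n-Markov}.

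\emph{Step 1 (integrated bound).} Fix $q>0$ and let $L$ be the local time at $0$ of $\overline X-X$. Apply the exit-system (Maisonneuve) formula for the excursions of $\overline X-X$ to the function $f(y)=y$; the term carrying the drift of $L^{-1}$ multiplies $f(0)=0$ and so drops out. This gives
\beqnn
\mathbf E\Big[\int_0^\infty e^{-qt}\big(\overline X_t-X_t\big)\,dt\Big]
=\mathbf E\Big[\int_0^\infty e^{-qs}\,dL_s\Big]\cdot\int_0^\infty e^{-qr}\,\overline n\big(\epsilon_r,\zeta>r\big)\,dr .
\eeqnn
Since $0\le\overline X_t-X_t\le 2\sup_{s\le t}|X_s|$, Proposition~\ref{Prop.206} shows that the left-hand side is at most $2C\int_0^\infty e^{-qt}(\sqrt t+t)\,dt<\infty$. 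The factor $\mathbf E[\int_0^\infty e^{-qs}dL_s]$ is strictly positive. Hence $\int_0^\infty e^{-qr}\,\overline n(\epsilon_r,\zeta>r)\,dr<\infty$, so $\overline n(\epsilon_r,\zeta>r)<\infty$ for Lebesgue-a.e.\ $r>0$.

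\emph{Step 2 (every $t>0$).} Fix $t>0$ and choose $s\in(0,t)$ with $\overline n(\epsilon_s,\zeta>s)<\infty$; Step~1 provides such an $s$. By the Markov property under $\overline n$,
\beqnn
\overline n(\epsilon_t,\zeta>t)=\overline n\Big(\mathbf E_{-\epsilon_s}\big[-X_{t-s};\,\tau_0^+>t-s\big],\,\zeta>s\Big).
\eeqnn
Bounding $|X_{t-s}|$ by the starting point plus $\sup_{r\le t-s}|X_r-X_0|$ and using Proposition~\ref{Prop.206}, this is at most
\beqnn
\overline n(\epsilon_s,\zeta>s)+C\big(\sqrt{t-s}+t-s\big)\,\overline n(\zeta>s).
\eeqnn
This is finite because $\overline n(\zeta>s)<\infty$ for every $s>0$, a standard property of excursion measures.

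\emph{Step 3 ($t=0$).} If $0$ is regular for $(-\infty,0)$, for $X$, then excursions of $\overline X-X$ start continuously, so $\epsilon_0=0$ $\overline n$-a.e.\ and the claim is trivial. Otherwise $X$ has bounded variation. In that case excursions begin either continuously, contributing $\epsilon_0=0$, or by a negative jump from the supremum. For the jump part, $\overline n(\epsilon_0\in dy)$ is a constant multiple of $\nu(-dy)$ on $(0,\infty)$ (this is the compensation formula \eqref{eqn.CompenstationX} applied at ladder times). Hence $\overline n(\epsilon_0)\le c\int_{(-\infty,0)}|y|\,\nu(dy)$, which is finite by bounded variation together with \eqref{eqn.LevyMeasure}.

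I expect the main obstacle to be Step~1, namely justifying the exit-system identity with the correct normalisation and the vanishing drift term. If that route is awkward, the fallback is to read off the needed integrability from \eqref{eqn.447}-type relations for $\overline n$. The only point needing care in the a.e.-to-everywhere step is the finiteness of $\overline n(\zeta>s)$ for $s>0$.
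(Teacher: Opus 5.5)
Your Step~1 is exactly the paper's proof. The paper quotes the identity $\int_0^\infty e^{-qt}\,\overline{n}(\epsilon_t,\zeta>t)\,dt=\kappa(q,0)\,\mathbf{E}\big[\int_0^\infty e^{-qt}(\overline{X}_t-X_t)\,dt\big]$ from Bertoin, which is your exit-system formula with normalising factor $1/\kappa(q,0)$, and bounds the right-hand side with Proposition~\ref{Prop.206}.

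The paper stops there, even though a finite Laplace transform only gives finiteness for Lebesgue-a.e.\ $t>0$ and says nothing about $t=0$. Your Step~2 (the Markov property from an earlier good time $s$, together with $\overline{n}(\zeta>s)<\infty$) and your Step~3 (writing $\overline{n}(\epsilon_0\in dy)$ as a multiple of $\nu(-dy)$ on $(0,\infty)$ and using $\int|y|\,\nu(dy)<\infty$ in the bounded-variation case) correctly close both of these gaps.
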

  \proof For any $q> 0$, by using (7) in \cite[p.120]{Bertoin1995} and then Proposition~\ref{Prop.206} we have 
  \beqnn
  \int_{0}^{\infty}e^{-q t}\,\overline{n} (\epsilon_t,\zeta>t)\, dt
  \ar=\ar \kappa(q,0)\cdot\mathbf{E}\Big[\int_{0}^{\infty}e^{-qt}  (\overline{X}_t-X_t)dt\Big] 
  = \kappa(q,0)\cdot \int_{0}^{\infty}e^{-qt}  \big(\mathbf{E}\big[\overline{X}_t\big]+\beta t\big)dt,
  \eeqnn  
  which is finite because of Proposition~\ref{Prop.206} and then  the desired result follows. 
  \qed
 
 \begin{proposition}\label{Lemma.205}
 	For any $x\in \mathbb{R}$ and $\delta >0$, there exists a constant $C>0$ such that for all $t\geq 0$,
 	\beqlb\label{eqn.2051}
 	\mathbf{P}\big( X_t\in x+\varDelta_\delta\big)\leq C\cdot  \big(\overline{\nu}(\beta t) \wedge 1\big).
 	\eeqlb
 \end{proposition}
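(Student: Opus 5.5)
Fix $x\in\mathbb R$ and $\delta>0$. The bound is trivially true with $C=1$ on any bounded time interval $[0,t_0]$, because $\overline\nu(\beta t)\wedge 1\geq \overline\nu(\beta t_0)\wedge 1>0$ there (here $\overline\nu>0$ everywhere by Condition~\ref{Main.Condition}). So it suffices to obtain $\mathbf P(X_t\in x+\varDelta_\delta)\leq C\,\overline\nu(\beta t)$ for all large $t$. To do this we rewrite the event in the centred variable:
\[
\{X_t\in x+\varDelta_\delta\}=\{X_t+\beta t\in \beta t+x+\varDelta_\delta\},
\]
and apply Lemma~\ref{Lemma.203}.

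In that lemma we take $s=t$, $\lambda=\beta/2$, $\delta_0=\delta$, $\vartheta\equiv |x|$ (constant, hence $\vartheta_z/z\to0$), and replace its spatial variable by $z=\beta t$ and its shift by $y=-x$. For $t$ large we have $z=\beta t\geq \lambda t$ and $|y|=|x|\leq\vartheta_z$. The lemma then gives, as $t\to\infty$,
\[
\mathbf P\big(X_t+\beta t\in \beta t+x+\varDelta_\delta\big)\sim t\cdot \nu\big(\beta t+\varDelta_\delta\big).
\]
Hence for $t\geq t_0$ the left-hand side is at most $2t\,\nu(\beta t+\varDelta_\delta)$.

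Next we convert this into the tail. By Remark~\ref{Remark01}, $\nu(\beta t+\varDelta_\delta)\sim \alpha\delta\,\overline\nu(\beta t)/(\beta t)$. Therefore
\[
t\,\nu(\beta t+\varDelta_\delta)\leq \frac{2\alpha\delta}{\beta}\,\overline\nu(\beta t)
\]
for $t$ large. Enlarging $t_0$ if necessary, we get $\mathbf P(X_t\in x+\varDelta_\delta)\leq (4\alpha\delta/\beta)\,\overline\nu(\beta t)$ for $t\geq t_0$. Combining this with the trivial bound on $[0,t_0]$ yields \eqref{eqn.2051} with
\[
C:=\max\Big\{4\alpha\delta/\beta,\ 1/(\overline\nu(\beta t_0)\wedge1)\Big\}.
\]

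The only delicate point is checking that Lemma~\ref{Lemma.203} really applies in this regime. Its supremum is over spatial levels of order $t$ with bounded shifts, and here the level is exactly $\beta t$ with a fixed shift $x$, so it fits. If the sign convention of the shift caused trouble, one could instead take $\vartheta\equiv|x|+\delta$ and write the interval as $\beta t+x+\varDelta_\delta$ directly, which again corresponds to level $\beta t$ and a bounded shift.
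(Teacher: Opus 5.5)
Your argument is correct and is essentially the paper's: apply Lemma~\ref{Lemma.203} with $\lambda=\beta/2$ at a level of order $\beta t$, convert $t\,\nu(\beta t+\varDelta_\delta)$ into a multiple of $\overline{\nu}(\beta t)$ via \eqref{eqn.30}, and handle bounded $t$ by the monotonicity of $\overline{\nu}$. On $[0,t_0]$ the constant is $1/(\overline{\nu}(\beta t_0)\wedge 1)$ rather than $1$, as your final choice of $C$ already reflects. The only difference is that the paper first reduces to integer $x,\delta$ and splits into unit subintervals. Your direct use of the shift $y=-x$ (or equivalently the level $\beta t+x\geq\beta t/2$ with zero shift) shows that this reduction is unnecessary for fixed $x$ and $\delta$.
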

 \proof Note that $\mathbf{P}\big( X_t\in x+\varDelta_\delta\big) \leq \mathbf{P}\big( X_t\in [x]-1+\varDelta_{[\delta]+3}\big)$ for all $x\in \mathbb{R}$, it suffices to prove \eqref{eqn.2051} with $x,\delta\in \mathbb{Z}$. 
 By the monotonicity of $\overline{\nu}$, it suffices to prove that the desired uniform upper bound holds for all large $t$.  
 Firstly, we have 
 \beqlb\label{eqn.213}
 \frac{\mathbf{P}\big( X_t\in x+\varDelta_\delta\big)}{\overline{\nu}(\beta t)}
 \ar=\ar  \frac{t }{\overline{\nu}(\beta t)} \sum_{k=x}^{x+\delta-1} \frac{\mathbf{P}(X_t\in  k+\varDelta_1)}{t\cdot\nu(\beta t+k+\varDelta_1)}\cdot   \nu\big(\beta t+k+\varDelta_1\big) \cr
 \ar\leq\ar  \frac{t  }{\overline{\nu}(\beta t)} \cdot   \nu\big(\beta t+x+\varDelta_{\delta}\big) \cdot\sup_{k=x,\cdots,x+\delta-1} \frac{\mathbf{P}(X_t\in  k+\varDelta_1)}{t\cdot\nu(\beta t+k+\varDelta_1)} .
 \eeqlb
 By Lemma~\ref{Lemma.203} with $\lambda=\beta/2$,   there exist two constants $C,t_0>0$  such that for all $t\geq t_0$,
 \beqnn
  \sup_{k=x,\cdots,x+\delta-1} \frac{\mathbf{P}(X_t\in  k+\varDelta_1)}{t\cdot\nu(\beta t+k+\varDelta_1)}
 \leq    \sup_{z\geq \beta t/2}\frac{\mathbf{P}(X_t+\beta t\in z+\varDelta_1)}{t\cdot \nu\big(z+\varDelta_1\big)}  \leq C. 
 \eeqnn
 Moreover, by \eqref{eqn.30} and enlarging $t_0$ if needed, we obtain that for all $t\geq t_0$,
 \beqnn
 \frac{t  }{\overline{\nu}(\beta t)} \cdot   \nu\big(\beta t+x+\varDelta_{\delta}\big)
 \leq   \frac{2\alpha}{\beta} \cdot (1+\delta) . 
 \eeqnn 
 Then \eqref{eqn.2051} follows by taking these two estimates back into the right-hand side of \eqref{eqn.213}. 
 \qed

 \begin{proposition}\label{Lemma.207}
 For any $x,y>0$, we have  as $t\to \infty$,
 \beqnn
 \frac{\beta t}{\overline{\nu}(\beta t)}\cdot  \mathbf{P}_{-x}\big(X_t\in[-y,0),\tau_{0}^{+}>t\big)
 \to \alpha\cdot  V(x)\int_{0}^{y}\widehat{V}(z)dz .
 \eeqnn
 \end{proposition}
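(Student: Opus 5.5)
The plan is a one-big-jump decomposition. On the event $\{X_t\in[-y,0),\,\tau_0^+>t\}$ under $\mathbf P_{-x}$, the path has drift $-\beta$ and must still be within distance $y$ of $0$ at the large time $t$. So, apart from an event of probability $o(\overline\nu(\beta t)/t)$, it makes exactly one jump of size comparable to $\beta t$, without crossing $0$. Where that jump sits in time should produce the factors. The stretch before the jump lives below $0$ for a time of order one, and its total weight is governed by the ascending ladder structure started from $-x$; this gives $V(x)$. After the jump the path drifts down from a level of order $-\beta t$ to $[-y,0)$. Read backwards in time as a path of the dual $\widehat X$ started near $0$ that must stay positive, it is governed by the descending ladder structure and gives $\widehat V(z)$, integrated over $z\in(0,y]$. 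The density of the landing position of the big jump contributes $\alpha\,\overline\nu(\beta t)/(\beta t)$ through Condition~\ref{Main.Condition} and \eqref{eqn.30}.

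\textbf{Step 1 (exact representation).} Apply the compensation formula \eqref{eqn.CompenstationX} to the last jump of size exceeding $\eta t$, for a small $\eta>0$. Write $G_{-x}(da):=\int_0^\infty \mathbf P_{-x}(X_s\in da,\,\tau_0^+>s)\,ds$. The main term then reads
\[
\int_0^t ds\int \mathbf P_{-x}(X_{s}\in da,\tau_0^+>s)\int_{\eta t}^{-a}\nu(dj)\,\mathbf P_{a+j}\big(X_{t-s}\in[-y,0),\,\tau_0^+>t-s,\ \text{no jump}>\eta t\big).
\]
The discarded pieces are handled as follows. Paths with two jumps larger than $\eta t$ contribute $O(t^2\overline\nu(\eta t)^2)=o(\overline\nu(\beta t)/t)$, since $\alpha>1$; this is the argument of Proposition~\ref{Prop.203} adapted to $X$. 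Paths with no jump larger than $\eta t$ are handled by a truncated-moment (Fuk--Nagaev type) bound, because reaching $[-y,0)$ from below would require the small-jump part to exceed $\beta t/2$.

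\textbf{Step 2 (evaluate the main term).} I would show that only $s=O(1)$ and $a=O(1)$ contribute. Large $s$ is negligible because $\mathbf P_{-x}(X_s\in da)$ drifts away linearly, while the subsequent jump must land within $O(1)$ of $-\beta(t-s)$.

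For $a+j=-w$ with $w$ of order $\beta(t-s)$, I would reverse time on the post-jump stretch, using duality of $X$ and $\widehat X$ under Lebesgue measure. This identifies $\int dw\,\mathbf P_{-w}(X_r\in -dz,\ \overline X_r<0)$, localised near $w\approx\beta r$, with $\widehat{\mathbf P}_z$-probabilities that $\widehat X$ stays positive and ends near $\beta r$. Integrating over the endpoint $w$ against the asymptotically flat jump density $\nu(dj)\approx\alpha\,\overline\nu(\beta t)/(\beta t)\,dj$ (from \eqref{eqn.30}) turns this into $\widehat{\mathbf P}_z(\text{stay positive up to } r)$-type quantities integrated over time. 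After summing over $r$, the identity \eqref{eqn.447} for $\widehat V$ converts these into $\widehat V(z)$.

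Symmetrically, the pre-jump factor $\int G_{-x}(da)$, restricted to the relevant levels, should be identified with $V(x)$ through \eqref{eqn.2011}. To do this I would use the Wiener--Hopf factorisation of the killed Green measure, which splits it into the ascending part $U$ and the descending part $\widehat U$. The flatness of the jump density lets the $\widehat U$-part be absorbed into the time-reversed endpoint computation above.

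\textbf{Main obstacle.} The hard part will be uniformity. I need dominated convergence for the interchange of the limit $t\to\infty$ with the integrals over $s$, $a$ and the landing point, and I must rule out contributions from jumps at intermediate times $s\asymp t$ or landing levels $a$ of order $t$. For this I plan to use the uniform local estimate of Lemma~\ref{Lemma.203} (with a shift $|y|\le\vartheta_x$), the crude bound of Proposition~\ref{Lemma.205} for the post-jump transition probabilities, and the finiteness $\int G_{-x}(da)\,(1+|a|)<\infty$, which follows from the negative drift and Proposition~\ref{Prop.206}.
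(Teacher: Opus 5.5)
There is a genuine gap: the one-big-jump picture you describe is reversed in time, and Step~2 is built on it. Under $\mathbf P_{-x}$ the path drifts down at rate $\beta$ and must stay at or below $0$. A path that ``drifts down from a level of order $-\beta t$'' cannot end in $[-y,0)$. After a positive jump landing at $-w\le 0$ the path keeps drifting down, so it can lie in $[-y,0)$ at time $t$ only if the jump occurs at time $t-O(1)$ and lands $O(1)$ below $0$.

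Your own Step~1 formula already shows this. The $j$-integral runs over $(\eta t,-a]$, which is empty unless $a<-\eta t$. So the region $s=O(1)$, $a=O(1)$ that you call the main term contributes exactly zero. The region you discard ($s$ close to $t$, $a\approx-\beta t$) carries all the mass.

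The duality step has the matching sign error. Reversing time on the post-jump stretch and reflecting gives a path with the law of $X$ (drift $-\beta$) started at $z$, not of $\widehat X$. So a post-jump start at $-w\approx-\beta r$ is itself a large deviation. Moreover, the time integral of the probability that $\widehat X$ stays positive is infinite, so it could not produce $\widehat V(z)$.

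The correct bookkeeping, with $r=t-s$ and $a+j=-w$, has three factors:
\begin{itemize}
\item the jump density near $\beta t$ gives $\alpha\overline\nu(\beta t)/(\beta t)$ by \eqref{eqn.30};
\item the pre-jump factor is the survival probability of a stretch of length about $t$, namely $\mathbf P_{-x}(\tau_0^+>t-r)\to\mathbf P(\overline X_\infty\le x)=V(x)/\underline n(\zeta)$ (cf.\ \eqref{eqn.4015}), not a Green-measure integral over levels $a=O(1)$;
\item the post-jump factor is, by duality and \eqref{Asytau},
\[
\int_0^\infty dr\int_0^\infty \mathbf P_{-w}\big(X_r\in[-y,0),\,\tau_0^+>r\big)\,dw=\int_0^y\mathbf E_z\big[\tau_0^-\big]\,dz=\underline n(\zeta)\int_0^y\widehat V(z)\,dz .
\]
\end{itemize}
The product of these three factors is the claimed limit.

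Two auxiliary claims are also false.
\begin{itemize}
\item The bound $\int G_{-x}(da)(1+|a|)<\infty$ fails even without the weight. Indeed $G_{-x}(\mathbb R)=\mathbf E_{-x}[\tau_0^+]=\infty$, because $\mathbf P_{-x}(\tau_0^+=\infty)=V(x)/\underline n(\zeta)>0$. So your dominated-convergence scheme has no dominating function. In the correct picture, domination over $r\in[0,t]$ comes instead from a uniform-in-$a$ bound
\[
\int_{\eta t}^\infty\nu(dj)\,\mathbf P_{a+j}\big(X_r\in[-y,0),\,\tau_0^+>r\big)\le C\,\frac{\overline\nu(\beta t)}{\beta t}\,\mathbf P\big(-\underline X_r\le y+1\big).
\]
You get this by discretising as in Proposition~\ref{Prop.403} and then using duality. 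Its right side is integrable in $r$, with integral at most $C\,\frac{\overline\nu(\beta t)}{\beta t}\,\mathbf E_{y+1}[\tau_0^-]$.
\item Your two-big-jump estimate is wrong. The analogue of Proposition~\ref{Prop.203} for $X$ carries $\mathbf E_{-x}[\tau_0^+\wedge t]\asymp t$ instead of $\underline n(\zeta)$. Then $t^2\overline\nu(\eta t)^2=o(\overline\nu(\beta t)/t)$ holds in general only for $\alpha>3$. Fortunately the last-jump representation does not need this estimate, since the pre-jump factor may contain earlier big jumps.
\end{itemize}

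Repaired along these lines, your route would be a self-contained alternative to the paper's proof. The paper does no path decomposition here. It imports the Laplace-transform asymptotics of Theorem~3.15 in \cite{Xu2021a} and inverts them via the extended continuity theorem.
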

 \proof 
 By applying Theorem 3.15 in \cite{Xu2021a} and the extended continuity theorem for Laplace transforms (see Theorem XIII.1.2 in \cite[p.413]{Feller1971}), along with  the boundedness of $V(x)$ and $\widehat{V}(x)$ on each finite interval of the non-negative semi-axis, we obtain the desired result.
 \qed
 
 \begin{proposition}\label{Lemma.208}
 	For any $x>0$, there exist two constants $C,t_0>0$ such that for all $t\geq t_0$, 
 	\beqlb\label{eqn.214}
 	\sup_{y\geq 0} \mathbf{P}_{-y}\big(X_t\in(-x,0],\, 	\tau_0^+>t \big)  \leq C\cdot \frac{\overline\nu(\beta t)}{t}. 
 	\eeqlb
 \end{proposition}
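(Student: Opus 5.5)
We need a bound on $\sup_{y\ge0}\mathbf P_{-y}(X_t\in(-x,0],\tau_0^+>t)$ of order $\overline\nu(\beta t)/t$. The plan is to decompose at the time of the running supremum and reduce the problem to the two entrance laws, using the local bound of Proposition~\ref{Lemma.205} for the long piece.

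\textbf{Step 1: splitting at the supremum.} Under $\mathbf P_{-y}$, on $\{\tau_0^+>t\}$ we have $\overline X_t<0$. Splitting the path at the last time $g_t$ the supremum is attained (equivalently, using the excursion theory of $\overline X-X$), we obtain for $u\in(-x,0]$ the analogue of \eqref{eqn2.9} for the dual:
\beqnn
\mathbf P_{-y}\big(X_t\in du,\tau_0^+>t\big)
=\int_0^t \int_{[0,y)} \mathbf P\big(\overline X_s\in dz,\ \overline X_s=X_s\ \text{at } g_s = s\big)\text{-type term}\cdot \overline n_{t-s}(\cdots)+\ \text{drift terms}.
\eeqnn
Concretely, the time-$t$ position is $-y+H$ minus an excursion value, where $H$ is the ascending ladder height at the last ladder time before $t$. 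This gives the bound
\beqnn
\mathbf P_{-y}\big(X_t\in(-x,0],\tau_0^+>t\big)\le \int_{[0,t]\times[0,y)} U(ds,dz)\, \overline n_{t-s}\big(\epsilon_{t-s}\in [z-y, z-y+x)\big) + \mathtt d\,(\cdots),
\eeqnn
where the window has length $x$ and sits at height $y-z\ge0$. The drift term is controlled the same way.

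\textbf{Step 2: uniform bounds on $\overline n_r$.} We need two estimates on $\overline n_r(w+\varDelta_x)$, uniform in $w\ge0$:
\begin{itemize}
\item for large $r$, $\overline n_r(w+\varDelta_x)\le C\,\overline\nu(\beta r)/r$;
\item for all $r$, $\overline n_r(w+\varDelta_x)\le C\,\overline n(\zeta>r)$, with $\int_0^\infty \overline n(\zeta>r)\,dr$ controlled via Lemma~\ref{Lem5.1} and \eqref{eqn.447}.
\end{itemize}
To obtain the first, we write $\overline n_r$ using the Markov property \eqref{eqn.n-Markov} at time $r/2$. This gives $\overline n_{r/2}(dv)$ times a killed transition probability $\mathbf P_{-v}(X_{r/2}\in\cdot)$ over a length-$x$ window. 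That window probability is bounded, via Proposition~\ref{Lemma.205} (dropping the killing), by $C\overline\nu(\beta r/2)$, uniformly in the window location. The factor $1/r$ then needs $\overline n(\zeta>r/2)\lesssim 1/r$, which is too weak. So instead we use the time-reversal (duality) identity
\beqnn
\overline n_r(dw)\,dr\ \text{related to}\ \mathbf P(\widehat{\text{ladder}})\ \text{potentials},
\eeqnn
namely $\int_0^\infty \overline n_r(dw)\,dr = \widehat V(dw)$-type identities. Combining these with the pointwise Proposition~\ref{Lemma.207} is the delicate part.

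\textbf{Step 3: splitting the time integral.} We split $\int_0^t$ into $s\le t/2$ and $s>t/2$.
\begin{itemize}
\item For $s\le t/2$, we have $t-s\ge t/2$. Using the first bound from Step 2 together with $U([0,\infty),[0,\infty))=V(\infty)<\infty$ (from \eqref{eqn.2011}, since $X$ drifts to $-\infty$) gives $C\overline\nu(\beta t)/t$ by regular variation.
\item For $s>t/2$, we bound $U((t/2,t],dz)$ by $\mathbf P(L^{-1}\text{ reaches beyond } t/2)$-type tails, which decay like $\overline\nu(\beta t)$ times a $1/t$ gain from the local nature of the window. Here we use $\underline n(\zeta>t/2)$ asymptotics from \eqref{Asytau}, together with local window smallness from the $\overline n$ integrability of Lemma~\ref{Lem5.1}.
\end{itemize}

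\textbf{Main obstacle.} The main obstacle is obtaining the uniform local bound $\sup_w \overline n_r(w+\varDelta_x)\le C\overline\nu(\beta r)/r$. I would first try to derive it from Proposition~\ref{Lemma.207} combined with monotone and regular-variation arguments to upgrade pointwise convergence to a uniform bound. Failing that, I would run a one-big-jump decomposition under $\overline n$ via \eqref{eqn.Compensation}: the jump of size about $\beta r$ has density $\alpha\overline\nu(\beta r)/(\beta r)$ by \eqref{eqn.30}, which provides the $1/r$ factor. Paths with no large jump would be handled by exponential-type bounds on the truncated process.
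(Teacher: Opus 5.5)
There is a genuine gap. Both estimates your argument rests on are uniform local bounds on entrance laws that carry the factor $1/t$, and neither is proved; each is at least as hard as the statement itself.

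\textbf{Step 2 is circular.} First, only the window at $0$ is needed. In Step~1 the window $[z-y,z-y+x)$ sits at $z-y\le 0$, not at $y-z$, so only ladder heights $z\in(y-x,y]$ and excursion values in $[0,x)$ contribute. But the bound $\overline n_r(\varDelta_x)\le C\,\overline\nu(\beta r)/r$ is a \emph{consequence} of the proposition you are proving. By the Markov property at time $1$, $\overline n_r(\varDelta_x)=\int_0^\infty\mathbf P_{-w}(X_{r-1}\in[-x,0),\tau_0^+>r-1)\,\overline n_1(dw)$. What is needed is a bound uniform in the starting point $-w$, which is exactly Proposition~\ref{Lemma.208}; this is how the paper derives Lemma~\ref{Lem5.4}. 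So your first fallback is circular: Proposition~\ref{Lemma.207} holds for a fixed starting point, and upgrading it to a supremum over an unbounded set of starting points is the whole difficulty. The factor $1/t$ genuinely comes from the killing, which forces the big jump into the last $O(1)$ units of time. Without killing, for $w\asymp t$, Lemma~\ref{Lemma.203} gives $\mathbf P_{-w}(X_t\in(-x,0])\asymp t\,\nu(\beta t+w+\varDelta_x)\asymp\overline\nu(\beta t)$. Your second fallback, a one-big-jump analysis under $\overline n$, would amount to an analysis comparable to Section~\ref{Sec.401} carried out for $\overline n$, and it is not done.

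\textbf{The $s>t/2$ half of Step 3 has the same defect on the other factor.} By the duality behind \eqref{eqn.2011}, $U(ds,dz)=\underline n_s(dz)\,ds$. So you need $\sup_{y\ge0}\underline n_s((y-x,y])\le C\,\overline\nu(\beta s)/s$ for $s\in(t/2,t]$, which is the upper half of Theorem~\ref{MainThm01}; the drift term $\overline{\tt d}\,\underline n_t((y-x,y])$ needs the same bound. A tail bound $\underline n(\zeta>t/2)\le C\,\overline\nu(\beta t)$ yields only $O(\overline\nu(\beta t))$. Lemma~\ref{Lem5.1} is a fixed-time first-moment bound and has no local content. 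Also, $\int_0^\infty\overline n(\zeta>r)\,dr=\infty$: since $X$ drifts to $-\infty$, the excursion of $\overline X-X$ after the ultimate supremum is infinite, so $\overline n(\zeta=\infty)>0$. The integrable quantity is $\int_0^\infty\overline n(\epsilon_r\le x,\zeta>r)\,dr\le\widehat V(x)$, from \eqref{eqn.447}.

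\textbf{What the paper does instead.} It avoids entrance laws altogether. It bounds the probability by $F_t(y,x)-F_t(y-x-1,x)$, where $F_t(u,v)=\mathbf P(\overline X_t\le u,\overline X_t-X_t\le v)$, and uses the identity \eqref{eqn.203}, whose explicit prefactor $1/t$ supplies the missing factor. After that only non-local inputs are needed:
\begin{itemize}
\item local bounds of order $\overline\nu(\beta t)$ for the free process $X_r$, $r\in[t/2,t]$, uniform in the window location (see \eqref{eqn.222}, from Lemma~\ref{Lemma.203}), when the free increment is long;
\item the tail bound $\mathbf P_x(\tau_0^->t/2)\le C\,\overline\nu(\beta t)$ from \eqref{Asytau}, when the free increment is short;
\item finiteness of $\int_0^\infty F_s\,ds=V\widehat V$ from \eqref{eqn.2031}, and of $\int_0^\infty\sup_u\mathbf P(X_s\in u+\varDelta_1)\,ds$.
\end{itemize}
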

 \proof Consider a three-parameter function $F_t(u,v) := \mathbf P\big( \overline{X}_t\leq u,\, \overline{X}_t-X_t\leq v \big)$ with $t\geq 0$ and $u,v\geq 0$. 
 By (3.32) and (3.38) in \cite{Xu2021a},  we have 
 \beqlb\label{eqn.2031}
 \int_0^\infty F_s(u,v)\,ds = V(u)\cdot \widehat{V}(v)
 \eeqlb
 and
 \beqlb\label{eqn.203}
 F_t(u,v) =  \frac{1}{t}	\int_0^t ds \int_0^u F_s(u-z,v) \, \mathbf P\big(X_{t-s}\in dz
 \big) 
 + \frac{1}{t}	\int_0^t ds \int_0^v  F_s(u,v-z) \, \mathbf P\big( -X_{t-s}\in dz
 \big). 
 \eeqlb 
 The spatial homogeneity of $X$ yields that
 \beqnn
 \mathbf P_{-y}\big( X_t\in(-x,0],\, \tau_0^+>t \big) = \mathbf P\big( X_t\in(y-x,y],\, \overline{X}_t\leq y \big).
 \eeqnn
 The fact $\big\{ X_t\in(y-x,y],\, \overline{X}_t\leq y \big\} \subset
 \big\{ \overline{X}_t \in (y-x-1,y],\, \overline{X}_t-X_t\leq x \big\}$ induces that 
 \beqnn
 \mathbf P_{-y}\big( X_t\in(-x,0],\, \tau_0^+>t \big) 
 \leq  F_t(y,x)-F_t(y-x-1,x) ,
 \eeqnn
 which, by \eqref{eqn.203}, can be decomposed into the following parts
 where 
 \beqlb
 I_1(t,y) \ar:=\ar   \frac{1}{t}\int_0^{t/2} ds 
 \int_0^y \big( F_s(y-z,x)-F_s(y-x-1-z,x) \big) \, 
 \mathbf P\big( X_{t-s}\in dz \big), \label{eqn.204} \\
 I_2(t,y) \ar:=\ar   \frac{1}{t}\int_{t/2}^t ds 
 \int_0^y \big( F_s(y-z,x)-F_s(y-x-1-z,x) \big) \, 
 \mathbf P\big( X_{t-s}\in dz \big), \label{eqn.205}\\
 I_3(t,y) \ar:=\ar  \frac{1}{t}\int_0^t ds \int_0^x \big( F_s(y,x-z)-F_s(y-x-1,x-z) \big) \,
 \mathbf P\big( -X_{t-s}\in dz \big) .  \label{eqn.206}
 \eeqlb
 Thus \eqref{eqn.214}   follows if we can prove that there exist constants $C,t_0>0$ such that for all $t\geq t_0$, 
 \beqlb\label{eqn.2201}
 \sup_{y\geq 0} I_i(t,y) \leq C\cdot \frac{\overline\nu(\beta t)}{t},
 \quad i=1,2,3.
 \eeqlb
 
 \textbf{Case $i=1$.}
 The increase of $F_s(u,v)$ in $u$ yields that
 \beqlb
 0\leq F_s(y-z,x)-F_s(y-x-1-z,x) \leq F_s\big(y-[z],x)-F_s(y-x-[z]-2,x\big) , 
 \eeqlb
 for all $z\in(0,y]$, which follows that
 \beqlb\label{eqn.207}
 \lefteqn{\int_0^y \big( F_s(y-z,x)-F_s(y-x-1-z,x) \big) \, 
 	\mathbf P\big( X_{t-s}\in dz \big)}\ar\ar\cr
 \ar\leq\ar \sum_{k=0}^{[y]}\int_k^{k+1} \big( F_s(y-z,x)-F_s(y-x-1-z,x) \big) \, 
 \mathbf P\big( X_{t-s}\in dz \big)\cr
 \ar\leq\ar  \sum_{k=0}^{[y]} \big( F_s(y-k,x)-F_s(y-x-k-2,x) \big) \cdot  
 \mathbf P\big( X_{t-s}\in k+\varDelta_1 \big)\cr
 \ar\leq\ar \sup_{u\geq 0} 	\mathbf P\big( X_{t-s}\in u+\varDelta_1  \big) \cdot \sum_{k=0}^{[y]} \big( F_s(y-k,x)-F_s(y-x-k-2,x) \big).
 \eeqlb
 Plugging this into the right-hand sides of \eqref{eqn.204}, 
 \beqlb\label{eqn.208}
 \lefteqn{\frac{I_{1}(t,y)}{\overline{\nu}(\beta t)} 
 	\leq \frac{1}{t}
 	\int_0^{t/2} \sum_{k=0}^{[y]} \big(F_s(y-k,x) - F_s(y-x-k-2,x) \big)\cdot  \sup_{u\geq0} \frac{\mathbf P\big( X_{t-s}\in  u+\varDelta_1 \big)}{\overline{\nu}(\beta t)} \,ds } 
 \ar\ar\cr
 \ar\leq\ar \frac{1}{t}\sup_{r\in[\frac{t}{2},t]}\sup_{u\geq0} \frac{\mathbf P\big( X_r\in  u+\varDelta_1 \big)}{\overline{\nu}(\beta t)}\cdot  \int_0^{t/2} \sum_{k=0}^{[y]} \big(F_s(y-k,x) - F_s(y-x-k-2,x) \big)ds.   \qquad
 \eeqlb
 By using Lemma~\ref{Lemma.203} with $\lambda =\beta/2$, then  \eqref{eqn.30} and the monotonicity of $\overline{\nu}$, we obtain that uniformly for all sufficiently  large $t$,
 \beqnn
 \sup_{r\in[\frac{t}{2},t]}\sup_{u\geq0} \frac{\mathbf P\big( X_r\in  u+\varDelta_1 \big) }{r\cdot \nu\big(\beta r+u +\varDelta_1\big)} 
 \leq\sup_{r\in[\frac{t}{2},t]}\sup_{u\geq \beta t/2} \frac{\mathbf P\big( X_r+\beta r\in  u+\varDelta_1 \big) }{r\cdot \nu\big(u +\varDelta_1\big)} \leq C
 \eeqnn
 and 
 \beqnn
 \sup_{r\in[\frac{t}{2},t]}\sup_{u\geq0} \frac{r\cdot \nu\big(\beta r+u +\varDelta_1\big)}{\overline{\nu}(\beta t) } 
 \leq C\cdot    \frac{ \overline{\nu}(\beta t/2)}{  \overline{\nu}(\beta t) } \leq C ,
 \eeqnn
 which follows that 
 \beqlb\label{eqn.222}
  \sup_{r\in[\frac{t}{2},t]}\sup_{u\geq0} \frac{\mathbf P\big( X_r\in  u+\varDelta_1 \big) }{\overline{\nu}(\beta t) } \leq C.
 \eeqlb
 Moreover, applying Fubini's theorem to the last integral in \eqref{eqn.208} and then using \eqref{eqn.2031},  
 \beqnn
 \lefteqn{\int_0^{t/2} \sum_{k=0}^{[y]} \big(F_s(y-k,x) - F_s(y-x-k-2,x) \big)ds}\ar\ar\cr
 \ar\leq\ar \sum_{k=0}^{[y]}   \int_0^\infty F_s(y-k,x)\,ds - \sum_{k=0}^{[y]}  \int_0^\infty F_s(y-x-k-2,x)\,ds\cr
 \ar=\ar \sum_{k=0}^{[y]}    V(y-k) \cdot \widehat{V}(x) 
 -  \sum_{k=0}^{[y]} V(y-x-k-2)\cdot \widehat{V}(x)  . 
 \eeqnn 
 which, by  the increase of $V$ and the fact that $V(\infty)<\infty$, can be bounded by 
 \beqnn
 \lefteqn{\widehat{V}(x)\sum_{k=0}^{[y]}    V\big([y]+1-k\big)  - \widehat{V}(x)\sum_{k=0}^{[y]}   V\big([y]-[x]-k-3\big)  }\ar\ar\cr
 \ar=\ar \widehat{V}(x)\bigg(\sum_{k=1}^{[y]+1}    V(k)  -  \sum_{k=-[x]-3}^{[y]-[x]-3}  V(k) \bigg)
 \leq  \widehat{V}(x)\cdot \sum_{k=[y]-[x]-2}^{[y]+1} V(k)  \leq   \widehat{V}(x) \cdot (x+4) \cdot V(\infty) ,
 \eeqnn
 uniformly in $y\geq 0$.  
 Taking this and \eqref{eqn.222} back into \eqref{eqn.208},  there exist some constants $C,t_0>0$ such that for all $t\geq t_0$,
 \beqnn
 \sup_{y\geq 0}\frac{I_{1}(t,y)}{\overline{\nu}(\beta t)}  \leq \frac{C}{t}
 \quad \mbox{and then}\quad 
 \sup_{y\geq 0} I_{1}(t,y) \leq C\cdot \frac{\overline{\nu}(\beta t)}{t}. 
 \eeqnn 
 
 \textbf{Case $i=2$.} We now consider $I_2(t,y)$.  
 By the definition of  $F_s(u,v)$, we have 
 \beqnn
 \sum_{k=0}^{[y]} \big(F_s(y-k,x) - F_s(y-x-k-2,x) \big)
 \ar=\ar \sum_{k=0}^{[y]} \mathbf{P}\big( y-x-k-2<\overline{X}_s\leq y-k,\, \overline{X}_s-X_s\leq x \big) \cr
 \ar=\ar \mathbf{E}\bigg[ \mathbf{1}_{\{\overline{X}_s-X_s\leq x \}}\cdot \sum_{k=0}^{[y]}  \mathbf{1}_{\{ y-x-k-2<\overline{X}_s\leq y-k \}} \bigg], 
 \eeqnn
 which  can be bounded by $\big([x]+4\big) \cdot \mathbf{P} \big(  \overline{X}_s-X_s\leq x\big) \leq (x+4 ) \cdot \mathbf{P}_x \big(\tau_0^->s\big)$ for all $y\geq 0$, since 
 \beqnn
 \sum_{k=0}^{[y]}  \mathbf{1}_{\{ y-x-k-2<z\leq y-k \}} \leq x+4,\quad 
 \forall\, y,z\geq 0.
 \eeqnn 
 Taking this back into the right-hand side of the last inequality in \eqref{eqn.207} and then plugging \eqref{eqn.207} into $I_2(t,y)$, we get
 \beqnn
 \sup_{y\geq 0} I_2(t,y) 
 \ar\leq\ar \frac{x+4}{t}\int_{t/2}^t  \sup_{u\geq 0} 	\mathbf P\big( X_{t-s}\in u+\varDelta_1 \big) \cdot \mathbf{P}_x \big(\tau_0^->s\big)\, ds\cr
 \ar\leq\ar \frac{x+4}{t}\cdot  \mathbf{P}_x \big(\tau_0^->t/2\big) \cdot \int_0^\infty\sup_{u\geq 0} 	\mathbf P\big( X_{s}\in u+\varDelta_1 \big) ds .
 \eeqnn
 By \eqref{eqn.222}, the last integral is finite. 
 Moreover,  the second result of \eqref{Asytau} yields that $  \mathbf{P}_x \big(\tau_0^->t/2\big) \leq  C \cdot \overline{\nu}(\beta t)$ for all $t\geq 0$. 
 Consequently, the upper bound \eqref{eqn.2201} with $i=2$ holds for large $t$.
 
 \textbf{Case $i=3$.}  
 For $0\leq z\leq x$, by the definition of $F_s(u,v)$ one can easily identify that  
 \beqnn
 F_s(y,x-z)-F_s(y-x-1,x-z)  \leq F_s(y,x)-F_s(y-x-1,x),
 \eeqnn
 which follows that $I_3(t,y)\leq I_{31}(t,y) + I_{32}(t,y)$ with
 \beqnn
 I_{31}(t,y)\ar:=\ar\frac{1}{t}\int_0^{t/2} \big( F_s(y,x)-F_s(y-x-1,x) \big)\cdot \mathbf P\big( X_{t-s}\in[-x,0] \big)\, ds\cr
 I_{32}(t,y)\ar:=\ar \frac{1}{t}\int_{t/2}^t \big( F_s(y,x)-F_s(y-x-1,x) \big)\cdot \mathbf P\big( X_{t-s}\in[-x,0] \big)\, ds.
 \eeqnn
 For the first term, by \eqref{eqn.2031} we have 
 \beqnn
 \frac{I_{31}(t,y)}{\overline{\nu}(\beta t)} 
 \ar\leq\ar  \frac{1}{t}\sup_{r\in[t/2,t]}
 \frac{\mathbf P\big( X_r\in[-x,0] \big)}{\overline{\nu}(\beta t)}\cdot \int_0^{t/2}\big( F_s(y,x)-F_s(y-x-1,x) \big)\, ds\cr
 \ar\leq\ar  \frac{1}{t}\sup_{r\in[t/2,t]}
 \frac{\mathbf P\big( X_r\in[-x,0] \big)}{\overline{\nu}(\beta t)}\cdot \int_0^\infty\big( F_s(y,x)-F_s(y-x-1,x) \big)\, ds \cr
 \ar=\ar  \frac{1}{t}\sup_{r\in[t/2,t]}
 \frac{\mathbf P\big( X_r\in[-x,0] \big)}{\overline{\nu}(\beta t)}\cdot  \big( \widehat{V}(x)V(y)-\widehat{V}(x)V(y-x-1) \big) . 
 \eeqnn
 Similarly as in \eqref{eqn.222}, we also have uniformly in large $t$,
 \beqnn
  \sup_{r\in[t/2,t]}
 \frac{\mathbf P\big( X_r\in[-x,0] \big)}{\overline{\nu}(\beta t)} \leq C.
 \eeqnn
 This along with the fact that $V(\infty)<\infty$ induces that uniformly in large  $t$
 \beqlb\label{eqn,221}
 \sup_{y\geq 0}  \frac{I_{31}(t,y)}{\overline{\nu}(\beta t)}  \leq \frac{C}{t}  . 
 \eeqlb
 For $I_{32}(t,y)$,  we have first for all $t>0$,
 \beqnn
 \frac{I_{32}(t,y)}{\overline{\nu}(\beta t) } 
 %\ar\leq\ar  \frac{1}{t} \sup_{r\in[t/2,t]}\big( F_r(y,x)-F_r(y-x-1,x) \big)\cdot \int_{t/2}^t  \mathbf P\big( X_{t-s}\in[-x,0] \big)\, ds\cr
 \ar\leq\ar \frac{1}{t} \sup_{r\in[t/2,t]}\frac{ F_r(y,x)-F_r(y-x-1,x)}{\overline{\nu}(\beta t) }\cdot \int_0^\infty  \mathbf P\big( X_s\in[-x,0] \big)\, ds. 
 \eeqnn
  Proposition~\ref{Lemma.205} induces that the last integral is finite.  
 Moreover, the definition of $F_r(u,v)$ shows that 
 \beqnn
 F_r(y,x)-F_r(y-x-1,x) = \mathbf{P}\big(y-x-1<\overline{X}_r\leq y,\,
 \overline{X}_r-X_r\leq x \big) .
 \eeqnn
 Note that $\{  y-x-1<\overline{X}_r\leq y\}\cap \{ \overline{X}_r-X_r\leq x \} \subset\{y-2x-1< X_r\leq y\}$, we have
 \beqnn
 \sup_{r\in[t/2,t]} \frac{F_r(y,x)-F_r(y-x-1,x)}{\overline{\nu}(\beta t) }
 \leq \sup_{r\in[t/2,t]}  \frac{\mathbf{P}\big( X_r\in(y-2x-1,y] \big)}{\overline{\nu}(\beta t) }. 
 \eeqnn
 Similarly as in \eqref{eqn.222}, there exist constants $C,t_0>0$ such that for all $t\geq t_0$,
 \beqnn
  \sup_{y\geq 0} \sup_{r\in[t/2,t]}  \frac{\mathbf{P}\big( X_r\in(y-2x-1,y] \big)}{\overline{\nu}(\beta t) }
 \ar\leq\ar    C
 \quad\mbox{and then}\quad 
 \sup_{y\geq 0} I_{32}(t,y) \leq C \cdot \frac{\overline{\nu}(\beta t)}{t}.  
 \eeqnn 
 This along with \eqref{eqn,221} immediately yields the upper bound \eqref{eqn.2201} with $i=3$. 
 \qed

 \begin{lemma}\label{Lem5.4}
 For any $x\geq 0$ and $\delta>0$, we have  as $t\to \infty$,
 \beqnn
 \frac{\beta t}{\overline{\nu}(\beta t)}\cdot 	\overline{n}_t\big(x+\varDelta_\delta\big) \to  \alpha\int_x^{x+\delta} \widehat{V}(z)\, dz . 
 \eeqnn
 \end{lemma}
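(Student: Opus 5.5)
We prove Lemma~\ref{Lem5.4} through the relation between $\overline{n}_t$ and the law of $X$ killed on first passage above $0$. The dual of \eqref{eqn.2011}/\eqref{eqn.447} is the last-exit (Markov) decomposition at the time $\overline g_t$ of the last supremum before $t$. It states that, for $u>0$,
\begin{equation*}
\mathbf{P}\big(\overline{X}_t-X_t\in du\big)=\bar{\tt d}\cdot\mathbf{P}\big(\overline{X}_t-X_t\in du,\ \overline g_t=t\big)+\int_0^t \mathbf{P}\big(\overline g_t\in ds\big)\ \text{(excursion part)},
\end{equation*}
or equivalently, in resolvent form, $\int_0^\infty e^{-qt}\mathbf{P}(\overline{X}_t-X_t\in du)\,dt=\kappa(q,0)^{-1}\cdot\big(\bar{\tt d}\,\delta_0(du)+\int_0^\infty e^{-qt}\overline{n}_t(du)\,dt\big)$.

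Rather than use this directly, we use the cleaner Markov-property route. For $r\in(0,t)$, the Markov property of $\overline{n}$ (the dual of \eqref{eqn.n-Markov}) gives
\begin{equation*}
\overline{n}_t\big(x+\varDelta_\delta\big)=\int_0^\infty \overline{n}_r(dy)\,\mathbf{P}_{-y}\big(X_{t-r}\in -(x+\varDelta_\delta),\ \tau_0^+>t-r\big).
\end{equation*}
Here $\overline{n}$ excursions of $\overline X-X$ are viewed through the dual $-X$ started at $-y$ and killed above $0$. We fix $r$ (say $r=1$) and let $t\to\infty$. By spatial homogeneity, the integrand equals $\mathbf{P}_{-y}\big(X_{t-1}\in[-x-\delta,-x),\ \tau_0^+>t-1\big)$. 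This is a difference of two terms of the form in Proposition~\ref{Lemma.207} (intervals $[-x-\delta,0)$ minus $[-x,0)$, up to boundary conventions that are harmless since the limit is continuous in the endpoints). Proposition~\ref{Lemma.207} therefore gives, for each fixed $y$,
\begin{equation*}
\frac{\beta t}{\overline\nu(\beta t)}\,\mathbf{P}_{-y}\big(\cdots\big)\to \alpha\,V(y)\int_x^{x+\delta}\widehat V(z)\,dz,
\end{equation*}
using $\overline\nu(\beta(t-1))\sim\overline\nu(\beta t)$.

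To pass the limit under $\int \overline{n}_1(dy)$ we use dominated convergence. Covering $[-x-\delta,-x)\subset(-x-\delta,0]$, Proposition~\ref{Lemma.208} gives a bound $C\,\overline\nu(\beta t)/t$ on the rescaled integrand, uniform in $y\ge0$, for $t$ large. It remains to check that $\overline{n}_1$ is a finite measure, i.e.\ $\overline n(\zeta>1)<\infty$. This holds because $\overline n$ is the excursion measure of $\overline X-X$, and excursions of length exceeding $1$ have finite mass; the alternative, $\overline{\tt d}>0$ with everything degenerate, is excluded since the integral representation already handles it. Hence
\begin{equation*}
\frac{\beta t}{\overline\nu(\beta t)}\,\overline n_t\big(x+\varDelta_\delta\big)\to \alpha\int_x^{x+\delta}\widehat V(z)\,dz\cdot \int_0^\infty V(y)\,\overline n_1(dy).
\end{equation*}

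The main obstacle is identifying the constant $\int V(y)\,\overline n_1(dy)=1$. This should follow from the invariance (harmonicity) of $V$ for $-X$ killed on entering $(0,\infty)$, i.e.\ the dual of the fact that $\widehat V$ is harmonic for $X$ killed below zero. That invariance gives $\int \overline n_r(dy)V(y)$ independent of $r$, and the normalization $\mathbf E[\int_0^\infty e^{-t}dL_t]=1$ fixes the value at $1$ (cf.\ the analogue in Doney--Rivero, where $\underline n(V(\epsilon_r),\zeta>r)=1$ under the same local-time normalization). If this identification is delicate, we would instead let $r\downarrow0$: using \eqref{eqn.447} together with Lemma~\ref{Lem5.1} and the finiteness $V(\infty)<\infty$, we would check that $\int V\,d\overline n_r$ is constant in $r$, and then compute it as $r\downarrow0$ via the Laplace identity \eqref{eqn.551} and its dual, $\kappa(\lambda,0)\widehat\kappa(\lambda,0)=\lambda$.
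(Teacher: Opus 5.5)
Your proposal is correct and is essentially the paper's proof. You use the Markov property of $\overline n$ at time $1$, pointwise limits from Proposition~\ref{Lemma.207} (you difference two intervals to reach general $x$, where the paper treats $x=0$ and says the rest follows), domination by Proposition~\ref{Lemma.208} together with $\overline n(\zeta>1)<\infty$, and the identity $\int_0^\infty V(w)\,\overline n_1(dw)=1$, which the paper does not derive either but quotes from Theorem~3 of \cite{Chaumont1996} or (3.6) of \cite{CuiWuXu2025}.

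Two touch-ups:
\begin{itemize}
\item When invoking Proposition~\ref{Lemma.208}, cover $[-x-\delta,-x)$ by $(-x-\delta-1,0]$ rather than $(-x-\delta,0]$, which misses the left endpoint.
\item If you want to derive the constant yourself, skip the $r\downarrow0$ idea and integrate $V$ against the resolvent identity you wrote down and set aside. Invariance of $V$ for $X$ started at $-y$ and killed at $\tau_0^+$ holds because $X$ does not drift to $+\infty$, so $c:=\int V\,d\overline n_r$ is constant in $r$. This gives $q\int_0^\infty e^{-qt}\mathbf E[V(\overline X_t-X_t)]\,dt=\kappa(q,0)^{-1}\big(q\,\overline{\tt d}\,V(0)+c\big)$. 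As $q\downarrow0$ the left side tends to $V(\infty)=1/\kappa(0,0)$, since $\overline X_t-X_t\overset{\rm d}=-\underline X_t\to\infty$. Hence $c=1$, whatever the normalisation of the local time.
\end{itemize}
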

 \proof We just need to prove the case of $x=0$. Other cases follow directly.  
 For any $t>1$,  by the Markov  property we have 
 \beqnn
  \frac{\beta t}{\overline{\nu}(\beta t)}\cdot\overline{n}_t\big(\varDelta_\delta\big)
 = \int_0^\infty  \frac{\beta t}{\overline{\nu}(\beta t)}\cdot\mathbf{P}_{-w}\big(X_{t-1} \in [-\delta,0) , \, \tau_0^+>t-1\big)\,
 \overline{n}_1(dw).
 \eeqnn 
 Passing both sides to their limits as $t\to\infty$, then using the dominated convergence theorem with the help of  Proposition~\ref{Lemma.208} and finally Proposition~\ref{Lemma.207}, we obtain 
 \beqnn
  \lim_{t\to\infty} \frac{\beta t}{\overline{\nu}(\beta t)}\cdot\overline{n}_t\big(\varDelta_\delta\big)
  %\ar=\ar \int_0^\infty   \lim_{t\to\infty}  \frac{\beta t}{\overline{\nu}(\beta t)}\cdot\mathbf{P}_{-w}\big(X_{t-r} \in [-\delta,0) , \, \tau_0^+>t-r\big)\, \overline{n}_r(dw)\cr
  \ar=\ar \alpha \int_0^\delta \widehat{V}(z)\,dz \cdot \int_0^\infty  V(w) \,
  \overline{n}_1(dw) =\alpha \int_0^\delta \widehat{V}(z)\,dz  . 
 \eeqnn
 The last equality follows directly from Theorem 3 in  \cite{Chaumont1996}  or (3.6) in \cite{CuiWuXu2025}. 
 \qed

  \section{Proofs of Theorem~\ref{MainThm00} and Corollary~\ref{Main.Corollary01}}
 \label{Sec.Proof01}
 \setcounter{equation}{0} 
 
 This section is devoted to the proofs of Theorem~\ref{MainThm00} and Corollary~\ref{Main.Corollary01}, which rest on the one-big-jump principle. Roughly speaking, this principle asserts that the dominant mechanism by which an excursion reaches a high level is the occurrence of a single large positive jump.
 To formulate this mechanism precisely, for $x\geq0$ and $\epsilon\in \mathcal{E}$,  define 
 \beqlb\label{eqn.70}
 \mathcal{B}^x(\epsilon):= \big\{ 0<s\leq \zeta: \Delta \epsilon_s>x \big\} 
 \quad \mbox{and}\quad 
 \mathcal{J}^x(\epsilon) := \inf \mathcal{B}^x(\epsilon) ,
 \eeqlb 
 with the convention that $\inf\emptyset=\infty$.
 The set $\mathcal{B}^x(\epsilon)$ consists of all jumps of $\epsilon$ whose size exceeds $x$, while $\mathcal{J}^x(\epsilon)$ denotes the arrival time of the first such jump. 
 When there is no risk of confusion, we suppress the dependence on $\omega$, and simply write $\mathcal{B}^x$ and $\mathcal{J}^x$. 
 Splitting according to whether $\mathcal{J}^{\lambda x}$ occurs by time $t$, we obtain
  \beqlb\label{eqn.5011}
   \underline{n}\big( \epsilon_t\in x+\varDelta_\delta,\,  \zeta>t \big) = \underline{n}\big( \epsilon_t\in x+\varDelta_\delta,\, \mathcal{J}^{\lambda x}> t,\,\zeta>t \big)  + \underline{n}\big( \epsilon_t\in x+\varDelta_\delta,\, \mathcal{J}^{\lambda x}\leq t,\,\zeta>t \big) 
  \eeqlb
 The next two propositions establish the asymptotic behavior of the two terms on the right-hand side respectively.
  
   \begin{proposition}\label{Prop.501}
  	For any $t,\delta>0$ and $\lambda\in (0,1)$, we have as $x\to\infty$,
  	\beqnn
  	 \frac{\underline{n}\big( \epsilon_t\in x+\varDelta_\delta,\, \mathcal{J}^{\lambda x}> t,\,\zeta>t \big) }{ \nu \big(  x+\varDelta_\delta \big)} \to 0.
  	\eeqnn 
  \end{proposition}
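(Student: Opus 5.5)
On the event $\{\mathcal{J}^{\lambda x}>t\}$ all jumps of the excursion on $(0,t]$ are at most $\lambda x$. Reaching level $x$ at time $t$ then needs either at least $x/(\lambda x)$ jumps of intermediate size, or a large deviation of the small-jump part. Both are much rarer than $\nu(x+\varDelta_\delta)\asymp \overline\nu(x)/x$. The plan is to reduce to the free L\'evy process and use a truncation estimate there.

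\textbf{Step 1: reduction to the free process.} By Remark~\ref{Remark01}, $\nu(x+\varDelta_\delta)\sim \alpha\delta\,\overline\nu(x)/x$, so it suffices to show that the numerator is $o(\overline\nu(x)/x)$. We use the Markov property \eqref{eqn.n-Markov} at a small time $r\in(0,t)$. Conditioning on $\epsilon_r=z$, the remaining contribution is at most
\[
\mathbf P_z\big(X_{t-r}>x,\ \text{no jump of }X\text{ in }(0,t-r]\text{ exceeds }\lambda x\big).
\]
Since $\underline n_r(dz)$ is only a $\sigma$-finite measure of total mass $\underline n(\zeta>r)<\infty$, this is fine after fixing $r$. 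We must also control the part of the excursion on $(0,r]$, where the position $\epsilon_r$ might be large. We split on $\{\epsilon_r\le x/2\}$ and $\{\epsilon_r>x/2\}$.

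\textbf{Step 1a: the event $\{\epsilon_r>x/2\}$.} Here the event $\mathcal{J}^{\lambda x}>r$ still holds, so this is a bound of the same type for $\underline n_r$. A cleaner route is to bound via $\underline n(\epsilon_r>x/2,\text{ jumps}\le\lambda x)$ and use the identity relating $\underline n$ to $\mathbf P$ through the excursion decomposition. Alternatively, apply the Markov property at a time $r$ and let $r\downarrow 0$ only at the end.

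\textbf{Step 2: truncated free process.} Let $X^{(\lambda x)}$ be $X$ with the jumps larger than $\lambda x$ removed. For $s\le t$, it suffices to show that
\[
\mathbf P\big(\sup_{s\le t}X^{(\lambda x)}_s>x/2\big)=o(\overline\nu(x)/x).
\]
Fix $K$ with $\lambda K<1/4$ if needed, or better, choose $\lambda'$ small. Decompose the truncated process into jumps in $(\eta x,\lambda x]$ and the rest. The probability of at least $m$ jumps exceeding $\eta x$ by time $t$ is at most $(t\overline\nu(\eta x))^m$. With $m>$ a fixed number, this is $O(x^{-m\alpha+o(1)})=o(x^{-\alpha-1})$ once $m\alpha>\alpha+1$. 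With fewer than $m$ such jumps, all of size $\le\lambda x$, reaching $x$ requires the process with jumps $\le\eta x$ to exceed $(1-m\lambda)x$, which is positive when $\lambda<1/m$.

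For $\lambda$ close to $1$ this counting breaks down. We therefore need a Fuk--Nagaev type bound for processes with jumps bounded by $\eta x$: $\mathbf P(\sup_{s\le t}X^{(\eta x)}_s>cx)\le (Ct\,\overline\nu(\eta x)\cdots)^{c/\eta}$, plus an exponentially small term. With $c/\eta$ large, this gives polynomial decay of any order.

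\textbf{Main obstacle: $\lambda$ near $1$.} The main obstacle is handling $\lambda$ arbitrary in $(0,1)$. Here one jump in $(\eta x,\lambda x]$ plus a rise of $(1-\lambda)x$ from the rest is the dominant scenario. Its probability is of order $\overline\nu(\eta x)\cdot\mathbf P(\text{rest}>(1-\lambda)x)\lesssim \overline\nu(\eta x)\,\overline\nu((1-\lambda)x/k)$, which is $O(x^{-2\alpha})=o(x^{-\alpha-1})$ since $\alpha>1$. Proposition~\ref{Prop.203} gives exactly this two-jump bound under $\underline n$, and we use it to bound pairs of jumps $>\eta x$ directly under the excursion measure. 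On the complementary event, at most one jump of the excursion exceeds $\eta x$ and it is $\le\lambda x$. The remaining rise $(1-\lambda)x$ must then come from jumps $\le\eta x$ and the diffusion. By the compensation formula \eqref{eqn.Compensation} and the Markov property, together with the Fuk--Nagaev bound for the free process, this contributes $o(\overline\nu(x)/x)$. The piece with no jump $>\eta x$ is handled by the same Fuk--Nagaev estimate, using $\underline n(\zeta)<\infty$.
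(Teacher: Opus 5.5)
There is a genuine gap, and it is in Step~1a, which you leave open: you never control the part of the path that lives under the excursion measure $\underline n$.

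\textbf{Step 1a and Step 2.} Applying the Markov property at a fixed time $r$ leaves the piece $\underline n(\epsilon_r>x/2,\ \mathcal J^{\lambda x}>r,\ \zeta>r)$. This is a problem of the same type, and it is not small enough to drop. For $\lambda>1/2$, a single jump in $(x/2,\lambda x]$ gives it mass of order $\overline\nu(x)\asymp x\,\nu(x+\varDelta_\delta)$, so you would have to keep the window $x+\varDelta_\delta$ at time $t$. Letting $r\downarrow0$ does not rescue this. What is missing is control of the initial segment uniformly in $x$, and the factor $\underline n(\zeta>r)$ grows as $r\downarrow0$; it tends to $+\infty$ when $0$ is regular for both half-lines. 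For the same reason, the claim of Step~2, $\mathbf P(\sup_{s\le t}X^{(\lambda x)}_s>x/2)=o(\overline\nu(x)/x)$, is false for every $\lambda>1/2$: one jump in $(x/2,\lambda x]$ already has probability of order $x^{-\alpha}$.

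\textbf{Your last paragraph.} The split by the number of jumps exceeding $\eta x$ is a workable skeleton. The case of two or more such jumps is fine via Proposition~\ref{Prop.203}, since $2\alpha>\alpha+1$. In the cases with at most one such jump, however, your Fuk--Nagaev bound is only available for the free process. The pre-jump segment produced by the compensation formula, i.e.\ the measure $\underline n(\epsilon_s\in dz,\ \text{no jump}>\eta x\text{ on }(0,s),\ \zeta>s)$, lives under $\underline n$, and so does the whole path in the zero-jump case. There you need an excursion-level bound such as $\underline n(\sup_{s\le t}\epsilon_s>cx,\ \text{no jump}>\eta x\text{ on }(0,t])=o(\overline\nu(x)/x)$. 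The finiteness $\underline n(\zeta)<\infty$ does not give it.

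\textbf{The missing idea.} Apply the strong Markov property at a first-passage time, which has finite $\underline n$-mass and whose landing point is controlled by the truncation. Stopping at $\tau^+_1$, where the position is at most $1+\eta x$ on the truncation event, and then applying Fuk--Nagaev to the free process would close your gap.

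\textbf{The paper's route.} The paper uses this device at level $\eta x$ instead, which removes the need for any truncated large-deviation estimate. Fix $\eta\in(0,1-\lambda)$. On the event in question, $\tau^+_{\eta x}\le t$ and $\eta x\le\epsilon_{\tau^+_{\eta x}}\le(\eta+\lambda)x$, because the overshoot jump is at most $\lambda x$. After $\tau^+_{\eta x}$, the free process must travel at least $(1-\eta-\lambda)x$ and land in a window of length $\delta$. By Lemma~\ref{Lemma.203}, this costs at most $C\,\nu(x+\varDelta_\delta)$ uniformly in the remaining time. The $o(1)$ factor comes from $\underline n(\tau^+_{\eta x}\le t)\le\underline n(\overline\epsilon>\eta x)\to0$. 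This handles all $\lambda\in(0,1)$ at once, with no jump counting.
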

  \proof For any fixed $\eta \in (0,1-\lambda)$, on the event
  $\{\epsilon_t\in x+\varDelta_\delta,\,\mathcal{J}^{\lambda x}> t,\,\zeta>t\}$,
  we observe that $\tau^+_{\eta x}\leq t$ and $ \eta  x \leq   \epsilon_{\tau^+_{\eta x}} \leq (\eta+\lambda) x$.  This follows that 
  \beqnn
   \underline{n}\big( \epsilon_t\in x+\varDelta_\delta,\, \mathcal{J}^{\lambda x}> t,\,\zeta>t \big)
  \ar=\ar \int_0^t  \int_{\eta x}^{ (\eta+\lambda) x} \underline{n}\big( \tau^+_{\eta x} \in ds,\, \epsilon_s \in dz,\, \epsilon_t\in x+\varDelta_\delta,\, \mathcal{J}^{\lambda x}> t,\,\zeta>t \big)\cr
  \ar\leq\ar  \int_0^t   \int_{\eta x}^{ (\eta+\lambda) x}  \underline{n}\big( \tau^+_{\eta x} \in ds,\, \epsilon_s \in dz,\, \epsilon_t\in x+\varDelta_\delta,\,\zeta>t \big). 
  \eeqnn
 By the  Markov property  and the spatial homogeneity of $X$, the last expression equals
  \beqlb\label{eqn.4021}
 \lefteqn{ \int_0^t  \int_{\eta x}^{ (\eta+\lambda) x}
  \mathbf{P}_z\big(
  X_{t-s}\in x+\varDelta_\delta,
  \tau_0^->t-s
  \big) \underline{n}\big(\tau^+_{\eta x}\in ds,\,\epsilon_s\in dz,\,\zeta>s \big)}\cr
  \ar=\ar  \int_0^t \int_{\eta x}^{ (\eta+\lambda) x}
  \mathbf{P}\big(
  X_{t-s}\in x-z+\varDelta_\delta,
  \tau_{-z}^->t-s \big) \underline{n}\big(\tau^+_{\eta x}\in ds,\,\epsilon_s\in dz,\,\zeta>s \big),
  \eeqlb
  which can be bounded by $\sup_{0<s\leq t} \sup_{y\geq(1-\eta-\lambda)x}  \mathbf{P}\big( X_s  \in   y+\varDelta_\delta  \big)\cdot \underline{n}\big(\tau^+_{\eta x}\leq t \big)$.
  Consequently,  
  \beqnn
   \frac{\underline{n}\big( \epsilon_t\in x+\varDelta_\delta,\, \mathcal{J}^{\lambda x}> t,\,\zeta>t \big) }{ \nu \big(  x+\varDelta_\delta \big)}
 % \ar\leq\ar
%  \int_0^t  \int_{(\eta-\lambda) x}^{\eta x} 
%  \mathbf{P}\big( X_{t-s}\in x-z+\varDelta_\delta \big)\,
%  \underline{n}\big( \tau^+_{\eta x}\in ds, \epsilon_s\in dz,\zeta>s
%  \big)
 % \cr\cr
  \ar\leq\ar
  \sup_{0<s\leq t} \sup_{y\geq(1-\eta-\lambda)x}  \frac{ \mathbf{P}\big( X_s\in y+\varDelta_\delta  \big) }{ \nu \big(  x+\varDelta_\delta \big)}  \cdot \underline{n}\big(\tau^+_{\eta x}\leq t \big).
  \eeqnn 
  Invoking Lemma~\ref{Lemma.203} with  $\vartheta_x=0$, and subsequently applying \eqref{eqn.30} along with the monotonicity of $\overline{\nu}$, we obtain a constant $C>0$ such that, uniformly for sufficiently large $x$,
  \beqnn
  \lefteqn{\sup_{0<s\leq t} \sup_{y\geq(1-\eta-\lambda)x}  \frac{ \mathbf{P}\big( X_s\in y+\varDelta_\delta  \big) }{ \nu \big(  x+\varDelta_\delta \big)}  }\cr
  \ar=\ar \sup_{0<s\leq t} \sup_{y\geq(1-\eta-\lambda)x}  \frac{\mathbf{P}\big( X_s + \beta s\in \beta s+ y+\varDelta_\delta  \big) }{\nu \big(  \beta s + y+\varDelta_\delta \big)} \cdot \frac{\nu \big(  \beta s + y+\varDelta_\delta \big)}{\nu \big(    x+\varDelta_\delta \big)}  
  \leq C . 
  \eeqnn 
 Since $ \underline{n}\big(\tau^+_{\eta x}\leq t \big)\leq \underline{n}\big(\overline\epsilon > \eta x \big) \to 0$ as $x\to\infty$,  the desired limit follows directly. 
  \qed

  \begin{proposition}\label{Prop.502}
 	For any $t,\delta>0$ and $\lambda\in (0,1)$, we have as $x\to\infty$,
 	\beqlb\label{eqn.301}
 	\bigg|\frac{\underline{n}\big( \epsilon_t\in x+\varDelta_\delta,\, \mathcal{J}^{\lambda x}\leq t,\,\zeta>t \big) }{ \nu \big(  x+\varDelta_\delta \big)}-\underline{n}(t\wedge\zeta) \bigg| \to 0. 
 	\eeqlb
  \end{proposition}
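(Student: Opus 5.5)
We decompose at the first big jump time $\mathcal{J}^{\lambda x}$ and combine the compensation formula \eqref{eqn.Compensation} with the Markov property \eqref{eqn.n-Markov}. Since $\mathcal{J}^{\lambda x}\le t<\zeta$ is the first time a jump exceeds $\lambda x$, we can write
\beqnn
\underline{n}\big( \epsilon_t\in x+\varDelta_\delta,\, \mathcal{J}^{\lambda x}\leq t,\,\zeta>t \big)
= \underline{n}\bigg(\sum_{0<s\le t\wedge\zeta} \mathbf 1_{\{\mathcal{J}^{\lambda x}\ge s,\,\Delta\epsilon_s>\lambda x\}}\, \mathbf P_{\epsilon_{s-}+\Delta\epsilon_s}\big(X_{t-s}\in x+\varDelta_\delta,\,\tau_0^->t-s\big)\bigg).
\eeqnn
The indicator $\{\mathcal{J}^{\lambda x}\ge s\}=\{\text{no jump }>\lambda x\text{ on }(0,s)\}$ is predictable. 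Applying \eqref{eqn.Compensation} then gives
\beqnn
\int_0^t ds\int_0^\infty \underline{n}\big(\epsilon_{s}\in dz,\,\mathcal{J}^{\lambda x}\ge s,\,\zeta>s\big)\int_{\lambda x}^\infty \mathbf P_{z+w}\big(X_{t-s}\in x+\varDelta_\delta,\,\tau_0^->t-s\big)\,\nu(dw).
\eeqnn
The indicator $\mathcal{J}^{\lambda x}\ge s$ costs at most $\underline n(\mathcal{J}^{\lambda x}\le t)\le t\,\underline n(\zeta)\overline\nu(\lambda x)$ mass, which is negligible; for the upper bound we may simply drop it. The target is therefore to show that the inner $w$-integral equals $(1+o(1))\,\nu(x+\varDelta_\delta)$, uniformly in $s\in(0,t]$ and in $z$ in a set carrying almost all of the mass of $\underline n_s$. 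Integrating then produces $\int_0^t \underline n(\zeta>s)\,ds=\underline n(t\wedge\zeta)$.

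For the inner integral, write $r=t-s\in[0,t)$. By spatial homogeneity,
\beqnn
\int_{\lambda x}^\infty \mathbf P\big(X_r\in x-z-w+\varDelta_\delta,\,\tau^-_{-z-w}>r\big)\,\nu(dw).
\eeqnn
For the upper bound we drop the killing and obtain $\mathbf P(X_r+w\in x-z+\varDelta_\delta)$ integrated against $\nu(dw)$ on $w>\lambda x$, which equals $\mathbf E\big[\nu\big((x-z-X_r+\varDelta_\delta)\cap(\lambda x,\infty)\big)\big]$. Split according to $|X_r|\le\vartheta_x$, with $\vartheta_x=o(x)$ and $\vartheta_x\to\infty$:
\begin{itemize}
\item On this event, together with $z\le \vartheta_x$, the local regular variation of Remark~\ref{Remark01} (uniform over shifts of size $o(x)$, and over $\delta\in[\delta_0,\infty]$ by monotonicity and the standard interval-splitting argument) gives $\nu(x-z-X_r+\varDelta_\delta)\sim\nu(x+\varDelta_\delta)$.
\item On $\{|X_r|>\vartheta_x\}$, we bound $\nu(\cdot\cap(\lambda x,\infty))$ by $\sup_{u>\lambda x}\nu(u+\varDelta_\delta)\le C\nu(x+\varDelta_\delta)$ and use $\sup_{r\le t}\mathbf P(|X_r|>\vartheta_x)\to0$, which follows from Proposition~\ref{Prop.206}.
\item The contribution of $z>\vartheta_x$ is controlled by $\int_0^t\underline n(\epsilon_s>\vartheta_x,\zeta>s)\,ds\to0$ times the same $C\nu(x+\varDelta_\delta)$, using $\underline n(\zeta)<\infty$ and dominated convergence.
\end{itemize}

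For the lower bound we must retain the killing. We bound $\mathbf P(\ldots,\tau^-_{-z-w}>r)$ from below by $\mathbf P(X_r+w\in x-z+\varDelta_\delta)-\mathbf P(\inf_{u\le r}X_u\le -z-w)$, and restrict to $|X_r|\le \vartheta_x$. The killing term is at most $\mathbf P(\sup_{u\le t}|X_u|\ge\lambda x)$, and we must make sure it is $o(1)$ in ratio. The cleaner route is to use the independence of the jump size $w$ from $\sup_{u\le r}|X_u|$: on $\{\sup_{u\le r}|X_u|\le\vartheta_x\}$, having $X_r+w+z\in x+\varDelta_\delta$ forces $w\ge x-2\vartheta_x>\vartheta_x$, so the path after the jump stays above $w-\vartheta_x>0$ and $\tau^->r$ holds automatically. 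This gives the lower bound
\beqnn
\mathbf E\Big[\nu\big(x-z-X_r+\varDelta_\delta\big);\ \sup_{u\le r}|X_u|\le\vartheta_x\Big]\ge(1-o(1))\,\nu(x+\varDelta_\delta)
\eeqnn
for $z\le\vartheta_x$. Integrating against $\underline n_s(dz)$ restricted to $z\le\vartheta_x$ and using $\underline n(\epsilon_s>\vartheta_x,\zeta>s)\to0$ completes the matching lower bound.

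The main obstacle is uniformity in $\delta\in[\delta_0,\infty]$ (needed later for Theorem~\ref{MainThm00}) of the ratio $\nu(x-y+\varDelta_\delta)/\nu(x+\varDelta_\delta)\to1$ for $|y|\le\vartheta_x$. We handle it by splitting into $\delta\le Kx$ and $\delta>Kx$. In the first regime, the comparison of $\nu\big((x-y,x]\big)$ against $\nu(x+\varDelta_\delta)\ge\nu(x+\varDelta_{\delta_0})$ works because $\nu((x-y,x])\lesssim(\vartheta_x/\delta_0)\,\nu(x+\varDelta_{\delta_0})$, and this is $o(\nu(x+\varDelta_{\delta_0}))$ once $\vartheta_x\to\infty$ slowly enough. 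The endpoint comparison at $x+\delta$ is handled the same way. In the second regime, tails dominate via $\overline\nu\in\mathrm{RV}_{-\alpha}$.
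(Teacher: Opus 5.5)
Your argument for fixed $\delta$ is correct and is essentially the paper's route: compensation formula plus the Markov property at the big jump, a negligible correction for a second big jump (you use the predictable indicator $\{\mathcal{J}^{\lambda x}\ge s\}$ where the paper uses the factorial-moment bound of Proposition~\ref{Prop.203}), and a dominated-convergence limit in $(s,z,X)$, which you make explicit through the truncation at $\vartheta_x$ instead of the paper's pointwise a.s.\ argument. Your final paragraph is not needed for this statement, since the paper obtains uniformity in $\delta$ afterwards by discretisation in the proof of Theorem~\ref{MainThm00}. Moreover, its reasoning is wrong as written: $(\vartheta_x/\delta_0)\,\nu(x+\varDelta_{\delta_0})$ is not $o(\nu(x+\varDelta_{\delta_0}))$ when $\vartheta_x\to\infty$, because only the difference of the two boundary pieces is small. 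The shift-invariance you actually use for fixed $\delta$ should instead be justified by $\nu(u+\varDelta_\delta)\sim\alpha\delta\,\overline{\nu}(u)/u$ together with the uniform convergence theorem for $\overline{\nu}(u)/u$.
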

 	\proof Let $ \mathcal{B}_t^{\lambda x}:= \mathcal{B}^{\lambda x}\cap [0,t]$  and $\# \mathcal{B}_t^{\lambda x}$ denote its cardinality.
 	When $\mathcal{J}^{\lambda x}\leq t$, we have $\mathcal{J}^{\lambda x} \in  \mathcal{B}_t^{\lambda x}$  and then 
  \beqnn
   0\leq  \underline{n}\big( \#\mathcal{B}_t^{\lambda x},\, \epsilon_t\in x+\varDelta_\delta,\, \zeta>t \big)  - \underline{n}\big( \epsilon_t\in x+\varDelta_\delta,\, \mathcal{J}^{\lambda x}\leq t,\,\zeta>t \big) 
  \leq  \underline{n}\Big( \#\mathcal{B}_t^{\lambda x} \big( \#\mathcal{B}_t^{\lambda x}-1 \big) \Big).
  \eeqnn
 Using the square of a sum formula and Proposition~\ref{Prop.203} shows that 
  \beqnn
   \underline{n}\Big( \#\mathcal{B}_t^{\lambda x} \big( \#\mathcal{B}_t^{\lambda x}-1 \big) \Big)
   \ar=\ar 2\cdot \underline{n}\bigg(  \sum_{0<s<r\leq t\wedge \zeta} \mathbf{1}_{\{\Delta \epsilon_s >\lambda x ,\, \Delta \epsilon_r > \lambda x  \}} \bigg) \leq 2\cdot t\cdot \underline{n}(\zeta ) \cdot \big| \overline{\nu}(\lambda x)  \big|^2 , 
  \eeqnn
   which is $o\big(  \nu(  x+\varDelta_\delta)\big)$ as $x\to\infty$. Consequently, the desired limit \eqref{eqn.301} follows immediately if 
 \beqlb\label{eqn.311}
  \limsup_{x\to\infty}\bigg|\frac{ \underline{n}\big( \#\mathcal{B}_t^{\lambda x},\,\epsilon_t\in x+\varDelta_\delta,\, \zeta>t \big) }{ \nu \big(  x+\varDelta_\delta \big)}-\underline{n}(t\wedge\zeta) \bigg| = 0. 
  \eeqlb
 Firstly, we have 
 \beqnn
 \underline{n}\Big( \# \mathcal{B}_t^{\lambda x},\, \epsilon_t\in x+\varDelta_\delta,\, \zeta>t \Big)
 \ar=\ar \underline{n}\bigg(  \sum_{0<s\leq t } \mathbf{1}_{\{ \Delta\epsilon_s> \lambda x,\, \epsilon_t\in x+\varDelta_\delta,\, \zeta>t \}} \bigg).
 \eeqnn
 Using the Markov property and the compensation formula \eqref{eqn.Compensation} to the right-hand side, we obtain
 \beqnn 
   \underline{n}\Big( \# \mathcal{B}_t^{\lambda x},\, \epsilon_t\in x+\varDelta_\delta,\, \zeta>t \Big) 
 	\ar=\ar 
 	\int_0^t ds \int_0^\infty \underline{n}_s(dz)
 	\int_{\lambda x}^\infty \mathbf{P}_{z+w}\big( X_{t-s}\in x+\varDelta_\delta,\,
 	\tau_0^->t-s \big)\nu(dw) .
 \eeqnn
 	By the spatial-homogeneity of $X$, the definition of $\tau_{-z-w}^-$ and Fubini's theorem,
 	\beqnn
 	\lefteqn{\int_{\lambda x}^\infty \mathbf{P}_{z+w}\big( X_{t-s}\in x+\varDelta_\delta,\, \tau_0^->t-s \big) \nu(dw) }\cr
 	\ar=\ar \int_{\lambda x}^\infty \mathbf{P}\big( X_{t-s}\in x-z-w+\varDelta_\delta,\,
 	\tau_{-z-w}^->t-s \big)
 	\nu(dw)\cr
 	\ar=\ar \int_{\lambda x}^\infty \mathbf{P}\big( X_{t-s}\in x-z-w+\varDelta_\delta,\, \underline{X}_{t-s}\geq -z-w  \big)
 	\nu(dw) = \mathbf{E}\Big[ \nu\big( D_\delta(t-s,x,z) \big) \Big],
 	\eeqnn
  where $D_\delta(t-s,x,z):= (\lambda x,\infty)\cap [-z-\underline{X}_{t-s},\infty ) \cap \big( x-z-X_{t-s} +\varDelta_\delta \big)$. 
 Consequently, 
  \beqlb\label{eqn.312}
  \bigg|\frac{ \underline{n}\big( \# \mathcal{B}_t^{\lambda x},\,\epsilon_t\in x+\varDelta_\delta,\, \zeta>t \big) }{ \nu \big(  x+\varDelta_\delta \big)}-\underline{n}(t\wedge\zeta) \bigg| 
  \ar=\ar \bigg|  \int_0^t ds \int_0^\infty  \frac{\mathbf{E}\big[ \nu\big( D_\delta(t-s,x,z) \big) \big]}{\nu \big(  x+\varDelta_\delta \big) } \,\underline{n}_s(dz) -\underline{n}(t\wedge\zeta) \bigg|\cr
  \ar=\ar \bigg|  \int_0^t ds \int_0^\infty \Big(  \frac{\mathbf{E}\big[ \nu\big( D_\delta(t-s,x,z) \big) \big]}{\nu \big(  x+\varDelta_\delta \big) } -1\Big) \,\underline{n}_s(dz)   \bigg|\cr
  \ar\leq\ar \int_0^t ds \int_0^\infty \bigg|  \mathbf{E}\Big[ \frac{ \nu\big( D_\delta(t-s,x,z) \big) }{\nu \big(  x+\varDelta_\delta \big) } \Big]-1\bigg| \,\underline{n}_s(dz)  . 
  \eeqlb 
  Observe that the interval $D_\delta(t-s,x,z)$ has left endpoint  larger than $\lambda x$ and   length  smaller than $\delta$. 
  By \eqref{eqn.30} and the monotonicity of $\overline{\nu}$, there exist some constants $C,x_0>0$ such that
  \beqlb\label{eqn.4020}
   \sup_{x\geq x_0}\sup_{s\in[0,t]}\sup_{z\geq 0} \frac{ \nu\big( D_\delta(t-s,x,z) \big) }{\nu \big(  x+\varDelta_\delta \big) } \leq C \quad a.s.
   \quad \mbox{and}\quad 
   \sup_{x\geq x_0}\sup_{s\in[0,t]}\sup_{z\geq 0}\mathbf{E}\bigg[ \frac{ \nu\big( D_\delta(t-s,x,z) \big) }{\nu \big(  x+\varDelta_\delta \big) } \bigg] \leq C. 
  \eeqlb 
  This permits us to take the limits of both sides of \eqref{eqn.312} as $x\to\infty$ and then invoke the dominated convergence theorem to deduce \eqref{eqn.311}, once the following holds for any fixed $z\ge0$ and $s\in[0,t]$:
  \beqnn
    \limsup_{x\to\infty} \Big|  \frac{  \nu\big( D_\delta(t-s,x,z) \big) }{\nu \big(  x+\varDelta_\delta \big) } -1\Big|  \overset{\rm a.s.}=0. 
  \eeqnn
  Actually, note that $x-z-X_{t-s}\geq \lambda x$ and $ x-z-X_{t-s}\geq-z-\underline X_{t-s}$ for sufficiently large $x$, we have  $D_\delta(t-s,x,z) = x-z-X_{t-s}+ \varDelta_\delta$ and then the preceding desired limit follows directly from  Condition~\ref{Main.Condition}. 
 This proves \eqref{eqn.311} and therefore \eqref{eqn.301}. 
 \qed 
 
 \medskip
 
 \textbf{\textit{Proof of Theorem~\ref{MainThm00}.}}   
 For any fixed $\delta>0$, the asymptotic equivalence \eqref{eqn.MainThm00} is obtained directly by substituting the two limits established in Propositions~\ref{Prop.501} and~\ref{Prop.502} into the right-hand side of \eqref{eqn.5011}. We now extend this to hold uniformly for $\delta\in[\delta_0,\infty]$.
 For any large integer $n\geq 1/\delta_0$, we have 
 \beqnn
   \frac{1}{ \underline{n}(\zeta \wedge t)}\sum_{k=0}^{[n\delta]-1}\underline{n}_t\Big(x+\frac{k}{n}+ \varDelta_{1/n} \Big) 
   \leq \frac{\underline{n}_t\big(x+\varDelta_\delta \big) }{ \underline{n}(\zeta \wedge t)} \leq  \frac{1}{ \underline{n}(\zeta \wedge t)}\sum_{k=0}^{[n\delta]}\underline{n}_t\Big(x+\frac{k}{n}+ \varDelta_{1/n} \Big). 
 \eeqnn 
 Applying the asymptotic equivalence \eqref{eqn.MainThm00} with $\delta=1/n$, we obtain a positive function $\rho_n$ on $\mathbb{R}_+$ that decreases to $0$ at infinity and satisfies, for all $x>0$ and $k\ge0$,
 \beqnn
  \big(1-\rho_n(x) \big) \cdot   \nu\Big( x+\frac{k}{n}+ \varDelta_{1/n} \Big) 
  \leq \frac{\underline{n}_t\big(x+k/n+ \varDelta_{1/n} \big) }{ \underline{n}(\zeta \wedge t)}
  \leq \big(1+\rho_n(x) \big) \cdot   \nu\Big( x+\frac{k}{n}+ \varDelta_{1/n} \Big).
 \eeqnn
 Combining these two estimates together yields
 \beqnn
 \frac{\underline{n}_t\big(x+\varDelta_\delta \big) }{ \underline{n}(\zeta \wedge t)   \nu\big( x + \varDelta_{\delta} \big)} \leq  \frac{\big(1+\rho_n(x) \big)}{  \nu\big( x + \varDelta_{\delta} \big)}  \sum_{k=0}^{[n\delta]}  \nu\Big( x+\frac{k}{n}+ \varDelta_{1/n} \Big)
 \leq \big(1+\rho_n(x) \big) \cdot   \frac{\nu\big( x + \varDelta_{\delta+1/n} \big)}{\nu\big( x + \varDelta_{\delta} \big)}.
 \eeqnn
 In view of the monotonicity of $\overline{\nu}(x+\varDelta_\delta)$ in $\delta$, together with \eqref{eqn.30}, this yields 
 \beqnn
  \limsup_{x\to\infty} \sup_{\delta\in [\delta_0,\infty]}\frac{\underline{n}_t\big(x+\varDelta_\delta \big) }{ \underline{n}(\zeta \wedge t)   \nu\big( x + \varDelta_{\delta} \big)} 
  \ar\leq \ar   \limsup_{x\to\infty} \sup_{\delta\in [\delta_0,\infty]}\frac{\nu\big( x + \varDelta_{\delta+1/n} \big)}{\nu\big( x + \varDelta_{\delta} \big)}\cr
  \ar\leq\ar 1+\limsup_{x\to\infty} \sup_{\delta\in [\delta_0,\infty]}\frac{\nu\big( x +\delta+ \varDelta_{1/n} \big)}{\nu\big( x + \varDelta_{\delta_0} \big)}  \leq  1+\frac{1/n}{\delta_0} .
 \eeqnn
 On the other hand, an analogous argument gives 
 \beqnn
 \liminf_{x\to\infty} \inf_{\delta\in [\delta_0,\infty]}\frac{\underline{n}_t\big(x+\varDelta_\delta \big) }{ \underline{n}(\zeta \wedge t)   \nu\big( x + \varDelta_{\delta} \big)} \geq    \liminf_{x\to\infty}  \inf_{\delta\in [\delta_0,\infty]} \frac{\nu\big( x + \varDelta_{\delta-1/n} \big)}{\nu\big( x + \varDelta_{\delta} \big)} \geq  1-\frac{1/n}{\delta_0} .
 \eeqnn
 Combining these two bounds together and then letting $n\to\infty$ yields the desired uniform asymptotic equivalence \eqref{eqn.MainThm00}.
 \qed

 \medskip

 \textbf{\textit{Proof of Corollary~\ref{Main.Corollary01}.}} 
 	In view of \eqref{eqn.30111}, we first have for $x>y$,
 	\beqnn
 	\mathbf{P}_y\big( X_t\in x+\varDelta_\delta,\tau_0^->t \big) 
 	\ar=\ar \overline{\tt d}\cdot \underline{n}_t\big(x-y+\varDelta_\delta \big) 
 	+ \int_0^t ds\int_0^y \underline{n}_{t-s}\big( x+z-y+\varDelta_\delta \big)\, \overline{n}_s(dz). 
 	\eeqnn
 	Integrating \eqref{eqn2.9} and then applying Fubini's theorem also gives
 	\beqnn
 	\mathbf{E}_y\big[t\wedge\tau_0^-\big]
 	= \int_0^t \mathbf{P}_y(\tau_0^->s)\, ds 
 	\ar=\ar
 	\overline{\tt d} \int_0^t  \underline n(\zeta>s)ds
 	+ \int_0^t ds\int_0^s dr\int_0^y  \underline n(\zeta>s-r)\, \overline{n}_r(dz)
 	\cr\cr
 	\ar=\ar
 	\overline{\tt d}\cdot 
 	\underline n(\zeta\wedge t)
 	+  \int_0^t dr\int_0^y  \overline{n}_r(dz) \int_r^t \underline{n}(\zeta>s-r)\,ds
 	\cr\cr
 	\ar=\ar \overline{\tt d}\cdot \underline{n}(\zeta\wedge t)
 	+  \int_0^t ds  \int_0^y \underline{n}\big( \zeta\wedge(t-s)\big) \, \overline{n}_s(dz).
 	\eeqnn
 	Combining these two equalities together and then using the triangle inequality yields 
 	\beqlb\label{eqn.3005}
 	\lefteqn{\bigg| \frac{\mathbf{P}_y\big( X_t\in x+\varDelta_\delta,\tau_0^->t \big)}{ \nu\big(x+\varDelta_\delta\big)}- \mathbf{E}_y\big[ t\wedge\tau_0^- \big]\bigg|}\cr
 	\ar\leq\ar  \overline{\tt d}\cdot \bigg| \frac{\underline{n}_t\big(x-y+\varDelta_\delta \big) }{\nu\big(x+\varDelta_\delta\big)} -\underline{n}(\zeta\wedge t)\bigg|
 	+ \int_0^t ds\int_0^y \bigg|\frac{\underline{n}_{t-s}\big( x+z-y+\varDelta_\delta \big)}{\nu\big(x+\varDelta_\delta\big)}- \underline{n}\big( \zeta\wedge(t-s)\big)\bigg|\, \overline{n}_s(dz). \qquad
 	\eeqlb
 	
 	We first prove the desired convergence for any fixed $\delta>0$. For the first term on the right-hand side, applying the triangle inequality gives
 	\beqnn
 	\bigg|
 	\frac{\underline{n}_t\big(x-y+\varDelta_\delta \big)}
 	{\nu\big(x+\varDelta_\delta\big)}
 	-\underline{n}(\zeta\wedge t)
 	\bigg|
 	\ar\leq\ar
 	\bigg|
 	\frac{\underline{n}_t\big(x-y+\varDelta_\delta \big)}
 	{\nu\big(x-y+\varDelta_\delta\big)}
 	-\underline{n}(\zeta\wedge t)
 	\bigg|
 	\cdot
 	\frac{\nu\big(x-y+\varDelta_\delta\big)}
 	{\nu\big(x+\varDelta_\delta\big)}
 	\cr\cr
 	\ar\ar
 	+
 	\bigg|
 	\frac{\nu\big(x-y+\varDelta_\delta\big)}
 	{\nu\big(x+\varDelta_\delta\big)}
 	-1
 	\bigg|
 	\cdot \underline{n}(\zeta\wedge t),
 	\eeqnn
 	which vanishes as $x\to\infty$ by Theorem~\ref{MainThm00} and Condition~\ref{Main.Condition}. 
 	Moreover, a similar argument gives that, for any $s\in[0,t]$ and $z\in[0,y]$,
 	\beqlb\label{eqn.3008}
 	\lim_{x\to\infty}
 	\bigg|
 	\frac{\underline{n}_{t-s}\big( x-y+z+\varDelta_\delta \big)}
 	{\nu\big(x+\varDelta_\delta\big)}
 	-\underline{n}\big( \zeta\wedge(t-s)\big)
 	\bigg|
 	=0 .
 	\eeqlb
 	In view of \eqref{eqn.312}-\eqref{eqn.4020} and then using Condition~\ref{Main.Condition}, we have that for all sufficiently large $x$,
 	\beqnn
 	\lefteqn{
 		\sup_{s\in[0,t]} \sup_{z\in [0,y]}
 		\underline{n}\big(
 		\epsilon_{t-s}\in x-y+z+\varDelta_{\delta},\,
 		\mathcal{J}^{\lambda (x-y+z)}\leq t-s,\,
 		\zeta> t-s
 		\big)
 	}\cr\cr
 	\ar=\ar
 	\sup_{s\in[0,t]} \sup_{z\in [0,y]}
 	\underline{n}\big(
 	\epsilon_s\in x-z+\varDelta_{\delta},\,
 	\mathcal{J}^{\lambda (x-z)}\leq s,\,
 	\zeta> s
 	\big)
 	\cr
 	\ar\leq\ar
 	\sup_{s\in[0,t]} \sup_{z\in [0,y]}
 	\underline{n}\big(
 	\#\mathcal{B}_t^{\lambda (x-z)},\,
 	\epsilon_s\in x-z+\varDelta_{\delta},\,
 	\zeta>s
 	\big)
 	\leq C\cdot \nu\big(x+\varDelta_{\delta}\big).
 	\eeqnn
 	On the other hand, similarly as in the proof of Proposition~\ref{Prop.501}, we also can prove that for all sufficiently large $x$,
 	\beqnn
 	\sup_{s\in[0,t]} \sup_{z\in [0,y]}
 	\underline{n}\big(
 	\epsilon_{t-s}\in x-y+z+\varDelta_{\delta},\,
 	\mathcal{J}^{\lambda (x-y+z)}> t-s,\,
 	\zeta> t-s
 	\big)
 	\leq
 	C\cdot \nu\big(x+\varDelta_{\delta}\big)
 	\cdot \underline{n}(\overline\epsilon \geq \eta x/2).
 	\eeqnn
 	Note $\underline{n}(\overline\epsilon \geq \eta x/2)\to0$ as $x\to\infty$. Combining these two upper bounds gives that for all sufficiently large $x$,
 	\beqnn
 	\sup_{s\in[0,t]} \sup_{z\in [0,y]}
 	\underline{n}\big(
 	\epsilon_{t-s}\in x-y+z+\varDelta_{\delta},\,
 	\zeta> t-s
 	\big)
 	\leq C\cdot \nu\big(x+\varDelta_{\delta}\big).
 	\eeqnn
 	Armed with this and \eqref{eqn.3008}, we use the dominated convergence theorem to obtain
 	\beqnn
 	\lefteqn{\lim_{x\to\infty}\int_0^t ds\int_0^y \bigg|\frac{\underline{n}_{t-s}\big( x+z-y+\varDelta_\delta \big)}{\nu\big(x+\varDelta_\delta\big)}- \underline{n}\big( \zeta\wedge(t-s)\big)\bigg|\, \overline{n}_s(dz) }\cr 
 	\ar\leq\ar  \int_0^t ds\int_0^y \lim_{x\to\infty}\bigg|\frac{\underline{n}_{t-s}\big( x+z-y+\varDelta_\delta \big)}{\nu\big(x+\varDelta_\delta\big)}- \underline{n}\big( \zeta\wedge(t-s)\big)\bigg|\, \overline{n}_s(dz) =0.
 	\eeqnn
    Substituting this and \eqref{eqn.3008} into the right-hand side of \eqref{eqn.3005} proves the desired convergence for any fixed $\delta>0$. 
    We now extend this to hold uniformly for $\delta\in[\delta_0,\infty]$. 
    For any large integer $n\geq1/\delta_0$,  
    \beqnn
    %\sum_{k=0}^{[n\delta]-1} \mathbf{P}_y\big( X_t\in x+k/n+\varDelta_{1/n},\tau_0^->t \big)  \ar\leq\ar 
    \mathbf{P}_y\big(
    X_t\in x+\varDelta_\delta,\tau_0^->t
    \big) 
    \ar\leq\ar
    \sum_{k=0}^{[n\delta]}
    \mathbf{P}_y\big(
    X_t\in x+k/n+\varDelta_{1/n},\tau_0^->t
    \big).
    \eeqnn
    By the preceding convergence with $\delta=1/n$, there exists  a positive function $\rho_n$ on $\mathbb{R}_+$ decreasing to $0$ at infinity such that, for all $x>y$ and $k\geq0$,
    \beqnn
%    \big(1-\rho_n(x)\big)\,
%    \mathbf{E}_y\big[t\wedge\tau_0^-\big]\,
%    \nu\big(x+k/n+\varDelta_{1/n}\big)
%    \ar\leq\ar
    \mathbf{P}_y\big(
    X_t\in x+k/n+\varDelta_{1/n},\tau_0^->t
    \big) 
    \ar\leq\ar
    \big(1+\rho_n(x)\big)\,
    \mathbf{E}_y\big[t\wedge\tau_0^-\big]\,
    \nu\big(x+k/n+\varDelta_{1/n}\big).
    \eeqnn
    Putting these two estimates together yields
    \beqlb\label{eqn.3022}
    \frac{
    	\mathbf{P}_y\big(
    	X_t\in x+\varDelta_\delta,\tau_0^->t
    	\big)}
    {\mathbf{E}_y\big[t\wedge\tau_0^-\big]\,
    	\nu\big(x+\varDelta_\delta\big)}
    \ar\leq\ar
    \big(1+\rho_n(x)\big)
    \frac{
    	\nu\big(x+\varDelta_{\delta+1/n}\big)}
    {\nu\big(x+\varDelta_\delta\big)}. 
    \eeqlb
  Since $\delta\to \nu\big(x+\varDelta_\delta\big)$ is non-decreasing, by \eqref{eqn.30} we have 
    \beqnn
    	\limsup_{x\to\infty}
    	\sup_{\delta\in[\delta_0,\infty)}
    	\frac{
    		\mathbf{P}_y\big(
    		X_t\in x+\varDelta_\delta,\tau_0^->t
    		\big)}
    	{\mathbf{E}_y\big[t\wedge\tau_0^-\big]\,
    		\nu\big(x+\varDelta_\delta\big)} 
    \ar\leq\ar
    1+
    \limsup_{x\to\infty}
    \sup_{\delta\in[\delta_0,\infty)}
    \frac{
    	\nu\big(x+\delta+\varDelta_{1/n}\big)}
    {\nu\big(x+\varDelta_{\delta_0}\big)}
    \cr\cr
    \ar\leq\ar
    1+
    \limsup_{x\to\infty}
    \sup_{u\geq x}
    \frac{\nu\big(u+\varDelta_{1/n}\big)}
    {\nu\big(x+\varDelta_{1/n}\big)}
    \cdot
    \frac{\nu\big(x+\varDelta_{1/n}\big)}
    {\nu\big(x+\varDelta_{\delta_0}\big)}
    \leq 
    1+\frac{1}{n\delta_0}.
    \eeqnn
    On the other hand, an analogous argument gives
    \beqnn
    	\liminf_{x\to\infty}
    	\inf_{\delta\in[\delta_0,\infty)}
    	\frac{
    		\mathbf{P}_y\big(
    		X_t\in x+\varDelta_\delta,\tau_0^->t
    		\big)}
    	{\mathbf{E}_y\big[t\wedge\tau_0^-\big]\,
    		\nu\big(x+\varDelta_\delta\big)}
    \geq
    \liminf_{x\to\infty}
    \inf_{\delta\in[\delta_0,\infty)}
    \frac{
    	\nu\big(x+\varDelta_{\delta-1/n}\big)}
    {\nu\big(x+\varDelta_\delta\big)}
    \geq
    1-\frac{1}{n\delta_0}.
    \eeqnn
    Combining these two estimates together and then letting $n\to\infty$ yields the desired uniform asymptotic equivalence for $\delta\in[\delta_0,\infty)$. 
    For $\delta=\infty$, the right-hand side of \eqref{eqn.3022} reduces to $1+\rho_n(x)$ which goes to $1$ as $x\to\infty$.  
    The corresponding lower bound follows by the same argument.
    This completes the whole proof.
 	\qed

  \section{Proofs of Theorem~\ref{MainThm01} and Corollary~\ref{Corollary.101}, \ref{Main.Corollary02}} 
\label{Sec.Proof02}
\setcounter{equation}{0}

 In this section, we first provide a detailed proof of Theorem~\ref{MainThm01}, which characterizes the precise local behavior of the entrance law of long-lived excursions. We then prove Corollaries~\ref{Corollary.101} and \ref{Main.Corollary02} by combining the path decomposition \eqref{eqn2.9} with Theorem~\ref{MainThm01}.
 Our proofs are strongly inspired by the argument used in the proof of Theorem~1.7 in \cite{CuiWuXu2025}, whose key observation is that, for an excursion to survive for a long time, a large jump must typically occur at an early stage of the excursion. This mechanism is formalized in the following two lemmas, whose proofs are deferred to Section~\ref{Sec.401}.

\begin{lemma}\label{Lemma.401}
	For any constants $\delta>0$ and $\eta \in \big(0,\frac{\alpha\wedge 2-1}{8(\alpha+1)}\big)$, we have  
	\beqnn 
	\lim_{T\to\infty} \limsup_{t\to\infty}\sup_{x\geq 0} \frac{\underline{n}\big(\epsilon_t\in x+\varDelta_\delta, \mathcal{J}^{\eta(\beta t+x)}>T,\zeta >t\big)}{\overline{\nu}(\beta t+x)/ (\beta t +x) } = 0. 
	\eeqnn
\end{lemma}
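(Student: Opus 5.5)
The idea is to apply the Markov property \eqref{eqn.n-Markov} at the fixed time $T$. Write $\theta:=\eta(\beta t+x)$ and $g_t(x):=\overline\nu(\beta t+x)/(\beta t+x)$. The event splits into two pieces: either the excursion survives to time $T$ with no jump exceeding $\theta$ on $[0,T]$, or the first big jump happens after $T$. By \eqref{eqn.n-Markov} we get
\beqnn
\underline n\big(\epsilon_t\in x+\varDelta_\delta,\,\mathcal J^{\theta}>T,\,\zeta>t\big)\leq \int_0^\infty \underline n_T(dz)\,\mathbf P_z\big(X_{t-T}\in x+\varDelta_\delta,\,\tau_0^->t-T\big).
\eeqnn
The event $\{\mathcal J^\theta>T\}$ on $[0,T]$ is simply dropped, and the later big jump is allowed. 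For the killed process started at $z$, the target is an estimate of the following form, uniform in $x\geq0$ and $t$ large:
\beqlb\label{eqn.plan1}
\mathbf P_z\big(X_{s}\in x+\varDelta_\delta,\,\tau_0^->s\big)\leq C\,(1+z)\,\frac{\overline\nu(\beta s+x)}{\beta s+x},\qquad s=t-T .
\eeqlb
With \eqref{eqn.plan1} in hand, the bound becomes $C\,g_t(x)\int(1+z)\,\underline n_T(dz)$. The first issue is that $\int(1+z)\,\underline n_T(dz)$ does not tend to $0$. So the decomposition has to be sharpened. The surviving part must carry the small factor $\underline n(\zeta>T)$, or, better, be the excursion restricted to having no jump larger than $\theta$ up to time $T$.

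I therefore use a finer split. On $\{\mathcal J^\theta>T,\zeta>t\}$, with $\epsilon_t>x$, there are two cases.

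\textbf{Case (a): no jump exceeds $\theta$ on all of $[0,t]$.} Then the excursion has stayed positive for a long time $t$ with only moderate jumps. I bound it by the truncated process $X^{(\theta)}$ (jumps $\le\theta$ removed). Its entry into $x+\varDelta_\delta$ while staying above $0$ has probability superpolynomially small in $\beta t+x$. This follows from a Fuk--Nagaev type bound: survival requires $X^{(\theta)}$ to deviate by order $\beta t+x$ from its mean, and with jumps truncated at $\eta(\beta t+x)$ this costs about $(t\,\overline\nu(\theta))^{1/\eta'}$. That is $o(g_t(x))$ once $\eta$ is small; this is where the restriction $\eta<\frac{\alpha\wedge2-1}{8(\alpha+1)}$ should enter. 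The $\underline n$-mass is handled by first conditioning at a fixed small time and using $\underline n(\zeta>1)<\infty$ together with \eqref{eqn.n-Markov}.

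\textbf{Case (b): the first jump exceeding $\theta$ happens at a time $s\in(T,t]$.} Apply the compensation formula \eqref{eqn.Compensation} at that jump. This gives
\beqnn
\int_T^t ds\int_0^\infty \underline n_s(dz)\int_\theta^\infty \mathbf P_{z+w}\big(X_{t-s}\in x+\varDelta_\delta,\,\tau_0^->t-s\big)\,\nu(dw).
\eeqnn
Spatial homogeneity together with Lemma~\ref{Lemma.203} (or simply boundedness of $\nu$ on windows, as in \eqref{eqn.4020}) shows that $\int_\theta^\infty\mathbf P_{z+w}(\cdots)\,\nu(dw)\le \mathbf E[\nu(\text{window of length }\delta\text{ beyond }\theta)]\le C\,\nu(\theta+\varDelta_\delta)$. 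For a single jump to land the path in $x+\varDelta_\delta$ after time $s$ while surviving, the jump size $w$ must be about $x+\beta(t-s)-z$. So the $\nu(dw)$ factor is of order $g$ evaluated at $(x+\beta(t-s))$, corrected by $z$. I then bound $\int_T^t ds\int \underline n_s(dz)(\cdots)$. For $s\le t/2$ the window is at level $\gtrsim \beta t/2+x$ and the weight is $\int_T^{t/2}\underline n(\zeta>s)\,ds\to0$ as $T\to\infty$, using \eqref{eqn.501}; this, times $C g_t(x)$, gives the desired term. For $s>t/2$ I use \eqref{Asytau} in the form $\underline n(\zeta>s)\lesssim\overline\nu(\beta s)$, obtained via $\underline n(\zeta>s)=\int\underline n_1(dz)\mathbf P_z(\tau_0^->s-1)$ and the first moment from Lemma~\ref{Lem5.1}. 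This yields $t\cdot\overline\nu(\beta t)\cdot\nu(\theta+\varDelta_\delta)\lesssim t\,\overline\nu(\beta t)^2/(\beta t)$, which is $o(g_t(x))$ uniformly in $x$ because $t\,\overline\nu(\beta t)\to0$ ($\alpha>1$). The large-$z$ part is handled with the moment $\underline n(\epsilon_s)$.

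\textbf{Main obstacle.} The hardest step is Case (a): obtaining a uniform-in-$x$ bound that is $o(g_t(x))$ for the event of long survival with all jumps below $\eta(\beta t+x)$. I would try first an exponential-martingale (Cram\'er-type) bound for the truncated process, applied over blocks of time. The survival-to-$t$ and reach-$x$ requirements force the truncated walk to beat its drift by $\beta t+x$, which yields a bound of order $\big(t\,\overline\nu(\eta(\beta t+x))\big)^{k}$ with $k\approx 1/(4\eta)$. The constraint on $\eta$ ensures $k$ is large enough to beat $g_t(x)$, including the $t$-factor coming from the $\underline n$ integration.
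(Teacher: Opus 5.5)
\textbf{Overall.} Your architecture is the paper's. You split $\{\mathcal J^{\theta}>T\}$ into $\{\mathcal J^\theta>t\}$ and $\{T<\mathcal J^\theta\le t\}$. The second piece is handled by the compensation formula plus a window bound. The first is handled by a truncated Fuk--Nagaev estimate, which is Proposition~\ref{Prop.401} in the paper.

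\textbf{Case (b).} This is essentially the paper's Part~II, but you make it harder than it is. The Fubini/window estimate
$\int_\theta^\infty \mathbf P_{z+w}(\cdots)\,\nu(dw)\le \sup_{y\ge\theta}\nu(y+\varDelta_\delta)\le C\,\overline\nu(\theta)/\theta\le C'\,g_t(x)$
is uniform in $s\in(T,t]$ and $z\ge 0$. So the whole term is at most $C'g_t(x)\int_T^\infty\underline n(\zeta>s)\,ds$; the paper packages this step as Proposition~\ref{Prop.403}. The split at $t/2$ and the large-$z$ part are therefore unnecessary. So is the bound $\underline n(\zeta>s)\lesssim\overline\nu(\beta s)$, which in any case does not follow from your sources: Lemma~\ref{Lem5.1} concerns $\overline n$, not $\underline n$, and \eqref{Asytau} is only an asymptotic for a fixed starting point.

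\textbf{The gap: Case (a).} The problem is the passage from the infinite measure $\underline n$ to the process. Conditioning at a fixed time $h$ leaves the starting point $z=\epsilon_h$ uncontrolled.
\begin{itemize}
\item The truncated large-deviation bound only helps for $z\le c(\beta t+x)$, with $c$ small enough that the effective truncation ratio $\eta/(1-c)$ stays in the admissible range.
\item For larger $z$ you need $\underline n(\epsilon_h>c(\beta t+x),\,\mathcal J^\theta>h,\,\zeta>h)=o(g_t(x))$ uniformly in $x$.
\item If you drop the no-big-jump constraint on $[0,h]$, Theorem~\ref{MainThm00} (with $\delta=\infty$) shows this mass is of order $\overline\nu(\beta t+x)$, which is far larger than $g_t(x)$.
\item If you keep the constraint, you need a second truncated estimate under $\underline n$ itself, and your sketch does not supply it.
\end{itemize}

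\textbf{The paper's device.} It conditions at $\tau_1^+$, the first passage of the excursion above $1$. On $\{\mathcal J^\theta>t\}$ this gives three things:
\begin{itemize}
\item the overshoot is controlled, $1\le\epsilon_{\tau_1^+}\le 1+\eta(\beta t+x)$;
\item the conditioning mass $\underline n(\overline\epsilon\ge1)$ is finite;
\item the shifted level satisfies $\beta(t-s)+x-z\ge(1-2\eta)(\beta t+x)$, so the truncation ratio becomes $\eta/(1-2\eta)$, still admissible in Proposition~\ref{Prop.401}.
\end{itemize}
This is why the lemma's range for $\eta$ is smaller than the proposition's. On $\{\tau_1^+>T\}$ the paper conditions at time $T$, where $\epsilon_T\le1$.

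\textbf{Minor point.} Fuk--Nagaev gives polynomial, not superpolynomial, decay, of order $(\beta t+x)^{-(\kappa-1)/(4\eta)}$ with $\kappa<\alpha\wedge 2$. It suffices only because this exponent exceeds $\alpha+1$, and that is exactly where the restriction on $\eta$ enters.
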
  

\begin{lemma}\label{Lemma.402}
	For any constants $\delta,T>0$ and $\eta\in(0,\frac{\alpha\wedge 2-1}{8(\alpha+1)})$, we have  
	\beqlb\label{eqn4.26} 
	\limsup_{t\to\infty}\sup_{x\geq0} \bigg| \frac{ \underline{n}\big( \epsilon_t\in x+\varDelta_\delta,\mathcal{J}^{\eta(\beta t+x)}\leq T,\zeta>t \big)}{\alpha\cdot \overline\nu(\beta t+x)/(\beta t+x)} -\frac{\underline n(\zeta\wedge T)}{ \underline n(\zeta)  } \int_x^{x+\delta}V(z)\, dz  \bigg| = 0. 
	\eeqlb 
\end{lemma}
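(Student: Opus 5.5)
The plan follows the pattern of Proposition~\ref{Prop.502}, now with moving windows and a survival constraint that no longer disappears in the limit. Throughout, write $a_t(x):=\beta t+x$ and $r:=t-s$.

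\textbf{Step 1: reduce the first big jump to a jump count.} Replace $\{\mathcal{J}^{\eta a_t(x)}\le T\}$ by the number $\#\mathcal{B}^{\eta a_t(x)}_T$ of jumps larger than $\eta a_t(x)$ up to time $T$. The error is bounded by $\underline n\big(\#\mathcal B_T(\#\mathcal B_T-1)\big)$. By Proposition~\ref{Prop.203} this is at most $2T\,\underline n(\zeta)\,\overline\nu(\eta a_t(x))^2$. Since $\alpha>1$, $y\,\overline\nu(y)\to0$, so this error is $o\big(\overline\nu(a_t(x))/a_t(x)\big)$ uniformly in $x\ge0$.

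For the jump count, use the Markov property \eqref{eqn.n-Markov} and the compensation formula \eqref{eqn.Compensation}, exactly as in Proposition~\ref{Prop.502}. This gives
\[
\underline n\big(\#\mathcal B^{\eta a_t(x)}_T,\ \epsilon_t\in x+\varDelta_\delta,\ \zeta>t\big)=\int_0^T ds\int_0^\infty \underline n_s(dz)\,\mathbf E\big[\nu\big(D_\delta(r,x,z)\big)\big],
\]
where
\[
D_\delta(r,x,z)=(\eta a_t(x),\infty)\cap[-z-\underline X_r,\infty)\cap(x-z-X_r+\varDelta_\delta).
\]
In this representation the final position is $v:=z+w+X_r\in x+\varDelta_\delta$, and survival means $v>X_r-\underline X_r$.

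\textbf{Step 2: identify the limit of the inner expectation.} By time reversal at time $r$, the pair $(X_r,\,X_r-\underline X_r)$ has the same law as $(X_r,\,\overline X_r)$. Hence
\[
\mathbf E\big[\nu(D_\delta)\big]=\mathbf E\Big[\int_x^{x+\delta}\mathbf 1_{\{\overline X_r<v,\ v-z-X_r>\eta a_t(x)\}}\,\nu(dv-z-X_r)\Big].
\]
As $r\ge t-T\to\infty$, three facts hold:
\begin{itemize}
\item $\overline X_r\uparrow\overline X_\infty$ almost surely, and $\mathbf P(\overline X_\infty\le v)=\kappa(0,0)V(v)=V(v)/\underline n(\zeta)$ by \eqref{eqn.501} and \eqref{eqn.2011};
\item $(X_r+\beta r)/t\to0$ in probability;
\item $\nu(u+\varDelta_h)\sim\alpha h\,\overline\nu(u)/u$ uniformly over $u\ge a_t(x)/2$, by \eqref{eqn.30}.
\end{itemize}
On the good event $\{|X_r+\beta r|\le\varepsilon t,\ z\le K\}$, the $\nu$-density near $v-z-X_r$ is therefore asymptotically $\alpha\,\overline\nu(a_t(x))/a_t(x)$, uniformly in $v\in(x,x+\delta]$ and $x\ge0$. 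Since $\beta r$ differs from $\beta t$ by at most $\beta T$, regular variation absorbs this shift. On the same event the constraint $w>\eta a_t(x)$ holds automatically because $\eta<1$.

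To make this rigorous with the indicator $\mathbf 1_{\{\overline X_r<v\}}$, discretize $(x,x+\delta]$ into cells of mesh $1/m$ and sandwich the indicator between its values at the cell endpoints. This gives
\[
\mathbf E\big[\nu(D_\delta)\big]\sim\alpha\,\frac{\overline\nu(a_t(x))}{a_t(x)}\int_x^{x+\delta}\frac{V(v)}{\underline n(\zeta)}\,dv.
\]
The convergence $\mathbf P(\overline X_r<v)\to V(v)/\underline n(\zeta)$ is uniform in $v$ because $V$ is bounded and monotone; a Dini/P\'olya-type argument applies (at least at continuity points, with $m\to\infty$ afterwards).

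\textbf{Step 3: dominate and integrate (the main obstacle).} The hard part is making Step~2 uniform in $x\ge0$ and integrable in $(s,z)$.

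First, for all $z\ge0$ and $s\le T$ the interval $D_\delta$ has length at most $\delta$ and left endpoint above $\eta a_t(x)$. So \eqref{eqn.30} gives the pathwise bound
\[
\nu(D_\delta)\le C\,\frac{\overline\nu(a_t(x))}{a_t(x)},
\]
just as in \eqref{eqn.4020}. This dominates everything. The bad events are then handled as follows:
\begin{itemize}
\item $\{|X_r+\beta r|>\varepsilon t\}$ has probability tending to $0$, uniformly in $x$, by the pathwise bound together with Proposition~\ref{Prop.206}.
\item $\{z>K\}$ contributes at most $C\int_0^T\underline n_s(z>K)\,ds$, which is small for large $K$ since $\int_0^T\underline n(\zeta>s)\,ds<\infty$.
\end{itemize}
Dominated convergence in $(s,z)$, with $\int_0^T ds\int\underline n_s(dz)=\underline n(\zeta\wedge T)$, then yields \eqref{eqn4.26}.

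I expect the delicate point to be the uniformity over all $x\ge0$, including $x\gg t$, when $\overline X_r$ is coupled with the local behaviour of $\nu$. I would first try to make every estimate depend on $x$ only through the ratio $\nu(u+\varDelta_h)\big/\big(h\,\overline\nu(a_t(x))/a_t(x)\big)$ for $u\in[a_t(x)(1-\varepsilon),a_t(x)(1+\varepsilon)]$. By Potter bounds this ratio tends to $1$ uniformly in $x$ as $t\to\infty$ and then $\varepsilon\to0$.
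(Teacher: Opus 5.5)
Your proof is correct and shares the paper's skeleton: the reduction to the jump count via Proposition~\ref{Prop.203}, the compensation-formula representation, the truncation $z\le K$, localization by the law of large numbers, and the limit $\mathbf P(\overline X_{t-s}\le v)\to V(v)/\underline n(\zeta)$ after duality. You handle the main term differently, however.

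The paper splits the jump range into $B(t,x,\kappa)$ and $\big((1-\kappa)(\beta t+x),(1+\kappa)(\beta t+x)\big]$ and bounds the far part with Proposition~\ref{Prop.403}. On the near part it cuts the jump size into cells of length $\kappa\delta$ and proves the lattice-counting inequalities \eqref{eqn.4012} and \eqref{eqn.4016}. These turn the sum over cells into $\frac{1}{\kappa\delta}\int_x^{x+\delta}\mathbf 1_{\{X_{t-s}-\underline X_{t-s}\le w\}}\,dw\pm 2$, so only the marginal identity $X_{t-s}-\underline X_{t-s}\overset{\rm d}=\overline X_{t-s}$ is needed.

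You instead keep $\nu$ inside the expectation as $\mathbf E[\nu(D_\delta)]$. Then every bad event costs at most the deterministic bound $C\,\overline\nu(\beta t+x)/(\beta t+x)$ times its probability, which replaces Proposition~\ref{Prop.403}. You localize on $\{|X_r+\beta r|\le\varepsilon t\}$ rather than on the jump size, and you discretize the terminal position $v$, where \eqref{eqn.30} applies cell by cell. This is shorter and avoids the counting inequalities. The price is the joint identity $(X_r,X_r-\underline X_r)\overset{\rm d}=(X_r,\overline X_r)$, which holds by time reversal. Alternatively, drop the good event at cost $\mathbf P(|X_r+\beta r|>\varepsilon t)$ before dualizing, as the paper does, and use only the marginal identity.

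A few justifications need repair:

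\textbf{The vanishing of the bad event.} Proposition~\ref{Prop.206} only gives $\mathbf E|X_r+\beta r|=O(t)$, so Markov's inequality makes $\mathbf P(|X_r+\beta r|>\varepsilon t)$ merely bounded. The vanishing comes from the weak law of large numbers, which you already quote in Step~2.

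\textbf{Uniformity in $v$.} Boundedness and monotonicity of $V$ do not by themselves give uniformity in $v$ of $\mathbf P(\overline X_r\le v)\to V(v)/\underline n(\zeta)$, since P\'olya's theorem needs a continuous limit. (Continuity of $V$ does hold here, but needs an argument.) Your hedge ``at continuity points'' would not give uniformity in $x$, because your cell endpoints $x+k/m$ move with $x$. The clean reason behind the uniform statement \eqref{eqn.4015} is that $X_t\to-\infty$ makes $\overline X$ eventually constant. Hence $\sup_v|\mathbf P(\overline X_r\le v)-\mathbf P(\overline X_\infty\le v)|\le\mathbf P(\overline X_r\ne\overline X_\infty)\to0$, uniformly in $r\ge t-T$.

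\textbf{What closes the argument.} It is not dominated convergence in $(s,z)$ but the uniformity of your Step~2 estimates in $x\ge0$, $s\in[0,T]$ and $z\in[0,K]$. These estimates involve $\nu$ only on cells inside $[(1-\varepsilon')(\beta t+x),(1+\varepsilon')(\beta t+x)]$ with $\beta t+x\ge\beta t$. This also disposes of your concern about $x\gg t$.

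Finally, survival is $v\ge X_r-\underline X_r$, not a strict inequality; this is immaterial.
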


\medskip

 \textbf{\textit{Proof of Theorem~\ref{MainThm01}.}}  
 We only provide the detailed proof of the first uniform asymptotic equivalence in \eqref{eqn.MainThm01}. The second follows, uniformly in $x\geq 0$ and $\delta\in[\delta_0,\infty]$, from the standard discretization argument, together with \eqref{eqn.30} and the monotonicity and boundedness of $V$. We omit the details.
 
 We first prove that the first asymptotic equivalence \eqref{eqn.MainThm01} holds uniformly in $x\geq0$ for each fixed $\delta>0$. 
 By \eqref{eqn.30}, we have as $t\to\infty$,
 \beqnn
  \sup_{x\geq 0}\sup_{z\in x+\varDelta_\delta}\bigg|  \frac{\overline{\nu}(\beta t +z)/(\beta t +z)}{\overline{\nu}(\beta t +x)/(\beta t +x)} -1\bigg| \to 0.
 \eeqnn
 Since $V(x)>0$ when $x>0$ and $V(\infty)<\infty$; see \eqref{eqn.2011},  then
 \beqlb\label{eqn.201}
 0<\inf_{x\geq 0} \int_x^{x+\delta} V(z)\,dz \leq \sup_{x\geq 0} \int_x^{x+\delta} V(z)\,dz\leq V(\infty)\cdot \delta<\infty. 
 \eeqlb
 These two results indicate that we just need to prove that as $t\to\infty$, 
\beqnn
\sup_{x\geq 0}\bigg| \frac{\underline{n}_t\big( x+\varDelta_\delta\big)}{\alpha\cdot  \overline{\nu}(\beta t +x)/(\beta t +x) }-   \int_x^{x+\delta} V(z)\,dz\bigg|\to 0. 
\eeqnn
For any  $T>0$ and $\eta \in \big(0, \frac{\alpha\wedge 2+1}{8(\alpha+1)} \big)$, by the triangle inequality this supremum can be bounded by 
\beqnn
\sup_{x\geq 0} \frac{ \underline{n}\big(\epsilon_t\in x+\varDelta_\delta,\,\mathcal{J}^{\eta(\beta t+x)}>T,\, \zeta >t\big)}{\alpha\cdot  \overline{\nu}(\beta t +x)/(\beta t +x) } 
+ \sup_{x\geq 0}\bigg| \frac{\underline{n}\big(\epsilon_t\in x+\varDelta_\delta,\,\mathcal{J}^{\eta(\beta t+x)}\leq T,\, \zeta >t\big)}{\alpha\cdot  \overline{\nu}(\beta t +x)/(\beta t +x) }-\int_x^{x+\delta} V(z)\,dz\bigg| ,
\eeqnn
which goes to $0$ as $t\to\infty$ and then $T\to \infty$ by Lemma~\ref{Lemma.401} and \ref{Lemma.402}. 
Hence \eqref{eqn.MainThm01} holds uniformly in $x\geq0$ for each fixed $\delta>0$. 
We now extend it to hold uniformly for  $\delta\in [\delta_0,\infty]$. 
For any large integer $n\geq 1/\delta_0$, we have 
\beqlb\label{eqn.4001}
\sum_{k=0}^{[n\delta]-1} \underline{n}_t\big( x+k/n+\varDelta_{1/n}\big)\leq \underline{n}_t\big( x+\varDelta_\delta\big) 
\ar\leq\ar \sum_{k=0}^{[n\delta]} \underline{n}_t\big( x+k/n+\varDelta_{1/n}\big).
\eeqlb
In view of our previous result, there exists a positive function $\rho_n$ on $\mathbb{R}_+$ that decreases to $0$ at infinity and satisfies,   for all $x\geq 0$ and $k\geq 0$, 
\beqnn
\underline{n}_t\big(x+k/n+\varDelta_{1/n}\big) 
\ar\geq\ar \big( 1-\rho_n(t) \big)  \cdot  \alpha\cdot\int_{x+k/n}^{x+k/n+1/n} \frac{\overline{\nu}(\beta t+z)}{\beta t+z} \cdot V(z)\, dz 
\eeqnn
and 
\beqnn
\underline{n}_t\big(x+k/n+\varDelta_{1/n}\big) 
\ar\leq\ar \big( 1+\rho_n(t) \big)  \cdot  \alpha\cdot\int_{x+k/n}^{x+k/n+1/n} \frac{\overline{\nu}(\beta t+z)}{\beta t+z} \cdot V(z)\, dz.
\eeqnn
Substituting these two estimates into the lower and upper bounds in \eqref{eqn.4001} respectively yields that 
\beqnn
\underline{n}_t\big( x+\varDelta_\delta\big) 
\ar\geq\ar  \alpha\cdot \big( 1-\rho_n(t) \big)  \cdot \sum_{k=0}^{[n\delta]-1} \int_{x+k/n}^{x+k/n+1/n} \frac{\overline{\nu}(\beta t+z)}{\beta t+z} \cdot V(z)\, dz\cr
\ar\geq\ar \alpha\cdot \big( 1-\rho_n(t) \big)  \cdot \bigg( \int_{x}^{x+\delta} \frac{\overline{\nu}(\beta t+z)}{\beta t+z} \cdot V(z)\, dz - \int_{x+\delta-1/n}^{x+\delta} \frac{\overline{\nu}(\beta t+z)}{\beta t+z} \cdot V(z)\, dz \bigg)
\eeqnn
and
\beqnn
\underline{n}_t\big( x+\varDelta_\delta\big) 
\ar\leq\ar  \alpha\cdot \big( 1+\rho_n(t) \big)  \cdot \sum_{k=0}^{[n\delta]} \int_{x+k/n}^{x+k/n+1/n} \frac{\overline{\nu}(\beta t+z)}{\beta t+z} \cdot V(z)\, dz\cr
\ar\leq\ar \alpha\cdot \big( 1+\rho_n(t) \big)  \cdot  \bigg(   \int_{x}^{x+\delta} \frac{\overline{\nu}(\beta t+z)}{\beta t+z} \cdot V(z)\, dz
+  \int_{x+\delta}^{x+\delta+1/n} \frac{\overline{\nu}(\beta t+z)}{\beta t+z} \cdot V(z)\, dz\big).
\eeqnn
Since $\rho_n(t)\to0$ as $t\to\infty$, these two estimates yield that the desired asymptotic equivalence \eqref{eqn.MainThm01} with $\delta=\infty$ holds uniformly for $x\geq0$. 
Moreover, since $\delta\mapsto  \underline{n}_t\big( x+\varDelta_\delta\big)$ is non-decreasing, our first result shows that for all sufficiently large $t$ and all $x\geq 0$,
\beqnn
\inf_{\delta\in [\delta_0,\infty)} \underline{n}_t\big( x+\varDelta_\delta\big)
 =  \underline{n}_t\big( x+\varDelta_{\delta_0}\big) \geq C\cdot  \frac{\overline{\nu}(\beta t+x)}{\beta t+x}. 
\eeqnn 
Hence the preceding lower and upper bounds also indicate that \eqref{eqn.MainThm01} holds uniformly in $x\geq0$ and  $\delta\in [\delta_0,\infty)$ if we can identify
\beqlb\label{eqn.4002}
\limsup_{n\to\infty} \limsup_{t\to\infty} \sup_{x\geq 0}\sup_{\delta\in [\delta_0,\infty)}
\frac{\beta t+x}{\overline{\nu}(\beta t+x)} \int_{x+\delta-1/n}^{x+\delta+1/n} \frac{\overline{\nu}(\beta t+z)}{\beta t+z} \cdot V(z)\, dz =0.
\eeqlb
Actually, since $V(z)\leq V(\infty)<\infty$ and $\overline\nu \in \mathrm{RV}^\infty_{-\alpha}$ is non-increasing, we have as $t\to\infty$, 
\beqnn
\sup_{x\geq 0}\sup_{\delta\in [\delta_0,\infty)}
\frac{\beta t+x}{\overline{\nu}(\beta t+x)} \int_{x+\delta-\frac{1}{n}}^{x+\delta+\frac{1}{n}} \frac{\overline{\nu}(\beta t+z)}{\beta t+z} \cdot V(z)\, dz  
\leq \frac{2V(\infty)}{n} \cdot \sup_{x\geq \beta t} \frac{x}{\overline{\nu}(x)}\frac{\overline{\nu}(x-1)}{x-1}
\to \frac{2V(\infty)}{n},
\eeqnn
which vanishes as $n\to\infty$  and then the limit \eqref{eqn.4002} holds. This completes the proof.
\qed

\textbf{\textit{Proof of Corollary~\ref{Corollary.101}.}}  
For each integer $n\geq1$, define the step-function approximation of $f$:
\beqnn
f_n(z):= \sum_{k=0}^\infty f(k/n) \cdot \mathbf{1}_{\{ z\in(\frac{k}{n},\frac{k+1}{n}] \}},\quad z\geq0.
\eeqnn
Since $f$ is continuous, it is uniformly continuous on every compact interval. Hence, as $n\to\infty$,
\beqlb\label{eqn.302}
\sup_{z\in(x,y]}|f(z)-f_n(z)| \to 0. 
\eeqlb
 By the triangle inequality,  
 \beqlb\label{eqn.4003}
	\lefteqn{\bigg| \frac{\beta t}{\overline{\nu}(\beta t)} \int_x^y f(z)\,\underline{n}_t(dz) - \alpha\int_x^y f(z)V(z)\,dz \bigg|}\cr
 \ar\leq\ar
	\frac{\beta t}{\overline{\nu}(\beta t)}
	\int_x^y |f(z)-f_n(z)|\,\underline{n}_t(dz) +
	\alpha\int_x^y |f(z)-f_n(z)|V(z)\,dz\cr
	\ar\ar +
	\bigg|
	\frac{\beta t}{\overline{\nu}(\beta t)}
	\int_x^y f_n(z)\,\underline{n}_t(dz)
	-
	\alpha\int_x^y f_n(z)V(z)\,dz
	\bigg| .
	\eeqlb
	For any $y>x\geq0$ and $\delta>0$, by
	$\overline{\nu}\in\mathrm{RV}_{-\alpha}^{\infty}$ and
	Theorem~\ref{MainThm01}, we have as $t\to\infty$,
	\beqlb\label{eqn.4004}
	\sup_{z\in[x,y]}
	\bigg|
	\frac{\beta t}{\overline{\nu}(\beta t)}
	\cdot
	\frac{\overline{\nu}(\beta t+z)}{\beta t+z}
	-1
	\bigg|
	\to0
	\quad\mbox{and}\quad
	\sup_{z\in[x,y]}
	\bigg|
	\frac{\beta t}{\overline{\nu}(\beta t)}
	\cdot
	\underline{n}_t\big(z+\varDelta_\delta\big)
	-
	\alpha\int_z^{z+\delta}V(w)\,dw
	\bigg|
	\to0.
	\eeqlb
	Thus the first two terms on the right-hand side of
	\eqref{eqn.4003} can be bounded uniformly for all sufficiently large $t$ by
	\beqnn
	\sup_{z\in(x,y]}|f(z)-f_n(z)|
	\cdot
	\bigg(\frac{\beta t}{\overline{\nu}(\beta t)}
	\underline{n}_t\big(x+\varDelta_{y-x}\big)  +
	\alpha\int_x^y V(z)\,dz\bigg) \leq C\cdot \sup_{z\in(x,y]}\big|f(z)-f_n(z)\big|,
	\eeqnn
	 which tends to $0$ as $n\to\infty$ by \eqref{eqn.302}. It remains to prove that the third term also vanishes as
	$t\to\infty$. For fixed $n$, by the definition of $f_n$ and the triangle inequality,
	\beqnn
	\lefteqn{ \bigg| \frac{\beta t}{\overline{\nu}(\beta t)} \int_x^y f_n(z)\,\underline{n}_t(dz) - 	\alpha\int_x^y f_n(z)V(z)\,dz \bigg| }
	\cr
	\ar\leq\ar
	\Big|f\big([nx]/n\big)\Big|\cdot 
	\bigg|
	\frac{\beta t}{\overline{\nu}(\beta t)}
	\underline{n}_t\bigg(
	x+\varDelta_{\frac{[nx]+1}{n}-x}
	\bigg)
	-
	\alpha\int_x^{([nx]+1)/n}V(z)\,dz
	\bigg|
	\cr\cr
	\ar\ar+
	\sum_{k=[nx]+1}^{[ny]-1}
	\Big|f(k/n)\Big| \cdot
	\bigg|
	\frac{\beta t}{\overline{\nu}(\beta t)}
	\underline{n}_t\big(k/n+\varDelta_{1/n}\big)
	-
	\alpha\int_{k/n}^{(k+1)/n}V(z)\,dz
	\bigg|
	\cr\cr
	\ar\ar+
	\Big|f\big([ny]/n\big)\Big|\cdot 
	\bigg|
	\frac{\beta t}{\overline{\nu}(\beta t)}
	\underline{n}_t\bigg(
	[ny]/n+\varDelta_{y-[ny]/n}
	\bigg)
	-
	\alpha\int_{[ny]/n}^{y}V(z)\,dz
	\bigg|,
	\eeqnn 
	which goes to $0$, as $t\to\infty$, by applying the second limit in \eqref{eqn.4004} to each nonempty
	interval.
Combining the preceding estimates, first letting $t\to\infty$ and then $n\to\infty$, proves the desired convergence. This completes the proof. 
\qed

 We now turn to the proof of Corollary~\ref{Main.Corollary02}. 
  In view of \eqref{eqn.30111},  for any large $t,R>0$ with $t>2R$, we may decompose the targeted local probability as 
 \beqlb\label{eqn.605}
 \frac{\beta t+x}{\overline{\nu}(\beta t+x)}\cdot \mathbf{P}_y\big( X_t\in x+\varDelta_\delta,\, \tau_0^->t \big)  = \overline{\tt d}\cdot  I_1^{y,\delta}(t,x) + I_2^{y,\delta}(t,R,x) + I_3^{y,\delta}(t,R,x) +I_4^{y,\delta}(t,R,x), \quad
 \eeqlb
 with the four terms on the right-hand side given by
 \beqnn
 I_1^{y,\delta}(t,x)\ar:=\ar	\frac{\beta t +x}{\overline\nu(\beta t+x)}\cdot \underline{n}_t\big(  \big((x-y)^+,(x-y+\delta)^+] \big),\cr
 I_2^{y,\delta}(t,R,x)\ar:=\ar \frac{\beta t +x}{\overline\nu(\beta t+x)}  \int_0^R ds\int_0^y \underline{n}_{t-s}\big( \big((x-y+z)^+, (x-y+z+\delta)^+\big] \big) \, \overline{n}_s(dz) ,\cr
 I_3^{y,\delta}(t,R,x)\ar:=\ar \frac{\beta t +x}{\overline\nu(\beta t+x)} \int_{t-R}^t ds\int_0^y \underline{n}_{t-s}\big( \big((x-y+z)^+, (x-y+z+\delta)^+\big] \big) \, \overline{n}_s(dz) , \cr
 I_4^{y,\delta}(t,R,x)\ar:=\ar\frac{\beta t +x}{\overline\nu(\beta t+x)}\int_R^{t-R} ds\int_0^y \underline{n}_{t-s}\big( \big((x-y+z)^+, (x-y+z+\delta)^+\big] \big) \, \overline{n}_s(dz) .
 \eeqnn
 The asymptotics of these four terms are established in the next four lemmas, respectively. Their proofs are deferred to Section~\ref{Sec.402}.

 \begin{lemma}\label{Lemma.601}
	For any $\delta,y>0$, we have as $t\to\infty$, uniformly in $x\geq 0$,
	\beqnn
	I_1^{y,\delta}(t,x) 
    \to \widetilde{I}_1^{y,\delta}(x):= \alpha \cdot   \int_{(x-y)^+}^{(x-y+\delta)^+} V(u)\, du .
	\eeqnn
 \end{lemma}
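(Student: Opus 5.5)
We derive Lemma~\ref{Lemma.601} directly from Theorem~\ref{MainThm01}, which has already been established uniformly in the spatial level and in the interval length. Set $u:=(x-y)^+$ and $\delta':=(x-y+\delta)^+-(x-y)^+$. Then $I_1^{y,\delta}(t,x)$ is the normalised mass $\underline{n}_t(u+\varDelta_{\delta'})$, and $\delta'$ equals $\delta$ when $x\geq y$ and equals $(x-y+\delta)^+\in[0,\delta]$ otherwise. The interval length is therefore not bounded away from zero when $x$ is close to $y-\delta$, so Theorem~\ref{MainThm01} cannot be applied verbatim. We split into two cases.

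\emph{Case $x\geq y$.} Here $u=x-y$ and $\delta'=\delta$. Theorem~\ref{MainThm01} gives, uniformly in $x\geq y$,
\[
\underline{n}_t\big(x-y+\varDelta_\delta\big)\sim \alpha\int_{x-y}^{x-y+\delta}\frac{\overline\nu(\beta t+z)}{\beta t+z}V(z)\,dz .
\]
The ratio $\frac{\beta t+x}{\overline\nu(\beta t+x)}\cdot\frac{\overline\nu(\beta t+z)}{\beta t+z}$ tends to $1$ uniformly in $x\geq0$ and $z\in[x-y,x-y+\delta]$. This holds because $|z-x|\leq y+\delta$ is bounded while $\beta t+x\geq\beta t\to\infty$, and $\overline\nu\in\mathrm{RV}^\infty_{-\alpha}$ is monotone. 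We use here the same uniform convergence as displayed in the proof of Theorem~\ref{MainThm01}, applied with shift $y+\delta$. Since $\int V\leq V(\infty)\delta$ is bounded, the relative error becomes an absolute error tending to $0$ uniformly, which gives the limit $\widetilde I_1^{y,\delta}(x)$.

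\emph{Case $x<y$.} Now $x$ ranges over the compact set $[0,y]$, and $u=0$ with $\delta'=(x-y+\delta)^+\leq\delta$. Uniformity in $x$ is then equivalent to uniformity of
\[
\frac{\beta t}{\overline\nu(\beta t)}\,\underline n_t(\varDelta_{h})\to\alpha\int_0^hV
\]
over $h\in[0,\delta]$, because the prefactor ratio $\frac{(\beta t+x)\overline\nu(\beta t)}{\beta t\,\overline\nu(\beta t+x)}\to1$ uniformly on $x\in[0,y]$. For fixed $h>0$ this limit follows from Theorem~\ref{MainThm01} (or Corollary~\ref{Corollary.101}), and the limit function $h\mapsto\alpha\int_0^hV$ is continuous and nondecreasing. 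We can therefore apply the Dini/P\'olya argument: fix a grid $0=h_0<\dots<h_m=\delta$ with mesh $1/n$. Monotonicity of $h\mapsto\underline n_t(\varDelta_h)$ sandwiches each $h$ between neighbouring grid points. Pointwise convergence at the grid points, together with the oscillation bound $\alpha V(\infty)/n$ of the limit, gives a uniform error of at most $2\alpha V(\infty)/n$ eventually; we then let $n\to\infty$. At $h=0$ both sides vanish.

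The only real obstacle is this degeneration of the interval length near $x=y-\delta$, which falls outside the $\delta\geq\delta_0$ uniformity of Theorem~\ref{MainThm01}. The monotone-sandwich argument handles it, since the target is continuous in the endpoint and only a compact range of $x$ is involved. Combining the two cases yields the claimed uniform convergence over $x\geq0$.
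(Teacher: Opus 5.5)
Your argument is correct, and it differs from the paper's only in how the degenerate short intervals are handled.

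The paper proves Lemmas~\ref{Lemma.601} and~\ref{Lemma.602} together. It fixes a threshold $\lambda\in(0,\delta\wedge1)$ and splits according to whether the interval length $(x-u+\delta)^+-(x-u)^+$ exceeds $\lambda$. For long intervals it invokes the uniformity of Theorem~\ref{MainThm01} in $\delta\ge\lambda$. For short intervals it bounds both terms by $C\,V(\infty)\lambda$: the entrance-law term via monotonicity in the length and Theorem~\ref{MainThm01} at length $\lambda$, and the limit term via $V\le V(\infty)$. It then lets $\lambda\to0$.

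You instead split by $x\ge y$ versus $x<y$. In the second regime you observe that the left endpoint is pinned at $0$. There you upgrade convergence at each fixed length $h$ to uniform convergence over $h\in[0,\delta]$ by a P\'olya-type monotone sandwich.

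What each buys: your route needs only fixed-length convergence in the degenerate regime, but it relies on the special structure of that regime. The paper's threshold argument is shorter and applies verbatim to arbitrary left endpoints $(x-u)^+$ and shifted times $t-s$, $s\in[0,R]$. That is why it disposes of Lemma~\ref{Lemma.602} at the same time.
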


 \begin{lemma}\label{Lemma.602}
	For any $\delta,y>0$ and $R>0$, we have as $t\to\infty$, uniformly in $x\geq 0$,
	\beqnn
	I_2^{y,\delta}(t,R,x) 
	\ar\to\ar  \widetilde{I}_2^{y,\delta}(R,x):= \alpha \cdot  \int_0^R ds\int_0^y    \overline{n}_s(dz)
	\int_{(x-y+z)^+}^{(x-y+z+\delta)^+}V(u)\,du  . 
	\eeqnn
 \end{lemma}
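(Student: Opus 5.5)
The plan is to treat the integrand pointwise in $(s,z)\in[0,R]\times[0,y]$ with Theorem~\ref{MainThm01}, and to control uniformity with dominated convergence. The uniform bounds will come from the boundedness of $V$ and Lemma~\ref{Lem5.1}.

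Fix $s\in[0,R]$ and $z\in[0,y]$, and write $u:=(x-y+z)^+$ and $\delta':=(x-y+z+\delta)^+-u\in[0,\delta]$. The quantity $\underline{n}_{t-s}\big((u,u+\delta']\big)$ is the entrance law at time $t-s$ of an interval whose length may be small or zero. Theorem~\ref{MainThm01} needs lengths bounded below, so I would not apply it to this interval directly. Instead I enlarge or shrink by a fixed amount. Since $V$ is bounded, $\alpha\int_u^{u+\delta'}V$ changes by at most $\alpha V(\infty)\eta$ when the endpoints move by $\eta$. Monotonicity plus Theorem~\ref{MainThm01}, applied to intervals of length $\ge\eta$ (for example $(u-\eta,u+\delta'+\eta]$ for the upper bound, with differences of two such intervals for the lower bound), then gives
\[
\frac{\beta(t-s)+u}{\overline\nu(\beta(t-s)+u)}\,\underline n_{t-s}\big((u,u+\delta']\big)=\alpha\int_u^{u+\delta'}V(w)\,dw+o(1)+O(\eta),
\]
uniformly in $u\ge0$, $\delta'\in[0,\delta]$ and $s\in[0,R]$. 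For the lower bound it is cleanest to use the first-form asymptotic together with the second display of Theorem~\ref{MainThm01}, which holds uniformly in $\delta$ and $x$, and then subtract.

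Next I replace the normalisation. Because $\overline\nu\in\mathrm{RV}^\infty_{-\alpha}$, and $|(\beta t+x)-(\beta(t-s)+u)|\le \beta R+y$ while both quantities are at least of order $\beta t$, the ratio
\[
\frac{\beta t+x}{\overline\nu(\beta t+x)}\cdot\frac{\overline\nu(\beta(t-s)+u)}{\beta(t-s)+u}
\]
tends to $1$ uniformly in $x\ge0$, $s\le R$ and $z\le y$. This uses the uniform convergence theorem for regular variation, since $\beta t+x\to\infty$ uniformly. Combining the two steps, the integrand of $I_2^{y,\delta}$ equals $\alpha\int_{(x-y+z)^+}^{(x-y+z+\delta)^+}V+o(1)+O(\eta)$ uniformly. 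The same estimates give the uniform domination $\le C(1+\delta)V(\infty)$.

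Finally, I integrate against $\int_0^R ds\,\overline n_s(dz)$ restricted to $z\le y$. Its total mass is $\int_0^R\overline n(\zeta>s)\,ds$ (at most $\int_0^R \overline n_s((0,y])\,ds$), and this is finite. To see finiteness, note that \eqref{eqn.447} gives $\int_0^\infty\overline n(\epsilon_s\le y,\zeta>s)\,ds\le\widehat V(y)<\infty$. Hence the error $\big(o(1)+O(\eta)\big)\widehat V(y)$ vanishes after letting $t\to\infty$ and then $\eta\to0$, uniformly in $x$. This yields the limit $\widetilde I_2^{y,\delta}(R,x)$.

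The main obstacle is the degenerate short intervals near $x\approx y$, where $\delta'$ can be arbitrarily small. The $\eta$-enlargement argument, using boundedness of $V$ and the uniform-in-$\delta$ statement of Theorem~\ref{MainThm01} with $\delta_0=\eta$, is what I expect to handle this.
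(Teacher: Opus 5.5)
Your proposal is correct and is essentially the paper's argument. The paper also swaps the normalisation $\frac{\beta t+x}{\overline\nu(\beta t+x)}$ for the one at $\beta(t-s)+(x-y+z)^+$ via regular variation, handles short intervals through monotonicity in the length, boundedness of $V$ and Theorem~\ref{MainThm01} with a length bounded below (it splits into lengths $>\lambda$ and $\le\lambda$ where you enlarge and subtract), and then integrates against the finite measure $ds\,\overline n_s(dz)$ on $[0,R]\times[0,y]$. One slip: the mass of that measure equals $\int_0^R\overline n_s((0,y])\,ds$, which is at most $\widehat V(y)$ by \eqref{eqn.447}, so the inequality in your parenthetical is reversed, though your subsequent finiteness argument is right.
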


\begin{lemma}\label{Lemma.603}
	For any $\delta,y>0$ and $R>0$, we have as $t\to\infty$, uniformly in $x\geq 0$,
	\beqnn
	I_3^{y,\delta}(t,R,x) 
	\ar\to\ar \widetilde{I}_3^{y,\delta}(R,x):=  \alpha \cdot   \int_0^y\widehat{V}(u) \,du 
	\int_0^R\underline{n}_s\big( ((x-y+u)^+,(x-y+u+\delta)^+]\big)\,ds . 
	\eeqnn
\end{lemma}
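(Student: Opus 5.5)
The plan is to swap the order of integration and use the large-time asymptotics of $\overline n_{t-s}$ near the origin. We start from
\[
I_3^{y,\delta}(t,R,x)=\frac{\beta t+x}{\overline\nu(\beta t+x)}\int_0^R dr\int_0^y \underline n_r\big(((x-y+z)^+,(x-y+z+\delta)^+]\big)\,\overline n_{t-r}(dz),
\]
obtained by the substitution $r=t-s$. Here $t-r\in[t-R,t]\to\infty$ while $r$ ranges only over the bounded window $[0,R]$. For each fixed $r$ we view the inner integral as $\int_0^y g_{r,x}(z)\,\overline n_{t-r}(dz)$, with
\[
g_{r,x}(z):=\underline n_r\big(((x-y+z)^+,(x-y+z+\delta)^+]\big).
\]
Lemma~\ref{Lem5.4}, upgraded to a vague convergence statement exactly as in Corollary~\ref{Corollary.101}, gives
\[
\frac{\beta(t-r)}{\overline\nu(\beta(t-r))}\,\overline n_{t-r}(dz)\to\alpha\,\widehat V(z)\,dz \quad\text{on }[0,y].
\]
Since $\overline\nu\in\mathrm{RV}^\infty_{-\alpha}$ and $r\le R$, $x\ge0$, we also have
\[
\frac{\beta t+x}{\overline\nu(\beta t+x)}\cdot\frac{\overline\nu(\beta(t-r))}{\beta(t-r)}\to ?
\]
This is the first snag. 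The prefactor in $I_3$ is normalized by $\beta t+x$, not by $\beta t$. For $x$ of order $t$ or larger, the ratio $\frac{(\beta t+x)\overline\nu(\beta t)}{\beta t\,\overline\nu(\beta t+x)}$ is large. The saving point is that $g_{r,x}$ is then tiny: by Theorem~\ref{MainThm00}, $g_{r,x}(z)\lesssim\underline n(\zeta\wedge r)\,\nu(x-y+z+\varDelta_\delta)\approx\alpha\delta\,\overline\nu(x)/x$. We therefore split into the regimes $x\le K$ and $x>K$, with $K$ large but fixed.

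For $x\le K$ the prefactor ratio tends to $1$ uniformly. The function $z\mapsto g_{r,x}(z)$ is bounded by $\underline n(\zeta>r)$, and it is monotone-interval valued in $z$, hence of bounded variation with at most countably many discontinuities. The latter form a $dz$-null set, so vague convergence against a continuous limit density applies after the standard step-function sandwich used in Corollary~\ref{Corollary.101}. To get uniformity in $x\in[0,K]$, we sandwich $g_{r,x}$ between step functions built on a mesh $1/n$ in $z$, using monotonicity of $\delta\mapsto\underline n_r(\cdot+\varDelta_\delta)$. This gives, for each $r$,
\[
\frac{\beta t+x}{\overline\nu(\beta t+x)}\int_0^y g_{r,x}(z)\,\overline n_{t-r}(dz)\to\alpha\int_0^y g_{r,x}(u)\,\widehat V(u)\,du .
\]
The outer integral is then handled by dominated convergence in $r\in[0,R]$. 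The domination comes from Proposition~\ref{Lemma.208}, applied through the Markov representation in the proof of Lemma~\ref{Lem5.4}, which gives $\sup_{t}\frac{\beta t}{\overline\nu(\beta t)}\,\overline n_t([0,y])<\infty$ for large $t$. Together with $g_{r,x}\le\underline n(\zeta>r)$, whose integral over $r\in[0,R]$ is finite by \eqref{eqn.501}, this controls the integrand. Integrating in $r$ yields $\widetilde I_3^{y,\delta}(R,x)$.

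For $x>K$ we show that both $I_3$ and $\widetilde I_3$ are uniformly small, i.e. at most $\varepsilon(K)\to0$ for all large $t$. From Theorem~\ref{MainThm00} (fixed time, uniform over $r\le R$, which needs a mild uniformity in $r$ that we get from the proof of Proposition~\ref{Prop.502}, since the bounds there are uniform in $s\in[0,R]$), we have $g_{r,x}(z)\le C\,\overline\nu(x/2)/x$ for $x>K$. Hence
\[
I_3\le C R\,\frac{\beta t+x}{\overline\nu(\beta t+x)}\cdot\frac{\overline\nu(x/2)}{x}\cdot\sup_{s\ge t-R}\overline n_s([0,y])\le C'\,\frac{(\beta t+x)\,\overline\nu(x/2)\,\overline\nu(\beta t/2)}{x\,t\,\overline\nu(\beta t+x)} .
\]
Using Potter bounds and $\overline\nu(\beta t+x)\ge c\,\overline\nu(\beta t)\,\overline\nu(x)$ up to regular-variation constants, this is $O(\overline\nu(x)^{1-o(1)})\to0$ as $K\to\infty$. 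Likewise $\widetilde I_3\le C\,\overline\nu(K/2)/K$. Checking this cross-regime bound uniformly in both $t$ and $x$ is the main obstacle. If the product inequality for $\overline\nu$ proves awkward, I would instead separate the cases $x\le\beta t$ and $x>\beta t$, and use monotonicity of $\overline\nu$ in each case.
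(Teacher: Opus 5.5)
Your argument is correct. Its large-$x$ part uses the same ingredients as the paper: Lemma~\ref{Lem5.4} for $\overline n_{t-r}((0,y])$, and a bound $\sup_{r\le R}\underline n_r(x-y+\varDelta_{y+\delta})\le C\,\overline\nu(x)/x$. For that uniform-in-$r$ bound you need the no-big-jump estimate from the proof of Proposition~\ref{Prop.501} as well as the one from Proposition~\ref{Prop.502}, not only the latter.

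The real difference is the bounded-$x$ regime. You keep $\underline n_r$ inside the test function $g_{r,x}$ and integrate it against the converging measures $\frac{\beta(t-r)}{\overline\nu(\beta(t-r))}\overline n_{t-r}(dz)$. This means handling a family of discontinuous test functions and extracting uniformity in $x$ from a step-function sandwich. That works, but you leave unstated the fact that makes it uniform. On each mesh cell, the gap between the upper and lower step values is a sum of increments of the nondecreasing function $u\mapsto\underline n_r((0,u])$ along the nondecreasing maps $z\mapsto(x-y+z)^+$ and $z\mapsto(x-y+z+\delta)^+$. These gaps therefore telescope to at most $2\,\underline n(\zeta>r)$. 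Weighted by cell masses of order $1/n$, they cost $O(1/n)$ uniformly in $x$.

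The paper avoids this by applying Fubini the other way. It writes the inner integral as $\int_0^\infty\overline n_{t-r}\big(B_y^\delta(x,z)\big)\,\underline n_r(dz)$ with $B_y^\delta(x,z)=(0,y]\cap[y-x+z-\delta,y-x+z)$, so only $\overline n_{t-r}$-masses of subintervals of $(0,y]$ appear. Lemma~\ref{Lem5.4}, together with uniform convergence of monotone functions to a continuous limit on compacts, then gives convergence uniformly over all such intervals. This covers $x\in[0,M]$, $z\ge0$ and $r\in[0,R]$ at once. Integrating against $\underline n_r(dz)\,dr$, whose total mass is at most $\underline n(\zeta)$, closes the regime with no dominated convergence in $r$ and no variation estimate.

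In the other direction, your product inequality $\overline\nu(\beta t+x)\ge c\,\overline\nu(\beta t)\,\overline\nu(x)$ is cleaner than the paper's split at $x=t$ with Potter bounds. It bounds your expression by $C(\beta/x+1/t)$ directly, so what you call the main obstacle is not really one. The rate $O(\overline\nu(x)^{1-o(1)})$ you state is not what comes out, however: for $x\gg t$ your expression is of order $\overline\nu(\beta t)/t$, which does not decay in $x$ and is small only because $t\to\infty$.
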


\begin{lemma}\label{Lemma.604}
	For any $\delta,y>0$, we have  $I_4^{y,\delta}(t,R,x) \to0 $ uniformly in $x\geq 0$ as $t\to\infty$ and then $R\to\infty$.  
%	\beqnn
%	\limsup_{R\to\infty}\limsup_{t\to\infty}\sup_{x\geq 0} I_4^{y,\delta}(t,R,x)   =0 . 
%	\eeqnn
\end{lemma}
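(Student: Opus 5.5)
The statement to prove is Lemma~\ref{Lemma.604}: the middle term $I_4^{y,\delta}(t,R,x)$ is negligible. The idea is that in $I_4$ both entrance laws are evaluated at large times. The factor $\overline{n}_s(dz)$ restricted to $z\in[0,y]$ with $s\in[R,t-R]$ is of order $\overline{\nu}(\beta s)/s$ (Lemma~\ref{Lem5.4}), and $\underline{n}_{t-s}$ of an interval of length $\delta$ is of order $\overline{\nu}(\beta(t-s)+x)/(\beta(t-s)+x)$ (Theorem~\ref{MainThm01}). Requiring two independent long survivals costs an extra factor, which vanishes after normalizing by $\overline{\nu}(\beta t+x)/(\beta t+x)$.

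\textbf{Step 1: uniform upper bounds.}
\begin{itemize}
\item By Theorem~\ref{MainThm01} (fixed $\delta$, uniform in $x\ge0$), there are $C,r_0$ such that for $r\ge r_0$ and all $w\ge0$,
\[
\underline{n}_r\big(((w)^+,(w+\delta)^+]\big)\le C\,\frac{\overline{\nu}(\beta r+w^+)}{\beta r+w^+}.
\]
\item For $w=x-y+z$ with $z\in[0,y]$ we have $w^+\ge (x-y)^+$. Monotonicity of $u\mapsto \overline{\nu}(u)/u$ then bounds the right-hand side by $C\,\overline{\nu}(\beta r+(x-y)^+)/(\beta r+(x-y)^+)$, uniformly in $z$.
\item For $\overline{n}_s([0,y])$, use Lemma~\ref{Lem5.4} with $x=0$ and $\delta=y$ to get $\overline{n}_s([0,y])\le C\,\overline{\nu}(\beta s)/s$ for $s\ge R\ge r_0$. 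This needs a small check that the atom at $0$ is irrelevant, e.g.\ by enlarging the interval to $[0,y+1]$.
\end{itemize}

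\textbf{Step 2: reduce to a convolution integral.} Combining these bounds, for $R\ge r_0$,
\[
I_4\le C\int_R^{t-R}\frac{\overline{\nu}(\beta s)}{s}\cdot\frac{\beta t+x}{\overline{\nu}(\beta t+x)}\cdot\frac{\overline{\nu}(\beta(t-s)+x')}{\beta(t-s)+x'}\,ds,
\qquad x'=(x-y)^+.
\]
Since $x'\ge x-y$ and $y$ is fixed, regular variation lets us replace $x'$ by $x$ at the cost of a constant, uniformly in $x$.

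Split the integral at $t/2$.
\begin{itemize}
\item \emph{On $s\in[R,t/2]$.} Here $\beta(t-s)+x\ge(\beta t+x)/2$. By the Potter bounds (or simply monotonicity together with $\overline{\nu}(u/2)\le C\overline{\nu}(u)$), the ratio of the last two factors is bounded by a constant. What remains is at most $C\int_R^\infty \overline{\nu}(\beta s)/s\,ds$, which is finite and tends to $0$ as $R\to\infty$.
\item \emph{On $s\in[t/2,t-R]$.} We have $\overline{\nu}(\beta s)/s\le C\,\overline{\nu}(\beta t)/t$ and $(\beta t+x)/\overline{\nu}(\beta t+x)\le C\,(\beta t+x)/\overline{\nu}(\beta t+x)$. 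Bound the remaining factor using that $\overline{\nu}(u)/u$ is decreasing in $u$ and that the shifted argument $\beta(t-s)+x\ge x$, so $\int_R^{t/2}\overline{\nu}(\beta r+x)/(\beta r+x)\,dr\le \int_R^\infty \overline{\nu}(\beta r)/(\beta r)\,dr=:\epsilon_R$. The contribution is then at most
\[
C\,\frac{\overline{\nu}(\beta t)}{t}\cdot\frac{\beta t+x}{\overline{\nu}(\beta t+x)}\,\epsilon_R .
\]
Since $\overline{\nu}\in\mathrm{RV}^\infty_{-\alpha}$ with $\alpha>1$, the map $u\mapsto u/\overline{\nu}(u)$ is regularly varying with index $1+\alpha>0$. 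For $x\le \beta t$ the prefactor is bounded. For $x\ge\beta t$ it grows like $(x/t)^{1+\alpha}$, so it is \emph{not} uniformly bounded.
\end{itemize}

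\textbf{Main obstacle: uniformity in $x$ for $x\gg t$.} In this regime the crude bound from Step 1 is too lossy. The remedy is to keep $x$ inside the $\overline{n}$-side estimate, i.e.\ not to drop $x$ from the normalizing factor. For $s\in[t/2,t-R]$ the product of the three factors is
\[
\frac{\overline{\nu}(\beta s)}{s}\cdot\frac{\beta t+x}{\overline{\nu}(\beta t+x)}\cdot\frac{\overline{\nu}(\beta(t-s)+x)}{\beta(t-s)+x}.
\]
When $x\ge\beta t$, the last two factors have arguments within a factor $2$ of each other, so their ratio is bounded. The integral is then at most $C\int_{t/2}^{t}\overline{\nu}(\beta s)/s\,ds\le C\,\overline{\nu}(\beta t/2)\to0$.

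So the plan is to split $x\le\beta t$ versus $x>\beta t$:
\begin{itemize}
\item for $x\le\beta t$, use the bound of Step 2;
\item for $x>\beta t$, use the direct comparison above on both halves of the $s$-range.
\end{itemize}
Each piece is bounded by $C\big(\int_R^\infty\overline{\nu}(\beta s)/s\,ds+\epsilon_R+\overline{\nu}(\beta t/2)\big)$. Letting $t\to\infty$ and then $R\to\infty$ gives the claim.
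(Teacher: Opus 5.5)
Your proposal is correct and is essentially the paper's own argument. You bound $\underline{n}_{t-s}$ uniformly via Theorem~\ref{MainThm01} and $\overline{n}_s([0,y])$ via Lemma~\ref{Lem5.4}, and split the $s$-range at $t/2$. On $[t/2,t-R]$ you then treat small $x$, where $\frac{\beta t+x}{\overline{\nu}(\beta t+x)}\cdot\frac{\overline{\nu}(\beta s)}{s}$ is bounded, separately from large $x$, where $\frac{\beta t+x}{\overline{\nu}(\beta t+x)}\cdot\frac{\overline{\nu}(\beta(t-s)+x)}{\beta(t-s)+x}$ is bounded. That is exactly the paper's proof.

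The differences are cosmetic:
\begin{itemize}
\item The paper splits at $x=t$ rather than $x=\beta t$.
\item On $[R,t/2]$ the paper simply uses $\int_R^\infty \overline{n}_s((0,y])\,ds\to0$ from \eqref{eqn.447} instead of Lemma~\ref{Lem5.4}.
\item On your side remark about the atom at $0$: enlarging $[0,y]$ to $[0,y+1]$ would not remove such an atom. It is harmless anyway, because $\overline{n}(\epsilon_s=0,\,\zeta>s)=0$, since an excursion stays strictly positive on $(0,\zeta)$.
\end{itemize}
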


\textbf{\textit{Proof of Corollary~\ref{Main.Corollary02}.}} We establish here only that the asymptotic equivalence \eqref{eqn.Corollary02} is uniform in $x\ge0$ for each fixed $\delta>0$. The extension to uniformity in $\delta\in[\delta_0,\infty]$ follows from an argument analogous to the one employed in the proof of Theorem~\ref{MainThm01}. 

As in the proof of Theorem~\ref{MainThm01}, it suffices to prove that, as $t\to\infty$,
\beqlb\label{eqn.602}
\sup_{x\geq 0}\bigg|\frac{\beta t+x}{\overline{\nu}(\beta t+x)}\cdot \mathbf{P}_y\big( X_t\in x+\varDelta_\delta,\, \tau_0^->t \big)  - \alpha \cdot  \widehat{V}(y)\cdot \int_x^{x+\delta}  V(z)\, dz\bigg|\to 0.
\eeqlb
For any $R>0$, recall the three quantities   $\widetilde{I}_1^{y,\delta}(x)$,  $\widetilde{I}_2^{y,\delta}(R,x)$ and $\widetilde{I}_3^{y,\delta}(R,x)$ defined in Lemma~\ref{Lemma.601}-\ref{Lemma.603}, and also define 
\beqnn 
\widetilde{I}_4^{y,\delta}(R,x)  \ar:=\ar \alpha \cdot  \int_R^\infty ds\int_0^y    \overline{n}_s(dz) \int_{(x-y+z)^+}^{(x-y+z+\delta)^+}V(u)\,du,\cr
\widetilde{I}_5^{y,\delta}(R,x)  \ar:=\ar \alpha \cdot   \int_0^y\widehat{V}(u) \,du  \int_R^\infty\underline{n}_s\big( ((x-y+u)^+,(x-y+u+\delta)^+]\big)\,ds. 
\eeqnn
 Consider the following function 
\beqnn
F(z) := \alpha	\int_{(x-y+z)^+}^{(x-y+z+\delta)^+}V(u)\, du, \quad  0\leq z\leq y.
\eeqnn
One can identify that $F$ is absolutely continuous on $(0,y)$   with   Radon--Nikodym derivative 
\beqnn
 F'(z) \ar=\ar \alpha\Big(V\big((x-y+z+\delta)^+\big) -V\big( (x-y+z)^+\big)\Big)\cr 
 \ar=\ar \alpha\int_0^\infty \underline{n}_s\big( ((x-y+z)^+,(x-y+z+\delta)^+] \big)\,ds.
\eeqnn
This induces that 
\beqnn
\widetilde{I}_3^{y,\delta}(R,x) + \widetilde{I}_5^{y,\delta}(R,x)
\ar=\ar  \int_0^y \widehat V(z)F'(z)\,dz  = \int_0^y \widehat V(z)\,dF(z).
\eeqnn
In addition,  in view of \eqref{eqn.447},  we have
\beqnn
\overline{\tt d}\cdot  \widetilde{I}_1^{y,\delta}(x) +  \widetilde{I}_2^{y,\delta}(R,x) +  \widetilde{I}_4^{y,\delta}(R,x)	
\ar=\ar \overline{\tt d}\cdot F(0)+ \int_0^\infty ds 	\int_0^y F(z)\,\overline{n}_s(dz)
=   \int_{[0,y]}F(z)\widehat V(dz).
\eeqnn 
Combining these two equalities  together  yields 
\beqnn
\lefteqn{\overline{\tt d}\cdot  \widetilde{I}_1^{y,\delta}(x) + \widetilde{I}_2^{y,\delta}(R,x) + \widetilde{I}_3^{y,\delta}(R,x) +  \widetilde{I}_4^{y,\delta}(R,x)  +  \widetilde{I}_5^{y,\delta}(R,x) }\cr\cr
\ar=\ar  \int_{[0,y]}F(z)\widehat V(dz)+\int_0^y\widehat V(z)\,dF(z)
= \widehat V(y)\cdot F(y) =\alpha \cdot \widehat{V}(y)\cdot  \int_x^{x+\delta}V(u)du.
\eeqnn
Comparing this with \eqref{eqn.605} and appealing to Lemmas~\ref{Lemma.601}--\ref{Lemma.604}, we conclude that the limit \eqref{eqn.602} follows immediately provided the following two limits are proved, 
\beqlb\label{eqn.603}
\limsup_{R\to\infty}\sup_{x\geq 0} \widetilde{I}_4^{y,\delta}(R,x)   =0 
\quad \mbox{and}\quad 
\limsup_{R\to\infty}\sup_{x\geq 0} \widetilde{I}_5^{y,\delta}(R,x)  = 0 . 
\eeqlb
Indeed, since $V(z)\le V(\infty)<\infty$ for all $z\ge0$, we have
\beqnn
\sup_{x\geq 0} \widetilde{I}_4^{y,\delta}(R,x)  \ar\leq\ar \alpha \delta V(\infty) \int_R^\infty     \overline{n}_s\big([0,y]\big) \, ds,
\eeqnn
which tends to $0$ as $R\to\infty$, since $\int_0^\infty \overline{n}_s([0,y])\,ds<\infty$; see \eqref{eqn.447}. 
Moreover, we also have 
\beqnn
\sup_{x\geq 0} \widetilde{I}_5^{y,\delta}(R,x)  \leq \alpha  \int_0^y\widehat{V}(u) \,du \cdot \int_R^\infty     \underline{n}(\zeta >s) \, ds
\eeqnn
which also tends to $0$ as $R\to\infty$, since $\int_0^\infty \underline{n}(\zeta>s)\,ds = \underline{n}(\zeta)<\infty$; see \eqref{eqn.501}.
Thus the two limits in \eqref{eqn.603} hold, and consequently the limit \eqref{eqn.602} follows. This completes the proof. 
\qed

\subsection{Proofs of Lemma~\ref{Lemma.401} and \ref{Lemma.402}}
\label{Sec.401}

\begin{proposition}\label{Prop.401}
	For any $\lambda>0$ and $\eta \in (0,\frac{\alpha\wedge 2-1}{4(\alpha+1)})$, we have as $t\to\infty$,
	\beqlb 
	\sup_{x\geq-(\beta-\lambda)t}
	\frac{ \mathbf P(X_t\geq x,\mathcal{J}^{\eta(\beta t+x)}>t) }{ \overline{\nu}(\beta t+x)/(\beta t +x)}
	\to 0.
	\eeqlb
	
\end{proposition}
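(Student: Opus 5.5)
Set $y:=\beta t+x\ge\lambda t$, so the claim is that, uniformly in $y\ge \lambda t$, the probability that $X_t+\beta t\ge y$ using no jump larger than $\eta y$ is $o(\overline\nu(y)/y)$. Since $\overline\nu(y)/y\in\mathrm{RV}^\infty_{-\alpha-1}$, a polynomial-type bound $C\,(t/y^{2})^{k}$ or $C\,(t\overline\nu(\eta y)/\ldots)^{k}$ with enough powers will suffice. The plan is a truncation-plus-exponential-martingale (Fuk--Nagaev type) argument.

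\textbf{Step 1: truncation.} On $\{\mathcal J^{\eta y}>t\}$ the path of $X$ on $[0,t]$ coincides with that of the truncated L\'evy process $X^{(\eta y)}$, obtained by deleting all jumps larger than $\eta y$. Its compensator is shifted by $-\int_{(\eta y,\infty)}w\,\nu(dw)$, so
\[
\mathbf E\big[X^{(\eta y)}_t\big]+\beta t=-t\int_{\eta y}^\infty w\,\nu(dw)\le 0 .
\]
Hence the probability in question is at most $\mathbf P\big(\widetilde X_t\ge y\big)$, where $\widetilde X_t:=X^{(\eta y)}_t-\mathbf E[X^{(\eta y)}_t]$ is a centered L\'evy process whose positive jumps are bounded by $\eta y$.

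\textbf{Step 2: Chernoff bound.} For $h>0$, Markov's inequality gives
\[
\mathbf P(\widetilde X_t\ge y)\le \exp\Big\{-hy+t\int_{-\infty}^{\eta y}\big(e^{hw}-1-hw\big)\,\nu(dw)+\tfrac12\sigma^2h^2t\Big\}.
\]
Split the integral at $|w|\le 1$, at $1<w\le y/\log y$ (say), and at $y/\log y<w\le\eta y$. Negative jumps give at most $Ch^2\int (w^2\wedge |w|)\,\nu(dw)\cdot(1+h)^{-1}$, using $e^{hw}-1-hw\le h|w|$ for $w<0$. The bounded part gives $O(h^2)$ times the second moment over $(1,y/\log y]$, which is $o(y^{2-\alpha})$ when $\alpha<2$ and $O(\log y)$ otherwise. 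The large part is bounded by $e^{h\eta y}\,\overline\nu(y/\log y)$.

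Choose $h=\gamma\log(y^{\alpha+1}/t)/y$ in the standard Nagaev fashion, where $\gamma$ is slightly above $1$ and $\eta\gamma$ is small. The exponent is then about $-\gamma\log(y^{\alpha+1}/t)+(\text{small})$, giving a bound of order $(t/y^{\alpha+1})^{\gamma'}$ with $\gamma'>1$. Because $t\le y/\lambda$, this is $O\big(y^{-\alpha\gamma'}\big)$, up to slowly varying factors. This is $o(\overline\nu(y)/y)$ once $\alpha\gamma'>\alpha+\epsilon$ in exponent accounting.

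\textbf{Main obstacle: choosing the parameters.} The delicate point is making the exponent arithmetic close uniformly over $y\ge\lambda t$. Two regimes must both be handled: $y\asymp t$, where the drift budget is tight, and $y\gg t$. The second-moment term is awkward when $1<\alpha<2$, where the variance grows like $t\,y^{2-\alpha}$.

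The constraint $\eta<(\alpha\wedge2-1)/(4(\alpha+1))$ is what makes the contribution $e^{h\eta y}$ controllable. Concretely, with $h y$ of order $(\alpha+1)\log y$, the factor $e^{h\eta y}\le y^{(\alpha+1)\eta\cdot\text{const}}$ must be beaten by the gain $y^{-(\alpha\wedge2-1)/\ldots}$. I would first try $hy=\tfrac{\alpha\wedge2-1}{2\eta(\alpha+1)}\cdot\tfrac{\alpha+1}{\,\cdot\,}$ and verify directly that the three integral contributions stay $O(1)$.

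If the single-step Chernoff bound fails to give an exponent strictly above $\alpha+1$, the fallback is Fuk--Nagaev iteration. In that approach, $\{\widetilde X_t\ge y\}$ requires either several jumps of size at least $y/\log y$, or the small-jump part exceeding $y/2$. The former has probability at most $(t\,\overline\nu(y/\log y))^2=o(\overline\nu(y)/y)$, using $t\le y/\lambda$ and $\alpha>1$.
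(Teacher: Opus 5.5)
\textbf{Verdict: there is a genuine gap in the exponent bookkeeping, though your approach can be repaired along your own lines.}

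Your truncation-and-centering step is correct, and a single Chernoff bound for the centered truncated process can prove the statement. The problem is the decisive step. Your parameter choice fails as stated, and you leave its verification open.

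\textbf{What goes wrong.}
\begin{itemize}
\item Take $h=\gamma\log(y^{\alpha+1}/t)/y$ with $\gamma$ slightly above $1$, and look at the regime $y\asymp t$ (e.g.\ $x=0$, $y=\beta t$). Then $hy=\gamma\alpha\log y+O(1)$, so the bound is only $y^{-\gamma\alpha+o(1)}$. The target $\overline\nu(y)/y$ is $y^{-\alpha-1+o(1)}$, so the condition you need is $\alpha\gamma>\alpha+1$, not $\alpha\gamma>\alpha+\epsilon$. This forces $\gamma>1+1/\alpha$, and then the large-jump term must be rechecked for such $\gamma$. That is exactly what your ``main obstacle'' paragraph leaves undone.
\item Your fallback is miscounted in the same way. $(t\,\overline\nu(y/\log y))^2=y^{2-2\alpha+o(1)}$ is $o(y^{-\alpha-1})$ only when $\alpha>3$. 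You need $k$ big jumps with $k(\alpha-1)>\alpha+1$. This is available: jumps of size at most $\eta y$ must number at least $1/(2\eta)$ to carry $y/2$, and the hypothesis gives $1/(2\eta)>2(\alpha+1)/(\alpha-1)$.
\item The negative-jump contribution is not $O(h^2)$. On the negative side only $\int(|w|\wedge w^2)\,\nu(dw)<\infty$ is assumed. Using $e^{u}-1-u\le \min(u^2/2,|u|)$ for $u\le 0$, you get only $h\,\epsilon(h)$ with $\epsilon(h)\to0$. Hence $t\,h\,\epsilon(h)=o(\log y)$ by $t\le y/\lambda$. This is harmless, but it is not $O(1)$.
\end{itemize}

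\textbf{The repair.} Take $h=c\log y/y$ with $c\in\big(\alpha+1,(\alpha-1)/\eta\big)$. This interval is nonempty, since the hypothesis gives $\eta<(\alpha-1)/(4(\alpha+1))$. Using $t\le y/\lambda$ and Potter bounds:
\begin{itemize}
\item The Gaussian part and the jumps in $(0,y/\log y]$, where $hw\le c$, contribute $O\big(th^2(\sigma^2+\int_0^{y/\log y}w^2\,\nu(dw))\big)=O\big(y^{(2-\alpha)^+-1+\epsilon}\log^2 y\big)\to0$.
\item The jumps in $(y/\log y,\eta y]$ contribute at most $t\,e^{h\eta y}\,\overline\nu(y/\log y)=O\big(y^{1+c\eta-\alpha+\epsilon}\big)\to0$, because $c\eta<\alpha-1$.
\item The negative jumps contribute $o(\log y)$, as above.
\end{itemize}
Hence $\mathbf P(\widetilde X_t\ge y)\le y^{-c+o(1)}$ uniformly in $y\ge\lambda t$, which is $o(\overline\nu(y)/y)$ because $c>\alpha+1$.

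\textbf{Comparison with the paper.} With this fix your argument is genuinely different from the paper's. The paper splits $X_t+\beta t$ into two pieces:
\begin{itemize}
\item $M$, consisting of the Brownian part, all negative jumps and positive jumps at most $1$. It has exponential moments of every positive order, so a fixed-order Chernoff bound gives $e^{-u_\lambda y/4}$.
\item $Q^{(x)}$, the compensated jumps in $(1,\eta(\beta t+x)]$. This is handled by Nagaev's Fuk--Nagaev inequality over $n$ i.i.d.\ time blocks with $n\to\infty$, plus a BDG bound on the $\kappa$-th moment for $\kappa<\alpha\wedge2$. It yields $y^{-(\kappa-1)/(4\eta)}$.
\end{itemize}
Your corrected route is self-contained and needs only $\eta<(\alpha-1)/(\alpha+1)$. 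The paper's stronger restriction on $\eta$ comes from the exponent $1/(4\eta)$ in the inequality it cites.
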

\proof Let $\widetilde{X}_t=X_t+\beta t$ for $t\geq 0$. 
The L\'evy--It\^{o}
decomposition theorem allows us to represent $\widetilde{X}$ as  
\beqnn
\widetilde X_t= M_t+Q_t^{(x)}  + \int_0^t \int_{\eta(\beta t+x)}^{\infty}
y\,N(ds,dy) - t\cdot \int_{\eta(\beta t+x)}^\infty y\,\nu(dy),
\eeqnn
where the two processes $M$ and $Q^{(x)}  $ are given by
\beqnn
M_t :=  \sigma B_t + \int_0^t
\int_{-\infty}^1 y\,\widetilde N(ds,dy)
\quad\mbox{and}\quad
Q_t^{(x)}  := \int_0^t \int_1^{\eta(\beta t+x)} y\,\widetilde{N}(ds,dy),\quad t\geq 0 . 
\eeqnn 
Here $B$ is a standard Brownian motion, $N(ds,dy)$ is a Poisson random measure on $(0,\infty)\times \mathbb{R}$ with intensity $ds\,\nu(dy)$ and $\widetilde{N}(ds,dy):=N(ds,dy)-ds\,\nu(dy)$. 
For any $t> 0$, conditioned on the event $\{\mathcal{J}^{\eta(\beta t+x)}>t\}$, there is no positive jump larger than $\eta(\beta t+x)$ before time $t$. 
This implies that $\widetilde{X}_t\le M_t+Q_t^{(x)}$, and therefore 
\beqnn
\mathbf{P}\big(X_t\geq x, \mathcal{J}^{\eta(\beta t+x)}>t \big)=
\mathbf{P}\big(\widetilde{X}_t\geq \beta t +x,\mathcal{J}^{\eta(\beta t+x)}>t\big) 
\leq 
\mathbf P\Big( M_t\geq \frac{\beta t+x}{2} \Big)
+ \mathbf P\Big(Q_t^{(x)}  \geq \frac{\beta t+x}{2}\Big).
\eeqnn
Consequently, it suffices to prove that as $t\to\infty$,
\beqlb\label{eqn.3001}
\sup_{x\geq-(\beta-\lambda)t} \frac{\mathbf P\big( M_t\geq (\beta t+x)/2 \big)}{\overline{\nu}(\beta t+x)/(\beta t +x)} \to 0
\quad \mbox{and} \quad
\sup_{x\geq-(\beta-\lambda)t} \frac{\mathbf P\big( Q_t^{(x)}  \geq (\beta t+x)/2 \big)}{\overline{\nu}(\beta t+x)/(\beta t +x)}\to 0. 
\eeqlb

\textbf{(i)} Since $M$ is a  L\'evy process with zero mean and all jumps bounded above by $1$, Theorem~25.17 in \cite[ p.~165]{Sato1999} implies that it has finite exponential moments of all non-negative orders, i.e., 
\beqnn
\varPsi_M(u):= \log\mathbf E\big[\exp\{uM_1\}\big] <\infty,\quad \forall\, u \geq 0
\eeqnn
and $\varPsi_M(0+)=\varPsi_M'(0+)=0$. Thus there exists a constant $u_\lambda>0$ such that $\varPsi_M(u_\lambda)\leq \lambda u_\lambda/4$. 
By Chebyshev's inequality, for all  $x\geq -(\beta-\lambda)t$,
\beqnn
\mathbf P\big( M_t\geq (\beta t+x)/2 \big)
\ar\leq\ar \mathbf{E}\Big[ \exp\big\{	u_\lambda M_t \big\} \Big] \cdot  \exp\Big\{	  - \frac{u_\lambda}{2}\cdot (\beta t+x)\Big\} \cr\cr
\ar=\ar  
\exp\Big\{	t\cdot \varPsi_M(u_\lambda) - \frac{u_\lambda}{2}\cdot (\beta t+x)\Big\}\cr \cr
\ar\leq\ar
\exp\Big\{ \frac{\lambda\cdot u_\lambda }{4}\cdot t
-\frac{u_\lambda}{2}\cdot (\beta t+x) \Big\} 
\leq 
\exp\Big\{
-\frac{u_\lambda }{4}\cdot (\beta t+x)
\Big\}.
\eeqnn 
Since $\beta t+x\ge\lambda t$ and $\overline{\nu}\in\mathrm{RV}_{-\alpha}^\infty$, we have, as $t\to\infty$, 
\beqnn
\sup_{x\geq-(\beta-\lambda)t}\frac{\mathbf{P}\big( M_t\geq (\beta t+x)/2 \big)}{ \overline{\nu}(\beta t+x)/(\beta t +x) } 
\leq   \sup_{x\geq-(\beta-\lambda)t}\frac{	\exp\big\{- u_\lambda/4\cdot (\beta t+x) \big\}}{ \overline{\nu}(\beta t+x)/(\beta t +x) }  \to 0 . 
\eeqnn 
%and then the first limit \eqref{eqn.3001} holds.

\textbf{(ii)}  For any integer $n\geq 1$,  the time-orthogonality of $\widetilde{N}(ds,dy)$ allows us to decompose $Q_t^{(x)}  $ into the following $n$ i.i.d. parts:
\beqnn
Q_{t,n,j}^{(x)}  := \int_{(j-1)t/n}^{jt/n} \int_1^{\eta(\beta t+x)} y\, \widetilde{N}(ds,dy),\quad j=1,\cdots, n.  
\eeqnn
Choose a constant $\kappa \in \big(4\eta(\alpha+1)+1,2\wedge \alpha\big)$. 
By the Fuk--Nagaev inequality; see Corollary 1.6 in
\cite{Nagaev1979},  there exists a constant $C>0$ such that for all $n\geq 1$, all sufficiently large $t$ and $x\geq -(\beta-\lambda)t$,
\beqlb\label{eqn.421}
\mathbf P\Big(Q_t^{(x)}  \geq \frac{\beta t+x}{2}\Big)
\ar=\ar \mathbf{P}\bigg(  \sum_{j=1}^nQ_{t,n,j}^{(x)}  \geq \frac{\beta t+x}{2} \bigg)\cr
\ar\leq\ar \sum_{j=1}^n \mathbf{P}\big(  Q_{t,n,j}^{(x)}  \geq \eta(\beta t+x) \big)
+ C\cdot \bigg( \frac{ 1}{ (\beta t+x)^\kappa} \cdot \sum_{j=1}^n \mathbf{E}\Big[ \big|Q_{t,n,j}^{(x)} \vee 0\big|^\kappa\Big] \bigg)^{1/(4\eta)} \cr
\ar=\ar n\cdot \mathbf{P}\big(  Q_{t,n,1}^{(x)} \geq \eta(\beta t+x) \big)
+  \frac{C}{ (\beta t+x)^{\frac{\kappa}{4\eta}}} \cdot \bigg(n\cdot \mathbf{E}\Big[ \big|Q_{t,n,1}^{(x)}\big|^\kappa\Big]\bigg)^{1/(4\eta)} . 
\eeqlb
Using the Burkholder-Davis-Gundy inequality, then the inequality $(a+b)^{\kappa/2}\leq a^{\kappa/2}+b^{\kappa/2}$ for all $a,b\geq 0$ and finally \eqref{eqn.LevyMeasure}, there exists a constant $C>0$ such that, for all $n\geq 1$, all sufficiently large $t$ and $x\geq -(\beta-\lambda)t$,
\beqnn
n\cdot \mathbf{E}\Big[ \big|Q_{t,n,1}^{(x)}\big|^\kappa\Big] 
\ar\leq\ar C\cdot  n\cdot \mathbf E\bigg[ \Big(  \int_0^{t/n} \int_1^{\eta(\beta t+x)} y^2\,N(ds,dy)\Big)^{\kappa/2} \bigg]\cr
\ar\leq\ar C\cdot n\cdot \mathbf E\bigg[  \int_0^{t/n} \int_1^{\eta(\beta t+x)} y^\kappa\,N(ds,dy) \bigg] \cr
\ar=\ar C\cdot t\cdot \int_1^{\eta(\beta t+x)}y^\kappa\,\nu(dy) \cr\cr
\ar\leq\ar C\cdot  t . 
\eeqnn
Moreover, the definition of $ Q_{t,n,1}^{(x)}$ and the fact $\nu\big((1,\eta(\beta t+x)] \big)<\infty$ show that 
\beqnn
Q_{t,n,1}^{(x)} = \int_0^{t/n} \int_1^{\eta(\beta t+x)} y\,N(ds,dy) - \frac{t}{n}\cdot  \int_1^{\eta(\beta t+x)} y\,\nu(dy). 
\eeqnn
This implies that, for all $n\geq 1$, all sufficiently large $t$ and $x\geq -(\beta-\lambda)t$,
\beqnn
\big\{ Q_{t,n,1}^{(x)} \geq \eta(\beta t+x)\big\} \subset \big\{ N\big((0,t/n],(1,\eta(\beta t+x)] \big) \geq 2 \big\} \subset \big\{ N\big((0,t/n],(1, \infty) \big) \geq 2 \big\} .
\eeqnn 
Since $N((0,t/n],(1,\infty))$ is Poisson distributed with rate $(t/n)\overline{\nu}(1)$, we have
\beqnn
\sup_{x\geq -(\beta-\lambda)t}  n\cdot \mathbf{P}\big(  Q_{t,n,1}^{(x)}\geq \eta(\beta t+x) \big)
\ar\leq\ar    n\cdot \mathbf{P}\big(  N \big((0,t/n],(1,\infty) \big) \geq 2\big)
\leq n\cdot \big(t/n \cdot \overline{\nu}(1) \big)^2 .
\eeqnn 
Substituting these two estimates into \eqref{eqn.421} and then using the arbitrariness of $n$, we obtain that, uniformly in  sufficiently large $t$ and $x\geq -(\beta-\lambda)t$,
\beqnn
\mathbf P\Big(Q_t^{(x)}  \geq \frac{\beta t+x}{2}\Big)
\leq C\cdot (\beta t+x)^{-\frac{\kappa-1}{4\eta}}.
\eeqnn
This induces that 
\beqnn
\sup_{x\geq-(\beta-\lambda)t} \frac{\mathbf P\big( Q_t^{(x)}\geq (\beta t+x)/2 \big)}{\overline{\nu}(\beta t+x)/(\beta t +x)} \leq C\cdot  \sup_{x\geq-(\beta-\lambda)t}  \frac{(\beta t+x)^{-\frac{\kappa-1}{4\eta}+1}}{\overline{\nu}(\beta t+x)},
\eeqnn
which goes to $0$ as $t\to\infty$, since $\overline{\nu}(z)\in\mathrm{RV}_{-\alpha}^\infty$ and $\alpha<\frac{\kappa-1}{4\eta}-1$. The second limit in \eqref{eqn.3001}  is proved.
\qed 

\begin{proposition}\label{Prop.403}
	For any $\delta>0$, there exist two constants $C,x_0>0$ such that for any  $x\geq x_0$, any subset $B_x\subset (x,\infty)$ and any random variable $\xi$,
	\beqlb\label{eqn.4008}
	\int_{B_x} \mathbf{P}\big( \xi \in -w+\varDelta_\delta \big) \,\nu(dw)
	\leq C\cdot \frac{\overline{\nu}(x)}{x}\cdot \mathbf{P}\bigg( \xi\in \bigcup_{w\in B_x } \big(-w+\varDelta_\delta\big)  \bigg)\cdot \delta.
	\eeqlb
\end{proposition}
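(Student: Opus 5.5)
Rewrite the left-hand side as an expectation over $\xi$: by Fubini,
\beqnn
\int_{B_x} \mathbf{P}\big( \xi \in -w+\varDelta_\delta \big) \,\nu(dw)
= \mathbf{E}\Big[ \nu\big( B_x \cap [-\xi, -\xi+\delta) \big) \Big],
\eeqnn
because $\xi\in -w+\varDelta_\delta=(-w,-w+\delta]$ is equivalent to $w\in[-\xi,-\xi+\delta)$. The integrand $\nu(B_x\cap[-\xi,-\xi+\delta))$ vanishes unless the interval $[-\xi,-\xi+\delta)$ meets $B_x$. That happens exactly when $\xi\in\bigcup_{w\in B_x}(-w+\varDelta_\delta)$, up to the harmless endpoint convention, which I would fix by enlarging to closed intervals. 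It is then enough to show a deterministic bound of the form
\beqnn
\sup_{u\in\mathbb{R}} \nu\big( (x,\infty)\cap [u,u+\delta) \big) \leq C\cdot \frac{\overline{\nu}(x)}{x}\cdot\delta,\qquad x\geq x_0,
\eeqnn
and take expectations over the event that $\xi$ lies in the union.

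For the deterministic bound, I would use Remark~\ref{Remark01}. By \eqref{eqn.30}, $\nu(y+\varDelta_\delta)\sim\alpha\,\delta\,\overline\nu(y)/y$ as $y\to\infty$. So there is $x_0$ with $\nu(y+\varDelta_\delta)\leq 2\alpha\delta\,\overline\nu(y)/y$ for all $y\geq x_0$. Any interval $(x,\infty)\cap[u,u+\delta)$ that is nonempty lies in $(y,y+\delta]\cup\{y\}$ with $y=\max(u,x)$, up to one endpoint. Endpoint atoms are handled by covering with two adjacent half-open intervals, or by replacing $[u,u+\delta)$ with $(y-\delta/2, y+\delta]$, which gives a constant-factor loss. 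Since $y\mapsto\overline\nu(y)/y$ is non-increasing, we get $\overline\nu(y)/y\leq\overline\nu(x)/x$ for $y\geq x$. This yields the sup bound with $C=4\alpha$, say, after enlarging $x_0$ so the bound also covers intervals of length $3\delta/2$. For those I would use \eqref{eqn.30} with $3\delta/2$ in place of $\delta$ at $y-\delta/2\geq x_0$ and absorb $\overline\nu(y-\delta/2)/(y-\delta/2)\leq 2\overline\nu(x)/x$, which holds by regular variation for large $x$.

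Combining the two steps gives
\beqnn
\mathbf{E}\big[\nu(B_x\cap[-\xi,-\xi+\delta))\big]\leq C\frac{\overline\nu(x)}{x}\delta\cdot\mathbf{P}\big(\xi\in\textstyle\bigcup_{w\in B_x}(-w+\varDelta_\delta)\big),
\eeqnn
with the union read with the appropriate half-open convention. The main obstacle is bookkeeping rather than analysis: the set $\{\xi: B_x\cap[-\xi,-\xi+\delta)\neq\emptyset\}$ has to coincide exactly with $\bigcup_{w\in B_x}(-w+\varDelta_\delta)$. It does, because $w\in[-\xi,-\xi+\delta)$ if and only if $\xi\in(-w,-w+\delta]$. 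Measurability of that union for arbitrary $B_x$ can be handled by interpreting $\mathbf{P}$ as outer probability, or by noting the union is a union of intervals and hence Borel. Uniformity in $B_x$ and in the law of $\xi$ is automatic, since $C$ and $x_0$ depend only on $\delta$ and $\nu$.
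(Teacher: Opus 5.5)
Your argument is correct, and it takes a genuinely different and shorter route than the paper's.

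\textbf{Comparison with the paper.} The paper partitions $B_x$ into the blocks $B_x(k)=B_x\cap(x+k\delta+\varDelta_\delta)$. It bounds the contribution of each block by $\nu(x+k\delta+\varDelta_\delta)$ times the probability that $\xi$ lies in that block's projected union $\bigcup_{w\in B_x(k)}(-w+\varDelta_\delta)$. It then needs an overlap count (each point lies in at most three of these projected unions) to sum over $k$ and recover the probability of the full union. You instead apply Tonelli first. Then, for each realization of $\xi$, the integrand is the $\nu$-mass of $B_x$ inside a single window of length $\delta$. The uniform window bound $\sup_{y\ge x}\nu(y+\varDelta_\delta)\le 2\alpha\delta\,\overline\nu(x)/x$ for $x\ge x_0$ finishes the proof at once. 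This bound comes from \eqref{eqn.30} and the monotonicity of $y\mapsto\overline\nu(y)/y$, which is exactly the input the paper uses. Your route needs no partition and no overlap count, and it avoids the factor $3$ in the constant.

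\textbf{Endpoint slip.} The relation $\xi\in(-w,-w+\delta]$ is equivalent to $w\in(-\xi,-\xi+\delta]=-\xi+\varDelta_\delta$, not to $w\in[-\xi,-\xi+\delta)$. With the correct interval, three things become clean:
\begin{itemize}
\item The Fubini identity reads $\mathbf{E}\big[\nu\big(B_x\cap(-\xi+\varDelta_\delta)\big)\big]$.
\item The event identity $\{B_x\cap(-\xi+\varDelta_\delta)\neq\emptyset\}=\{\xi\in\bigcup_{w\in B_x}(-w+\varDelta_\delta)\}$ holds exactly, with no convention needed.
\item The window $(x,\infty)\cap(u+\varDelta_\delta)=(\max\{x,u\},u+\delta]$ is contained in $y+\varDelta_\delta$ with $y=\max\{x,u\}\ge x$. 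So there are no endpoint atoms, no intervals of length $3\delta/2$, and no appeal to regular variation beyond \eqref{eqn.30}; $C=2\alpha$ works directly.
\end{itemize}

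\textbf{Drop the closed-interval enlargement.} Enlarging the union to closed intervals would enlarge the event on the right-hand side. That would prove only a weaker inequality than the one stated, so you should not use it. Your remark that the union is Borel is fine; the paper does not address this point at all.
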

\proof For each integer $k\geq 0$, let $B_x(k):= B_x \cap\big( x+k\delta+\varDelta_\delta\big)$. Note that $-w+\varDelta_\delta \subseteq	 \cup_{x\in B_x(k)} \big(-x+\varDelta_\delta \big)$ for any $w\in B_x(k)$, we have
\beqnn
\int_{B_x} \mathbf{P}\big( \xi \in -w+\varDelta_\delta \big) \,\nu(dw)
\ar=\ar \sum_{k=0}^\infty \int_{B_x(k)} \mathbf{P}\big( \xi \in -w+\varDelta_\delta \big) \,\nu(dw)\cr
\ar\leq\ar \sum_{k=0}^\infty  \mathbf{P}\bigg( \xi \in \bigcup_{w\in B_x(k)}\big(-w+\varDelta_\delta \big) \bigg)  \,\nu\big(x+k\delta+\varDelta_\delta\big).
\eeqnn
By \eqref{eqn.30} and the monotonicity of $\overline\nu$, there exist two constants $C,x_0>0$ such that for all $x\geq x_0$, 
\beqlb\label{eqn.4007}
\nu\big(x+k\delta+\varDelta_\delta\big)
\ar\leq\ar C\cdot \frac{\overline\nu\big(x+k\delta \big)}{x+k\delta} \cdot \delta 
\leq C\cdot \frac{\overline\nu(x)}{x}  \cdot \delta .
\eeqlb
Moreover, note that $B_x(k)\cap B_x(k+i)=\emptyset$ for any $x\geq 0$, $k\geq 0$ and $i\geq 3$, we have $\sum_{k=0}^\infty \mathbf{1}_{\{y \in \cup_{w\in B_x(k)} (-w+\varDelta_\delta )  \}} \leq 3 \cdot \mathbf{1}_{\{ y \in \cup_{w\in B_x}  (-w+\varDelta_\delta )\}}$ for all $y\in \mathbb{R}$ and then
%$\sum_{k=0}^\infty \mathbf{1}_{\{ y\in A_k(t,x,z) \}}\leq 3$ for all $y,x,z \in \mathbb{R}$ and $t\geq 0$ and then 
\beqnn
\sum_{k=0}^\infty  \mathbf{P}\bigg( \xi \in \bigcup_{w\in B_x(k)}\big(-w+\varDelta_\delta \big) \bigg) 
=\mathbf{E}\bigg[ \sum_{k=0}^\infty \mathbf{1}_{\{\xi \in \cup_{w\in B_x(k)} (-w+\varDelta_\delta )  \}}\bigg] 
\leq 3\cdot \mathbf{P}\Big( \xi\in\bigcup_{w\in B_x } \big(-w+\varDelta_\delta\big)  \Big).
\eeqnn
Combining this together with \eqref{eqn.4007} yields the desired upper bound \eqref{eqn.4008} immediately.
\qed

\medskip

\textbf{\textit{Proof of Lemma~\ref{Lemma.401}.}}  By the triangle inequality, it suffices to prove 
\beqlb\label{eqn.4005}
\limsup_{t\to\infty}\sup_{x\geq 0} \frac{\underline{n}\big(\epsilon_t\in x+\varDelta_\delta, \mathcal{J}^{\eta(\beta t+x)}>t,\zeta >t\big)}{\overline{\nu}(\beta t+x)/ (\beta t +x) } =0
\eeqlb 
and
\beqlb\label{eqn.4006}
\limsup_{T\to\infty}\limsup_{t\to\infty}
\sup_{x\geq 0} \frac{\underline{n}\big(\epsilon_t\in x+\varDelta_\delta, T<\mathcal{J}^{\eta(\beta t+x)}\leq t,\zeta >t\big)}{ \overline{\nu}(\beta t+x)/ (\beta t +x)  } =0. 
\eeqlb

\textbf{Part~I.} We first prove the limit \eqref{eqn.4005}. 
Note that $\{\epsilon_t\in x+\varDelta_\delta\}\subset\{\epsilon_t>x\}$. Hence it suffices to prove that, as $t\to\infty$, 
\beqnn
\sup_{x\geq 0} \frac{\underline{n}\big(\epsilon_t>x, \mathcal{J}^{\eta(\beta t+x)}>t,\zeta >t\big)}{ \overline{\nu}(\beta t+x)/ (\beta t +x)  }\to 0.
\eeqnn
Let $\tau_1^+:=\inf\{s>0:\epsilon_s>1\}$, with the convention $\inf\emptyset=\infty$. 
For any fixed $T>0$ and any $t>T$, the preceding supremum is bounded by 
\beqlb\label{eqn.3004}
\sup_{x\geq 0} \frac{\underline{n}\big(\epsilon_t>x, \mathcal{J}^{\eta(\beta t+x)}>t,\tau_1^+ \leq T,\zeta >t\big)}{ \overline{\nu}(\beta t+x)/ (\beta t +x) }
+ \sup_{x\geq 0} \frac{\underline{n}\big(\epsilon_t>x, \mathcal{J}^{\eta(\beta t+x)}>t,\tau_1^+ >T,\zeta >t\big)}{ \overline{\nu}(\beta t+x)/ (\beta t +x) }.
\eeqlb
Hence it suffices to prove that both of these suprema vanish as $t\to\infty$. 

\textbf{(i)}  On the event $\{\tau_1^+\le T,\; \mathcal{J}^{\eta(\beta t+x)}>t\}$, we have $1/2\le \epsilon_{\tau_1^+}\le 1+\eta(\beta t+x)$.
By the Markov property, the first supremum in \eqref{eqn.3004} equals 
\beqnn
\lefteqn{\sup_{x\geq 0}  \frac{1}{  \overline{\nu}(\beta t+x)/ (\beta t +x) } \int_0^T \int_{1/2}^{1+\eta(\beta t+x)} \underline{n}\big(\epsilon_t>x, \mathcal{J}^{\eta(\beta t+x)}>t,\zeta >t,\tau_1^+ \in ds, \epsilon_s\in dz\big)}\cr
\ar\leq \ar \sup_{x\geq 0}  \int_0^T \int_{1/2}^{1+\eta(\beta t+x)}
\frac{\mathbf P_z\big( X_{t-s}>x, \mathcal{J}^{\eta(\beta t+x)}>t-s,\tau_0^->t-s\big) }{  \overline{\nu}(\beta t+x)/ (\beta t +x) }
\underline{n}\big(\tau_1^+\in ds,\epsilon_s\in dz,\zeta>s\big) .
\eeqnn
This can be bounded by 
\beqlb\label{eqn.3002}
\underline{n}\big( \tau_1^+\leq T,\zeta>\tau_1^+ \big) \cdot  \sup_{x\geq 0}
\sup_{s\in[0,T]} \sup_{1/2\leq z\leq1+\eta(\beta t+x)}  \frac{\mathbf{P}\big( X_{t-s}>x-z, \mathcal{J}^{\eta(\beta t+x)}>t-s\big)  }{  \overline{\nu}(\beta t+x)/ (\beta t +x) }. 
\eeqlb
Firstly, we have  $ \underline{n}\big( \tau_1^+\leq T,\zeta>\tau_1^+ \big) \leq  \underline{n}\big( \overline\epsilon\geq 1 \big)<\infty $. 
Moreover, for sufficiently large $t$, one easily verifies that, for all $s\in[0,T]$ and $1/2\le z\le 1+\eta(\beta t+x)$,
\beqlb\label{eqn.3003}
\beta t+x \leq \frac{ \beta (t-s)+x-z }{1-2\eta}  
\quad \mbox{and}\quad
x-z \geq -\frac{1+\eta}{2}\cdot \beta (t-s) .
\eeqlb
This, together with $\overline{\nu}\in\mathrm{RV}_{-\alpha}^\infty$, yields that, uniformly for sufficiently large $t$, all $x\ge0$, $s\in[0,T]$, and $1/2\le z\le 1+\eta(\beta t+x)$,
\beqnn
\frac{\overline{\nu}(\beta t+x)}{ \beta t +x} \geq C\cdot \frac{\overline{\nu}(\beta (t-s)+x-z)}{ \beta (t-s) +x-z} 
\eeqnn
and 
\beqnn
\mathbf{P}\big( X_{t-s}>x-z, \mathcal{J}^{\eta(\beta t+x)}>t-s\big) 
\ar\leq\ar \mathbf{P}\big( X_{t-s}>x-z, \mathcal{J}^{\frac{\eta }{1-2\eta} \cdot(\beta (t-s)+x-z) }>t-s\big) . 
\eeqnn
Substituting these into \eqref{eqn.3002} and then using the second inequality in \eqref{eqn.3003}, we obtain
\beqnn
\lefteqn{ \sup_{x\geq 0} \frac{\underline{n}\big(\epsilon_t>x, \mathcal{J}^{\eta(\beta t+x)}>t,\tau_1^+ \leq T,\zeta >t\big)}{ \overline{\nu}(\beta t+x)/ (\beta t +x) }}\cr
\ar\leq\ar C \cdot  \sup_{x\geq 0}
\sup_{s\in[0,T]} \sup_{1/2\leq z\leq1+\eta(\beta t+x)}  \frac{\mathbf{P}\big( X_{t-s}>x-z, \mathcal{J}^{\frac{\eta }{1-2\eta} \cdot(\beta (t-s)+x-z) }>t-s\big) }{ \overline{\nu}(\beta (t-s)+x-z)/ (\beta (t-s)+x-z)} \cr
\ar\leq\ar C\cdot \sup_{s\in[0,T]} \sup_{ y\geq  -\frac{1+\eta}{2}\cdot \beta (t-s)}  \frac{\mathbf{P}\big( X_{t-s}>y, \mathcal{J}^{\frac{\eta }{1-2\eta} \cdot(\beta (t-s)+y) }>t-s\big)}{\overline{\nu}(\beta (t-s)+y)/ (\beta (t-s)+y)  },
\eeqnn
which tends to $0$ as $t\to\infty$ by Proposition~\ref{Prop.401}, since $\frac{\eta}{1-2\eta}\le\frac{\alpha\wedge2-1}{4(\alpha+1)}$.

\textbf{(ii)}  On the event $\{\tau_1^+>T\}$ we have $\epsilon_T\leq1$. 
By the Markov property,  the second supremum in \eqref{eqn.3004} is bounded by 
\beqnn 
\sup_{x\geq 0}   \int_{[0,1]}  \frac{\mathbf{P}\big( X_{t-T}>x-z,\mathcal{J}^{\eta(\beta t+x)}>t-T,\tau_0^->t-T \big)}{  \overline{\nu}(\beta t+x)/ (\beta t +x) } \, \underline{n}\big( \epsilon_T\in dz ,\zeta>T \big)  ,
\eeqnn
which can be bounded by 
\beqnn
\underline{n}\big( \epsilon_T\leq 1,\zeta>T \big) \cdot  \sup_{x\geq 0}  \frac{\mathbf{P}\big( X_{t-T}>x-1,\mathcal{J}^{\eta(\beta t+x)}>t-T \big)}{  \overline{\nu}(\beta t+x)/ (\beta t +x) }  .
\eeqnn 
An argument analogous to (but simpler than) that in \textbf{(i)} shows that the preceding supremum tends to $0$ as $t\to\infty$. 
Consequently, the second supremum in \eqref{eqn.3004} also vanishes as $t\to\infty$. 
Combining the limits obtained in these two steps yields the limit \eqref{eqn.4005}.

\textbf{Part~II.} We now prove the limit \eqref{eqn.4006}. 
On the event $\{T<\mathcal{J}^{\eta(\beta t+x)}\le t,\ \zeta>t\}$, at least one jump of size larger than $\eta(\beta t+x)$ occurs during $[T,t]$. 
Hence 
\beqnn
\underline{n}\big(\epsilon_t\in x+\varDelta_\delta, T<\mathcal{J}^{\eta(\beta t+x)}\leq t,\zeta >t\big) 
\ar\leq\ar \underline{n}\bigg( \sum_{T<s\leq t} \mathbf{1}_{\{ \Delta\epsilon_s> \eta(\beta t+x),\, \epsilon_t\in x+\varDelta_\delta,\, \zeta>t \}}  \bigg).
\eeqnn
Using the Markov property and the compensation formula \eqref{eqn.Compensation} to the right-hand side yields that  
\beqlb\label{eqn.3006}
\lefteqn{\underline{n}\bigg( \sum_{T<s\leq t} \mathbf{1}_{\{ \Delta\epsilon_s> \eta(\beta t+x),\, \epsilon_t\in x+\varDelta_\delta,\, \zeta>t \}}  \bigg)}\cr
\ar=\ar \int_T^tds
\int_0^\infty \underline{n}_s(dz) \int_{\eta(\beta t+x)}^\infty   \mathbf{P}_{z+w}\big( X_{t-s}\in x+\varDelta_\delta,\,\tau_0^->t-s \big) \,\nu(dw)  \cr
\ar\leq\ar \int_T^tds
\int_0^\infty \underline{n}_s(dz) \int_{\eta(\beta t+x)}^\infty   \mathbf{P} \big( X_{t-s}\in x-z-w+\varDelta_\delta  \big) \,\nu(dw)  \cr
\ar\leq \ar  \int_T^\infty \underline{n}(\zeta >s)\,ds \cdot \sup_{r\in[0,t]} \sup_{z\geq 0}  \int_{\eta(\beta t+x)}^\infty   \mathbf{P} \big( X_r-x+z\in -w+\varDelta_\delta \big) \,\nu(dw)  .
\eeqlb
By Proposition~\ref{Prop.403},  there exist two constants $C,t_0>0$ such that for all $t\geq t_0$, 
\beqnn
\sup_{r\in[0,t]} \sup_{z\geq 0}  \int_{\eta(\beta t+x)}^\infty   \mathbf{P} \big( X_r-x+z\in -w+\varDelta_\delta \big) \,\nu(dw)  
\leq  C\cdot \frac{\overline\nu\big(\eta(\beta t+x) \big)}{\eta(\beta t+x) } \cdot \delta .
\eeqnn
Taking this back into \eqref{eqn.3006} and then using the fact $\overline{\nu}\in \mathrm{RV}^\infty_{-\alpha}$, we obtain that for all $t\geq t_0$,
\beqnn
\sup_{x\geq 0} \frac{\underline{n}\big(\epsilon_t\in x+\varDelta_\delta, T<\mathcal{J}^{\eta(\beta t+x)}\leq t,\zeta >t\big)}{ \overline{\nu}(\beta t+x)/ (\beta t +x)  } 
\leq C\cdot  \int_T^\infty \underline{n}(\zeta >s)\,ds,
\eeqnn
 which goes to $0$ as $T\to\infty$ since $\underline{n}(\zeta)<\infty$; see \eqref{eqn.501}. 
 Hence the limit \eqref{eqn.4006} holds and the whole proof ends.
 \qed

 \medskip

 \textbf{\textit{Proof of Lemma~\ref{Lemma.402}.}}   Since all jumps occurring before $\mathcal{J}^{\eta(\beta t+x)}$ are  not larger than $\eta(\beta t+x) $, we have  for any $t>T$,
 \beqnn
 \underline{n}\big( \epsilon_t\in x+\varDelta_\delta,\mathcal{J}^{\eta(\beta t+x)}\leq T,\zeta>t \big) 
 \ar=\ar  \underline{n}\bigg( \sum_{0<s\leq T} \mathbf{1}_{\{ \Delta\epsilon_s> \eta(\beta t+x),\,  \sup_{0<r<s}\Delta\epsilon_r \leq\eta(\beta t+x),\, \epsilon_t\in x+\varDelta_\delta,\,\zeta >t \}}   \bigg)  ,
 \eeqnn
 which can be further decomposed into the following two terms
 \beqnn
 \quad \underline{n}\bigg( \sum_{0<s\leq T} \mathbf{1}_{\{ \Delta\epsilon_s > \eta(\beta t+x),\,   \epsilon_t\in x+\varDelta_\delta,\,\zeta >t \}}   \bigg) 
 -\underline{n}\bigg( \sum_{0<s\leq T} \mathbf{1}_{\{ \Delta\epsilon_s > \eta(\beta t+x),\,  \sup_{0<r<s}\Delta\epsilon_r >\eta(\beta t+x),\, \epsilon_t\in x+\varDelta_\delta,\,\zeta >t \}}   \bigg) .
 \eeqnn
	 We first consider the second term. Note that on the event $\{\Delta\epsilon_s> \eta(\beta t+x),\,  \sup_{0<r<s}\Delta\epsilon_r >\eta(\beta t+x) \}$,  there always exist at least two jumps larger than $\eta(\beta t+x)$ occurring during $(0,T\wedge \zeta]$. 
 Hence it can be bounded by   
 \beqnn
 \underline{n}\bigg( \sum_{0<s<r\leq T\wedge \zeta} \mathbf{1}_{\{ \Delta\epsilon_s > \eta(\beta t+x),\,   \Delta\epsilon_r >\eta(\beta t+x)   \}}   \bigg) 
 \leq  T\cdot \underline{n}(\zeta)\cdot \Big|\overline{\nu}\big( \eta(\beta t+x) \big)\Big|^2 ,
 \eeqnn
 which is  $o\big( \overline\nu(\beta t+x)/(\beta t+x) \big)$.
 Here this inequality follows from Proposition~\ref{Prop.203}. 
 Thus the contribution of the second term can be ignored. 
 Consequently, it suffices to prove that as $t\to\infty$, 
 \beqlb\label{eqn.3007}
  \sup_{x\geq0} \bigg| \frac{   \underline{n}\big( \sum_{0<s\leq T} \mathbf{1}_{\{ \Delta\epsilon_s > \eta(\beta t+x),\,   \epsilon_t\in x+\varDelta_\delta,\,\zeta >t \}}   \big)}{\alpha \cdot \overline\nu(\beta t+x)/(\beta t+x)} -\frac{\underline n(\zeta\wedge T)}{ \underline n(\zeta)  } \int_x^{x+\delta}V(z)\, dz  \bigg| \to 0. 
 \eeqlb
By using the Markov property and the compensation formula \eqref{eqn.Compensation} again, we have 
\beqnn
\lefteqn{ \underline{n}\bigg( \sum_{0<s\leq T} \mathbf{1}_{\{ \Delta\epsilon_s > \eta(\beta t+x),\,   \epsilon_t\in x+\varDelta_\delta,\,\zeta >t \}}   \bigg) }\cr
\ar=\ar  \int_0^T ds
\int_0^\infty   \underline{n}_s(dz) 
\int_{\eta(\beta t+x)}^\infty \mathbf P_{z+w}\big( X_{t-s}\in x+\varDelta_\delta,\tau_0^->t-s \big)  \nu(dw).
\eeqnn 
For any fixed $\kappa\in(0,1/3)$, we decompose the right-hand side into the following two parts:
\beqnn
I_1(t,x,\kappa)\ar:=\ar  \int_0^T ds \int_0^\infty  \underline{n}_s(dz) \int_{B(t,x,\kappa)}  \mathbf P_{z+w}\big( X_{t-s}\in x+\varDelta_\delta,\tau_0^->t-s \big) \, \nu(dw), \cr
I_2(t,x,\kappa)\ar:=\ar    \int_0^T ds \int_0^\infty   \underline{n}_s(dz) \int_{(1-\kappa)(\beta t+x)}^{(1+\kappa)(\beta t+x)} \mathbf P_{z+w}\big( X_{t-s}\in x+\varDelta_\delta,\tau_0^->t-s \big) \, \nu(dw). 
\eeqnn
where $B(t,x,\kappa):= \big(\eta(\beta t+x), (1-\kappa)(\beta t+x)\big]\cup \big((1+\kappa)(\beta t+x),\infty\big)$. 
It is obvious that \eqref{eqn.3007} and then \eqref{eqn4.26}  hold once we prove the next two limits:
\beqlb\label{eqn.3009}
\limsup_{t\to\infty}\sup_{x\geq 0}\frac{I_1(t,x,\kappa)}{\overline\nu(\beta t+x)/(\beta t+x)} =0 
\eeqlb
and 
\beqlb\label{eqn.3010}
\limsup_{\kappa\to0+}\limsup_{t\to\infty}\sup_{x\geq 0}\bigg|\frac{I_2(t,x,\kappa)}{\alpha\cdot\overline\nu(\beta t+x)/(\beta t+x)} -\frac{\underline n(\zeta\wedge T)}{ \underline n(\zeta)  } \int_x^{x+\delta}V(z)\, dz \bigg|=0 . 
\eeqlb

\textbf{Case $i=1$.} Firstly, by the spatial homogeneity of $X$, we have 
\beqnn
I_1(t,x,\kappa)\ar\leq \ar  \int_0^T ds \int_0^\infty  \underline{n}_s(dz) \int_{B(t,x,\kappa)}  \mathbf{P}\big( X_{t-s}\in x-z-w+\varDelta_\delta  \big) \, \nu(dw).
\eeqnn
For any $K>0$, we decompose the right-hand side into the following two terms:
\beqnn
I_1^{\leq K}(t,x,\kappa)\ar:=\ar  \int_0^T ds \int_0^K \underline{n}_s(dz) \int_{B(t,x,\kappa)}  \mathbf{P}\big( X_{t-s}\in x-z-w+\varDelta_\delta  \big) \, \nu(dw),\cr
I_1^{>K}(t,x,\kappa)\ar:=\ar  \int_0^T ds \int_K^\infty  \underline{n}_s(dz) \int_{B(t,x,\kappa)}  \mathbf{P}\big( X_{t-s}\in x-z-w+\varDelta_\delta  \big) \, \nu(dw). 
\eeqnn
For the second term, notice that $B(t,x,\kappa) \subset\big(  \eta(\beta t+x), \infty\big)$, we have 
\beqnn
I_1^{>K}(t,x,\kappa) 
\ar\leq\ar  \sup_{r\in[0,T]} \sup_{z\geq 0}\int_{\eta(\beta t+x)}^{\infty}   \mathbf{P}\big( X_{t-r}\in x-z-w+\varDelta_\delta  \big) \, \nu(dw) \cdot \int_0^T\underline{n}(\epsilon_s>K,\,\zeta>s)\,ds 
\eeqnn
By Proposition~\ref{Prop.403}, there exist two constants $C,t_0>0$ such that for all $t\geq t_0$, 
\beqnn
I_1^{>K}(t,x,\kappa)  \ar\leq\ar C\cdot  \frac{\overline\nu\big(\beta t+x\big)}{\beta t+x} \cdot \int_0^T\underline{n}(\epsilon_s>K,\,\zeta>s)\,ds . 
\eeqnn  
This induces that
\beqlb\label{eqn.4009}
\limsup_{t\to\infty}\sup_{x\geq 0}\frac{ I_1^{>K}(t,x,\kappa)}{\overline\nu(\beta t+x)/(\beta t+x)} \leq C\cdot   \int_0^T\underline{n}(\epsilon_s>K,\,\zeta>s)\,ds ,
\eeqlb
which tends to $0$ as $K\to\infty$ since  $\underline{n}(\zeta)<\infty$; see \eqref{eqn.501}. 
We now consider $I_1^{\leq K}(t,x,\kappa)$. Firstly,  
\beqnn
I_1^{\leq K}(t,x,\kappa)\ar\leq \ar  \underline{n}(\zeta)\cdot \sup_{s\in[0,T]} \sup_{z\in[0,K]}  \int_{B(t,x,\kappa)}  \mathbf{P}\big( X_{t-s}\in x-z-w+\varDelta_\delta  \big) \, \nu(dw).   
\eeqnn
Using Proposition~\ref{Prop.403} along with the fact that $B(t,x,\kappa) \subset \big(  \eta(\beta t+x), \infty\big)$ again yields that uniformly in sufficiently large $t$,
\beqnn
I_1^{\leq K}(t,x,\kappa)\ar\leq \ar C\cdot  \frac{\overline\nu\big(\beta t+x\big)}{\beta t+x} \cdot \sup_{s\in[0,T]} \sup_{z\in[0,K]}  \mathbf{P}\bigg( X_{t-s}\in \bigcup_{w\in B(t,x,\kappa) }\big(x-z-w+\varDelta_\delta\big)  \bigg).
\eeqnn
A simple calculation shows that for any sufficiently large $t$, 
\beqnn
\big|X_{t-s}+\beta(t-s)\big| \geq \frac{\kappa}{2}\cdot \beta (t-s) ,
\eeqnn
uniformly in $s\in[0,T]$, $w \in B(t,x,\kappa)$, $z\in [0,K]$ and $X_{t-s} \in x-z-w+\varDelta_\delta$. 
Consequently, 
\beqnn
\sup_{x\geq 0}\frac{ I_1^{\leq K}(t,x,\kappa)}{\overline\nu(\beta t+x)/(\beta t+x)}  \ar\leq \ar C\cdot  \sup_{s\in[0,T]}  \mathbf{P}\Big(  \big|X_{t-s}+\beta(t-s)\big| \geq \frac{\kappa}{2}\cdot  \beta (t-s) \Big),
\eeqnn
which goes to $0$ as $t\to\infty$, since $(X_t+\beta t)/t\overset{\rm a.s.}\to 0$ by the law of large numbers. 
Combining this together with \eqref{eqn.4009} yields the limit \eqref{eqn.3009} directly.

\textbf{Case $i=2$.} We now prove the limit  \eqref{eqn.3010}. We first introduce two index sets:
\beqnn
\underline{\mathcal{K}}(t,x,\kappa)\ar:=\ar  \big\{j\in \mathbb{N}:   (1-\kappa)\cdot (\beta t+x) < j\kappa\delta + \eta(\beta t+x) \leq (1+\kappa)\cdot (\beta t+x) -\kappa\delta \big\}, \cr\cr
\overline{\mathcal{K}}(t,x,\kappa)\ar:=\ar  \big\{j\in \mathbb{N}:   (1-\kappa)\cdot (\beta t+x) -\kappa\delta < j\kappa\delta + \eta(\beta t+x) \leq (1+\kappa)\cdot (\beta t+x)   \big\}. 
\eeqnn
The interval $\big((1-\kappa)\cdot (\beta t+x),(1+\kappa)\cdot (\beta t+x)\big]$ contains $ \cup_{j\in\underline{\mathcal{K}}(t,x,\kappa)} \big(  \eta(\beta t+x) + j\kappa\delta +\varDelta_{\kappa\delta} \big)$ and is covered by  $\cup_{j \in \overline{\mathcal{K}}(t,x,\kappa)} \big(  \eta(\beta t+x) + j\kappa\delta +\varDelta_{\kappa\delta} \big)$. 
Hence $ I_{21}(t,x,\kappa) \leq I_2(t,x,\kappa) \leq I_{22}(t,x,\kappa)$, where
\beqnn
I_{21}(t,x,\kappa) \ar:=\ar  \int_0^T ds \int_0^\infty   \underline{n}_s(dz)  \sum_{j\in\underline{\mathcal{K}}(t,x,\kappa)}\int_{  \eta(\beta t+x) + j\kappa\delta }^{ \eta(\beta t+x) + (j+1)\kappa\delta } \mathbf P_{z+w}\big( X_{t-s}\in x+\varDelta_\delta,\tau_0^->t-s \big) \, \nu(dw),\cr
I_{22}(t,x,\kappa) \ar:=\ar  \int_0^T ds \int_0^\infty   \underline{n}_s(dz)  \sum_{j\in\overline{\mathcal{K}}(t,x,\kappa)} \int_{  \eta(\beta t+x) + j\kappa\delta }^{ \eta(\beta t+x) + (j+1)\kappa\delta } \mathbf P_{z+w}\big( X_{t-s}\in x+\varDelta_\delta,\tau_0^->t-s \big) \, \nu(dw) . 
\eeqnn
Consequently, the limit \eqref{eqn.3010} follows immediately when we establish the following lower and upper bounds hold:
\beqlb\label{eqn.3011}
\liminf_{\kappa\to0+}\liminf_{t\to\infty} \inf_{x\geq 0}
\bigg(  \frac{ I_{21}(t,x,\kappa)}{\alpha \cdot \overline\nu(\beta t+x)/(\beta t+x)} -\frac{\underline n(\zeta\wedge T)}{ \underline n(\zeta)  } \int_x^{x+\delta}V(z)\, dz  \bigg)  \geq 0
\eeqlb
and
\beqlb\label{eqn.3012}
\limsup_{\kappa\to0+}\limsup_{t\to\infty} \sup_{x\geq 0}
\bigg(  \frac{ I_{22}(t,x,\kappa)}{\alpha \cdot  \overline\nu(\beta t+x)/(\beta t+x)} -\frac{\underline n(\zeta\wedge T)}{ \underline n(\zeta)  } \int_x^{x+\delta}V(z)\, dz  \bigg)  \leq 0.
\eeqlb 
\medskip

\textbf{(i) Lower bound \eqref{eqn.3011}.}  Firstly, for any  $j\geq0$ and $w\in \eta(\beta t+x) + j\kappa\delta +\varDelta_{\kappa \delta}$, we have 
\beqnn
x-z-w+\varDelta_\delta  \supset x-z-\eta(\beta t+x) - j\kappa\delta + \varDelta_{(1-\kappa)\delta}=: A_j(t,x,z),\quad \forall \, x,z\geq 0,
\eeqnn
which along with the spatial-homogeneity of $X$ yields that 
\beqlb\label{eqn.4010}
\mathbf P_{z+w}\big( X_{t-s}\in x+\varDelta_\delta,\tau_0^->t-s\big)
\ar=\ar \mathbf P\big( X_{t-s}\in x-z-w+\varDelta_\delta,\tau_{-z-w}^->t-s\big)\cr\cr
\ar\geq\ar  \mathbf{P}\Big( X_{t-s}\in  A_j(t,x,z),\,\tau^-_{-z-\eta(\beta t+x)-j\kappa\delta}>t-s  \Big).
\eeqlb
Substituting this into the definition of $I_{21}(t,x,\kappa)$ yields 
\beqnn
\frac{ I_{21}(t,x,\kappa)}{\alpha \cdot \overline\nu(\beta t+x)/(\beta t+x)}
\geq \widehat{I}_{21}(t,x,\kappa)\cdot  \inf_{j\in\underline{\mathcal{K}}(t,x,\kappa)}\frac{ \nu\big(\eta(\beta t+x) + j\kappa\delta +\varDelta_{\kappa \delta}\big) }{\alpha \cdot \overline\nu(\beta t+x)/(\beta t+x)\cdot \kappa\delta} ,
\eeqnn
with
\beqnn 
\widehat{I}_{21}(t,x,\kappa)\ar:=\ar  \int_0^T ds \int_0^\infty   \kappa\delta \sum_{j\in\underline{\mathcal{K}}(t,x,\kappa)} \mathbf{P}\Big( X_{t-s}\in  A_j(t,x,z),\,\tau^-_{-z-\eta(\beta t+x)-j\kappa\delta}>t-s  \Big)\,   \underline{n}_s(dz).
\eeqnn 
By the definition of $\underline{\mathcal{K}}(t,x,\kappa)$ and \eqref{eqn.30}, we have as $t\to\infty$,
\beqlb\label{eqn.4011}
\inf_{x\geq 0}  \inf_{j\in\underline{\mathcal{K}}(t,x,\kappa)}\frac{ \nu\big(\eta(\beta t+x) + j\kappa\delta +\varDelta_{\kappa \delta}\big) }{\alpha \cdot \overline\nu(\beta t+x)/(\beta t+x)\cdot \kappa\delta} 
\ar\geq\ar   \inf_{x\geq 0}\inf_{|\lambda|\leq \kappa} \frac{ \nu\big( (1+\lambda)\cdot (\beta t+x) +\varDelta_{\kappa \delta}\big) }{\alpha \cdot \overline\nu(\beta t+x)/(\beta t+x)\cdot \kappa\delta}\cr
\ar=\ar  \inf_{z\geq \beta t}\inf_{|\lambda|\leq \kappa} \frac{ \nu\big( (1+\lambda)\cdot z+\varDelta_{\kappa \delta}\big) }{\alpha \cdot \overline\nu(z)/z\cdot \kappa\delta} \to (1+\kappa)^{-\alpha-1},
\eeqlb
which goes to $1$ as $\kappa\to 0+$. 
Consequently, the limit \eqref{eqn.3011} follows if we can prove 
\beqlb\label{eqn.4014}
\liminf_{\kappa\to0+}\liminf_{t\to\infty} \inf_{x\geq 0}
\bigg(    \widehat{I}_{21}(t,x,\kappa)  -\frac{\underline n(\zeta\wedge T)}{ \underline n(\zeta)  } \int_x^{x+\delta}V(z)\, dz  \bigg)  \geq 0.
\eeqlb
For any large constant $M>0$, we have 
\beqlb\label{eqn.4013}
\widehat{I}_{21} (t,x,\kappa)  \geq \int_0^T ds \int_0^M\kappa\delta  \sum_{j\in\underline{\mathcal{K}}(t,x,\kappa)} \mathbf{P}\Big( X_{t-s}\in  A_j(t,x,z),\,\tau^-_{-z-\eta(\beta t+x)-j\kappa\delta}>t-s  \Big)\,   \underline{n}_s(dz) .
% \ar\geq \ar \kappa\delta \cdot\inf_{z\in [0,M]}  \sum_{j\in\underline{\mathcal{K}}(t,x,\kappa)} \mathbf{P}\Big( X_{t-s}\in  A_j(t,x,z),\,\tau^-_{-z-\eta(\beta t+x)-j\kappa\delta}>t-s  \Big) \cdot \int_0^T   \underline{n}\big(\epsilon_s\leq M,\,\zeta>s)\, ds
\eeqlb
The definition of $\tau_y^-$ tells that 
\beqnn
\lefteqn{\kappa\delta  \inf_{z\in [0,M]}  \sum_{j\in\underline{\mathcal{K}}(t,x,\kappa)} \mathbf{P}\Big( X_{t-s}\in  A_j(t,x,z),\,\tau^-_{-z-\eta(\beta t+x)-j\kappa\delta}>t-s  \Big)}\cr
\ar=\ar  \kappa\delta  \inf_{z\in [0,M]}  \sum_{j\in\underline{\mathcal{K}}(t,x,\kappa)} \mathbf{P}\Big( X_{t-s}\in  A_j(t,x,z),\,\underline{X}_{t-s}\geq -z-\eta(\beta t+x)-j\kappa\delta  \Big) \cr
\ar\geq\ar  \mathbf{E}\bigg[ \kappa\delta  \inf_{z\in [0,M]} \sum_{j\in\underline{\mathcal{K}}(t,x,\kappa)} \mathbf{1}_{\{ X_{t-s}\in  A_j(t,x,z),\,\underline{X}_{t-s}\geq -z-\eta(\beta t+x)-j\kappa\delta\}}  \bigg].
\eeqnn
We claim the following, to be proved later: for any $u,v\in\mathbb R$ and $s\in[0,T]$,
\beqlb\label{eqn.4012}
\lefteqn{ \inf_{z\in [0,M]} \sum_{j\in\underline{\mathcal{K}}(t,x,\kappa)} \mathbf{1}_{\{ u\in  A_j(t,x,z),\,v\geq -z-\eta(\beta t+x)-j\kappa\delta\}} }\cr
\ar\geq\ar \Big( \frac{1}{\kappa\delta} \int_x^{x+\delta} \mathbf{1}_{\{ u-v\leq w \}}\,dw -2\Big) \cdot \mathbf{1}_{\{ |u+\beta (t-s)|\leq \kappa(\beta t+x)/2\}}.
\eeqlb
Combining these estimates together yields that 
\beqnn
\lefteqn{ \kappa\delta \inf_{z\in [0,M]}  \sum_{j\in\underline{\mathcal{K}}(t,x,\kappa)} \mathbf{P}\Big( X_{t-s}\in  A_j(t,x,z),\,\tau^-_{-z-\eta(\beta t+x)-j\kappa\delta}>t-s  \Big)}\cr
\ar\geq \ar  \mathbf{E}\bigg[\Big(  \int_x^{x+\delta} \mathbf{1}_{\{ X_{t-s}-\underline{X}_{t-s}\leq z \}}\,dz -2\kappa\delta  \Big) \cdot \mathbf{1}_{\{ |X_{t-s}+\beta (t-s)|\leq \kappa(\beta t+x)/2  \}}\bigg]\cr
\ar\geq \ar \int_x^{x+\delta} \mathbf{P} \Big(  X_{t-s}-\underline{X}_{t-s}\leq z ,\,   \big|X_{t-s}+\beta (t-s)\big|\leq \frac{\kappa(\beta t+x)}{2}   \Big) \,dz - 2\kappa\delta \cr
\ar\geq \ar \int_x^{x+\delta} \mathbf{P} \big(  X_{t-s}-\underline{X}_{t-s}\leq z   \big) \,dz - \delta \cdot \mathbf{P} \Big(    \big|X_{t-s}+\beta (t-s)\big|> \frac{\kappa(\beta t+x)}{2}   \Big) - 2\kappa\delta\cr
\ar\geq\ar \int_x^{x+\delta} \mathbf{P} \big(   \overline{X}_{t-s}\leq z  \big) \,dz - \delta \cdot \mathbf{P} \Big(    \big|X_{t-s}+\beta (t-s)\big|> \frac{\kappa\beta (t-s)}{2}   \Big) - 2\kappa\delta. 
\eeqnn
Here the last inequality follows from $\beta t+x\geq \beta(t-s)$ and $X_{t-s}-\underline{X}_{t-s}\overset{\rm d}= \overline{X}_{t-s}$ by the duality lemma. 
Substituting it into the right-hand side of \eqref{eqn.4013} yields
\beqnn
\lefteqn{\widehat{I}_{21} (t,x,\kappa)   -\frac{\underline n(\zeta\wedge T)}{ \underline n(\zeta)  } \int_x^{x+\delta}V(z)\, dz}\cr\cr
\ar\geq\ar \int_0^T \underline{n}\big(\epsilon_s\leq M,\,\zeta >s\big)\, ds \int_x^{x+\delta} \mathbf{P} \big(   \overline{X}_{t-s} \leq z \big) \,dz  -  2\kappa\delta \int_0^T \underline{n}\big( \epsilon_s\leq M,\,\zeta >s\big)\, ds \cr\cr
\ar\ar - \delta  \int_0^T \underline{n}\big( \epsilon_s\leq M,\,\zeta >s\big)\cdot \mathbf{P} \Big(  \big|X_{t-s}+\beta (t-s)\big|> \frac{\kappa\beta (t-s)}{2} \Big)\, ds -\frac{\underline n(\zeta\wedge T)}{ \underline n(\zeta)  } \int_x^{x+\delta}V(z)\, dz   \cr\cr
\ar\geq\ar \int_0^T \underline{n}\big(\epsilon_s\leq M,\,\zeta >s\big)\, ds \cdot \int_x^{x+\delta}   \inf_{r\in[0,T]}\mathbf{P} \big(   \overline{X}_{t-r}\leq z  \big) \,dz  -\frac{\underline n(\zeta\wedge T)}{ \underline n(\zeta)  } \int_x^{x+\delta}V(z)\, dz  \cr\cr
\ar\ar - \delta  \int_0^T \underline{n}\big(  \zeta >s\big)\, ds\cdot \sup_{r\in[0,T]} \mathbf{P} \Big(    \big|X_{t-r}+\beta (t-r)\big|>  \frac{\kappa\beta (t-r)}{2}    \Big) -  2\kappa\delta \int_0^T \underline{n}\big( \zeta >s\big)\, ds . 
\eeqnn
Firstly, the last term on the right-hand side of the last inequality goes to $0$ as $\kappa \to 0$. 
Secondly, the law of large numbers  yields that the third term  also goes to $0$ as $t\to\infty$. Consequently,
\beqnn
\lefteqn{\liminf_{\kappa\to0+}\liminf_{t\to\infty} \inf_{x\geq 0}
	\bigg(    \widehat{I}_{21} (t,x,\kappa)  -\frac{\underline n(\zeta\wedge T)}{ \underline n(\zeta)  } \int_x^{x+\delta}V(z)\, dz  \bigg)}\cr
\ar\geq\ar  \liminf_{t\to\infty} \inf_{x\geq 0} \bigg( \int_0^T \underline{n}\big(\epsilon_s\leq M,\,\zeta >s\big)\, ds \int_x^{x+\delta}   \inf_{r\in[0,T]}\mathbf{P} \big(    \overline{X}_{t-r} \leq z  \big) \,dz-\frac{\underline n(\zeta\wedge T)}{ \underline n(\zeta)  } \int_x^{x+\delta}V(z)\, dz \bigg).
\eeqnn
Using the triangle inequality, then $\underline{n}(\zeta)<\infty$ (see \eqref{eqn.501}) and  $V(z)\leq V(\infty)<\infty$ to the right-hand side yields
\beqnn \lefteqn{\liminf_{\kappa\to0+}\liminf_{t\to\infty} \inf_{x\geq 0}
	\bigg(  \widehat{I}_{21} (t,x,\kappa)  -\frac{\underline n(\zeta\wedge T)}{ \underline n(\zeta)  } \int_x^{x+\delta}V(z)\, dz  \bigg)}\cr
\ar\geq\ar \inf_{x\geq 0} \bigg(\int_0^T \underline{n}\big(\epsilon_s\leq M,\,\zeta >s\big)\, ds - \underline n(\zeta\wedge T) \bigg) \cdot   \int_x^{x+\delta} \frac{V(z)}{ \underline n(\zeta)  }\, dz \cr
\ar\ar +  \int_0^T \underline{n}\big(\epsilon_s\leq M,\,\zeta >s\big)\, ds  \cdot \liminf_{t\to\infty} \inf_{x\geq 0}\int_x^{x+\delta}   \Big(\inf_{r\in[0,T]}\mathbf{P} \big(    \overline{X}_{t-r} \leq z  \big) - \frac{V(z)}{\underline n(\zeta)}\Big)\,dz \cr
\ar\geq \ar - \sup_{x\geq 0}  \int_x^{x+\delta} \frac{V(z)}{ \underline n(\zeta)  }\, dz  \cdot  \bigg|\int_0^T \underline{n}\big(\epsilon_s\leq M,\,\zeta >s\big)\, ds - \underline n(\zeta\wedge T)\bigg| \cr
\ar\ar - \int_0^T \underline{n}\big(\epsilon_s\leq M,\,\zeta >s\big)\, ds  \cdot \limsup_{t\to\infty} \sup_{x\geq 0}\int_x^{x+\delta}   \Big|\inf_{r\in[0,T]}\mathbf{P} \big(    \overline{X}_{t-r} \leq z  \big) - \frac{V(z)}{\underline n(\zeta)}\Big|\,dz\cr
\ar\geq\ar - C \cdot  \bigg|\int_0^T \underline{n}\big(\epsilon_s\leq M,\,\zeta >s\big)\, ds - \underline n(\zeta\wedge T)\ \bigg|  - C\cdot \limsup_{t\to\infty} \sup_{r\in[0,T]} \sup_{z\geq 0}   \bigg| \mathbf{P} \big(    \overline{X}_{t-r} \leq z  \big) -    \frac{V(z)}{\underline n(\zeta)}\bigg| ,
\eeqnn 
for some constant $C>0$ independent of $M$. 
Obviously, the first term on the right-hand side of the last inequality goes to $0$ as $M\to\infty$. 
Furthermore, since $ \overline{X}_{t-r} \overset{\rm a.s.}\to \overline{X}_\infty<\infty$ as $t\to\infty$, and  the Wiener--Hopf factorization tells that  $\mathbf{P}\big(\overline X_\infty\leq z \big)=V(z)/\underline n(\zeta)$, we have as $t\to\infty$ 
\beqlb\label{eqn.4015}
\sup_{z\geq 0}\bigg| \mathbf{P} \big(    \overline{X}_{t} \leq z  \big) -    \frac{V(z)}{\underline n(\zeta)}\bigg| \to 0,
\eeqlb
and then the second term is identically zero. 
Consequently, to complete the proof of \eqref{eqn.4014} (and therefore of \eqref{eqn.3011}), it remains only to establish the inequality \eqref{eqn.4012}.
 Fix $M,T>0$. For $x\geq0$, $s\in[0,T]$, $u,v\in\mathbb R$, and $z\in[0,M]$, let $j\in\mathbb Z$  satisfy
 \beqlb\label{eqn5.44}
 \big|u+\beta(t-s)\big| \leq \frac{\kappa}{2}(\beta t+x)
 \quad\mbox{and}\quad
 \max\{x,u-v\} 	< u+z+\eta(\beta t+x)+j\kappa\delta \leq x+(1-\kappa)\delta.
 \eeqlb
 Then, for all sufficiently large $t$, uniformly in $x\geq0$ and $s\in[0,T]$,
 \beqnn
  \eta(\beta t+x)+j\kappa\delta > x-u-z	\geq \Big(1-\frac{\kappa}{2}\Big)(\beta t+x)-\beta T-M>	(1-\kappa)(\beta t+x),
 \eeqnn
 and
 \beqnn
  \eta(\beta t+x)+j\kappa\delta \leq 	x+(1-\kappa)\delta-u-z
  \leq \Big(1+\frac{\kappa}{2}\Big)(\beta t+x)+(1-\kappa)\delta
  <(1+\kappa)(\beta t+x)-\kappa\delta.
 \eeqnn
 Since $\eta<1-\kappa$, the preceding estimates, together with \eqref{eqn5.44}, give
 \beqnn
 j\in\underline{\mathcal K}(t,x,\kappa),\quad u\in A_j(t,x,z),
 \quad 
 v>-z-\eta(\beta t+x)-j\kappa\delta.
 \eeqnn
 Consequently,
 \beqnn
 \lefteqn{ \sum_{j\in\underline{\mathcal K}(t,x,\kappa)} \mathbf 1_{\{u\in A_j(t,x,z),\, v\geq-z-\eta(\beta t+x)-j\kappa\delta \}}}\cr
 \ar\geq\ar
 \#\big\{j\in\mathbb Z: \max\{x,u-v\} < u+z+\eta(\beta t+x)+j\kappa\delta \leq x+(1-\kappa)\delta \big\}\cdot \mathbf 1_{\{|u+\beta(t-s)| \leq \kappa(\beta t+x)/2\}}.
 \eeqnn
 Since consecutive points $u+z+\eta(\beta t+x)+j\kappa\delta$ are spaced by $\kappa\delta$, the cardinality factor on the right-hand side of the preceding estimate is bounded below by
 \beqnn
 \frac{1}{	\kappa\delta}\cdot	\big[x+(1-\kappa)\delta-\max\{x,u-v\}\big]^+-1
 \ar\geq\ar \frac{1}{	\kappa\delta}\cdot  \big[x+\delta-\max\{x,u-v\}\big]^+ -2\cr\cr
 \ar=\ar \frac{1}{\kappa\delta} \int_x^{x+\delta} \mathbf 1_{\{u-v\leq w\}}\,dw -2.
 \eeqnn
 Combining these  estimates and taking the infimum over $z\in[0,M]$ proves \eqref{eqn.4012}.  
 \medskip

 \textbf{(ii) Upper bound \eqref{eqn.3012}.} 
 Similarly as in \eqref{eqn.4010},   for any $j\geq 0$ and $w\in \eta(\beta t+x) + j\kappa\delta +\varDelta_{\kappa \delta}$ we have
 \beqnn
 \mathbf{P}_{z+w}\big( X_{t-s}\in x+\varDelta_\delta,\tau_0^->t-s \big)   \leq \mathbf{P}\Big( X_{t-s}\in U_j(x,z,\kappa),\, \tau_{-z-\eta(\beta t+x)-(j+1)\kappa\delta}^->t-s \Big),
 \eeqnn
 with $U_j(x,z,\kappa):= x-z - \eta(\beta t+x)-(j+1)\kappa\delta +\varDelta_{(1+\kappa)\delta} $. 
%For any $j\geq0$ and  $w\in \eta(\beta t+x)+j\kappa\delta +\varDelta_{\kappa\delta}$, we have
% \beqnn
% x-z-w+\varDelta_\delta \in x-z - \eta(\beta t+x)-(j+1)\kappa\delta +\varDelta_{(1+\kappa)\delta} =: U_j(x,z,\kappa)
% \eeqnn
% and then
% \beqnn
%  \mathbf{P}_{z+w}\big( X_{t-s}\in x+\varDelta_\delta,\tau_0^->t-s \big) 
% \ar=\ar \mathbf P\big( X_{t-s}\in  x-z-w+\varDelta_\delta,  \tau^-_{-z-w}>t-s\big)
% \cr\cr
% \ar\leq\ar  \mathbf{P}\big( X_{t-s}\in U_j(x,z,\kappa),\, \tau^-_{-z-\eta(\beta t+x)-(j+1)\kappa\delta}>t-s\big)\cr\cr
% \ar=\ar \mathbf{P}\big( X_{t-s}\in U_j(x,z,\kappa),\,  \underline{X}_{t-s}\geq -z-\eta(\beta t+x)-(j+1)\kappa\delta \big)
% \eeqnn
 This yields that 
 \beqnn
  \frac{ I_{22}(t,x,\kappa)}{\alpha \cdot \overline\nu(\beta t+x)/(\beta t+x)}
  \leq  \widehat{I}_{22}(t,x,\kappa)\cdot \sup_{j\in\overline{\mathcal{K}}(t,x,\kappa)}\frac{ \nu\big(\eta(\beta t+x) + j\kappa\delta +\varDelta_{\kappa \delta}\big) }{\alpha \cdot \overline\nu(\beta t+x)/(\beta t+x)\cdot \kappa\delta}
 \eeqnn
 where 
 \beqnn
  \widehat{I}_{22}(t,x,\kappa)\ar:=\ar \int_0^T ds \int_0^\infty    \kappa\delta\sum_{j\in\overline{\mathcal{K}}(t,x,\kappa)}  \mathbf{P}\Big( X_{t-s}\in U_j(x,z,\kappa),\, \tau_{-z-\eta(\beta t+x)-(j+1)\kappa\delta}^->t-s \Big) \,   \underline{n}_s(dz).
 \eeqnn 
 An argument analogous to \eqref{eqn.4011} gives 
 \beqnn
  \limsup_{\kappa\to0+}  \limsup_{t\to\infty} \sup_{x\geq 0} \sup_{j\in\overline{\mathcal{K}}(t,x,\kappa)}\frac{ \nu\big(\eta(\beta t+x) + j\kappa\delta +\varDelta_{\kappa \delta}\big) }{\alpha \cdot \overline\nu(\beta t+x)/(\beta t+x)\cdot \kappa\delta} \leq \limsup_{\kappa\to0+}  (1-\kappa)^{-\alpha-1} =1. 
 \eeqnn
 Consequently, we just need to prove that 
 \beqlb\label{eqn.4017}
  \limsup_{\kappa\to0+}\limsup_{t\to\infty} \sup_{x\geq 0} \bigg(    \widehat{I}_{22}(t,x,\kappa) -\frac{\underline n(\zeta\wedge T)}{ \underline n(\zeta)  } \int_x^{x+\delta}V(z)\, dz  \bigg)  \leq 0.
 \eeqlb
 To analyze $\widehat{I}_{22}(t,x,\kappa)$, we first claim, and will prove later, that for any $u,v\in\mathbb R$ and $z\geq 0$,
 \beqlb\label{eqn.4016}
 \sum_{j\in\overline{\mathcal{K}}(t,x,\kappa)} \mathbf 1_{\{u\in U_j(x,z,\kappa),\,	v\geq -z-\eta(\beta t+x)-(j+1)\kappa\delta \}} \leq  \frac{1}{\kappa\delta} \int_x^{x+\delta} \mathbf 1_{\{u-v\leq w\}}\, dw+2.
 \eeqlb
 This  induces that 
 \beqnn
 \lefteqn{  \sum_{j\in\overline{\mathcal{K}}(t,x,\kappa)}  \mathbf{P}\Big( X_{t-s}\in U_j(x,z,\kappa),\, \tau_{-z-\eta(\beta t+x)-(j+1)\kappa\delta}^->t-s \Big) }\cr
 \ar=\ar  \sum_{j\in\overline{\mathcal{K}}(t,x,\kappa)}  \mathbf{P}\Big( X_{t-s}\in U_j(x,z,\kappa),\, \underline{X}_{t-s}\geq -z-\eta(\beta t+x)-(j+1)\kappa\delta \Big)\cr
 \ar=\ar  \mathbf{E}\bigg[ \sum_{j\in\overline{\mathcal{K}}(t,x,\kappa)} \mathbf{1}_{\{ X_{t-s}\in U_j(x,z,\kappa),\, \underline{X}_{t-s}\geq -z-\eta(\beta t+x)-(j+1)\kappa\delta\}} \bigg]\cr
 \ar\leq\ar \frac{1}{\kappa\delta}\int_x^{x+\delta} \mathbf{P}\big(  X_{t-s}-\underline{X}_{t-s} \leq  w\big) dw+2 
 = \frac{1}{\kappa\delta}\int_x^{x+\delta} \mathbf{P}\big(  \overline{X}_{t-s}  \leq  w\big) dw+2 .
 \eeqnn
 Here the last equality follows from the identity $X_{t-s}-\underline{X}_{t-s}\overset{\rm d}= \overline{X}_{t-s} $ by the duality lemma. 
 Substituting this estimate into the definition of $ \widehat{I}_{22}(t,x,\kappa)$ yields that 
 \beqnn
 \lefteqn{ \sup_{x\geq 0}
	\bigg(    \widehat{I}_{22}(t,x,\kappa) -\frac{\underline n(\zeta\wedge T)}{ \underline n(\zeta)  } \int_x^{x+\delta}V(z)\, dz  \bigg)  }\cr 
 \ar\leq\ar  2 \underline{n}(\zeta) \kappa\delta    +  \underline{n}(\zeta \wedge T) \cdot \sup_{x\geq 0} \bigg(  \int_x^{x+\delta} \sup_{s\in[0,T]}\mathbf{P}\big(  \overline{X}_{t-s} \leq  z\big) \,dz -\int_x^{x+\delta} \frac{V(z) }{ \underline n(\zeta)  } \, dz\bigg)\cr
 \ar\leq\ar 2 \underline{n}(\zeta) \kappa\delta    +  \underline{n}(\zeta \wedge T) \cdot \delta \cdot \sup_{z\geq 0} \sup_{s\in[0,T]}\bigg| \mathbf{P}\big(  \overline{X}_{t-s} \leq  z\big)- \frac{V(z) }{ \underline n(\zeta)  }\bigg|  ,
 \eeqnn
 which goes to $0$ as $t\to\infty$ and then $\kappa\to 0+$ by \eqref{eqn.4015}.
 Consequently, it remains to identify the inequality  \eqref{eqn.4016} to complete the proof of \eqref{eqn.4017}  and  therefore of \eqref{eqn.3012}. 
 Fix $u,v\in\mathbb R$ and $z\geq0$.
 Let $j\in\overline{\mathcal K}(t,x,\kappa)$ be any index satisfying $u\in U_j(x,z,\kappa) $ and $v\geq-z-\eta(\beta t+x)-(j+1)\kappa\delta$.
 By the definition of $U_j(x,z,\kappa)$ and the preceding inequality,
 \beqnn
 x-\kappa\delta<u+z+\eta(\beta t+x)+j\kappa\delta\leq x+\delta
 \quad \mbox{and}\quad 
 u-v-\kappa\delta\leq u+z+\eta(\beta t+x)+j\kappa\delta.
 \eeqnn
 Consequently,
 \beqnn
 \lefteqn{ \sum_{j\in\overline{\mathcal K}(t,x,\kappa)} \mathbf 1_{\{u\in U_j(x,z,\kappa),\,v\geq -z-\eta(\beta t+x)-(j+1)\kappa\delta\}}}\cr\cr
 \ar\leq\ar \#\big\{j\in\mathbb Z:\max\{x,u-v\}-\kappa\delta< u+z+\eta(\beta t+x)+j\kappa\delta \leq x+\delta \big\}.
\eeqnn
Since consecutive points $u+z+\eta(\beta t+x)+j\kappa\delta$ are spaced by $\kappa\delta$, the cardinality on the right-hand side is bounded above by
\beqnn
 \frac{1}{ \kappa\delta }\cdot 	\big[x+(1+\kappa)\delta-\max\{x,u-v\}\big]^+ +1 \ar\leq\ar \frac{1}{	\kappa\delta}\cdot 	\big[x+\delta-\max\{x,u-v\}\big]^+ +2 \cr
 \ar=\ar \frac{1}{\kappa\delta} \int_x^{x+\delta} \mathbf 1_{\{u-v\leq w\}}\,dw +2.
 \eeqnn
 Combining the preceding estimates proves \eqref{eqn.4016}. 
 This completes the whole proof.
 \qed

 \subsection{Proofs of Lemma \ref{Lemma.601}-\ref{Lemma.604}}
 \label{Sec.402}

 \textbf{\textit{Proofs of Lemma~\ref{Lemma.601} and \ref{Lemma.602}}}  Firstly, a simple calculation shows that as $t\to\infty$,
 \beqnn
  \sup_{s\in[0,R]} \sup_{x\geq 0} \sup_{z\in[0,y]} \Big| \frac{\beta(t-s)+(x-y+z)^+}{\beta t +x} -1 \Big|
  \leq \frac{\beta R +y}{\beta t} \to 0.
 \eeqnn
 Therefore, by  $\overline{\nu}\in \mathrm{RV}^\infty_{-\alpha}$ and the dominated convergence theorem,  the two desired limits in Lemma~\ref{Lemma.601} and \ref{Lemma.602} follow  immediately if we can prove that as $t\to\infty$,  
 \beqlb\label{eqn.606}
 \sup_{s\in[0,R]}\sup_{x\geq 0}    \sup_{z\in[0,y]} \big| \varepsilon(t-s,x,y-z)\big| = \sup_{s\in[0,R]} \sup_{x\geq 0}   \sup_{u\in[0,y]} \big| \varepsilon(t-s,x,u)\big|  \to 0.
 \eeqlb
 where 
 \beqnn
 \varepsilon(r,x,u)\ar:=\ar \frac{\beta r+(x-u)^+}{\overline\nu\big(\beta r+(x-u)^+\big)}\cdot \underline{n}_r\big(  \big((x-u)^+,(x-u+\delta)^+] \big) - \alpha    \int_{(x-u)^+}^{(x-u+\delta)^+} V(z)\, dz .
 \eeqnn 
 For any $\lambda \in(0,\delta\wedge 1)$, when $(x-u+\delta)^+ -(x-u)^+>\lambda$, this uniform convergence follows directly from Theorem~\ref{MainThm01}, i.e.,
 \beqlb\label{eqn.607}
 \limsup_{t\to\infty}\sup_{s\in[0,R]}  \sup_{x\geq 0}   \sup_{u\in[0,y]} \big| \varepsilon(t-s,x,u)\big| \cdot \mathbf{1}_{\{ (x-u+\delta)^+ -(x-u)^+ >\lambda \}} =0.
 \eeqlb
 When $(x-u+\delta)^+ -(x-u)^+\leq \lambda$, we have 
 \beqnn
  \sup_{s\in[0,R]} \sup_{x\geq 0}  \sup_{u\in[0,y]} \alpha   \int_{(x-u)^+}^{(x-u+\delta)^+} V(z)\, dz  
 \leq \alpha    V(\infty)\cdot\lambda,
 \eeqnn
 Moreover, since $\delta \mapsto \underline{n}_t(x+\varDelta_\delta)$ is non-decreasing, we use Theorem~\ref{MainThm01}  and obtain that for sufficiently large $t$, 
 \beqnn
 \lefteqn{\sup_{s\in[0,R]}  \sup_{x\geq 0}   \sup_{u\in[0,y]} \frac{\beta (t-s) +(x-u)^+}{\overline\nu\big(\beta (t-s)+(x-u)^+\big)}\cdot \underline{n}_{t-s}\big(  \big((x-u)^+,(x-u+\delta)^+] \big)}\cr
 \ar\leq\ar \frac{\beta (t-s) +(x-u)^+}{\overline\nu\big(\beta (t-s)+(x-u)^+\big)}\cdot  \underline{n}_{t-s}\big(   (x-u)^+ +\varDelta_{\lambda} \big)  \leq C  \int_{(x-u)^+}^{(x-u)^++\lambda} V(z)\,dz
 \leq C V(\infty)\cdot \lambda.
 \eeqnn
 These two estimates together yield
 \beqnn
   \limsup_{\lambda \to0+}\limsup_{t\to\infty}\sup_{s\in[0,R]}  \sup_{x\geq 0}   \sup_{u\in[0,y]} \big| \varepsilon(t-s,x,u)\big| \cdot \mathbf{1}_{\{ (x-u+\delta)^+ -(x-u)^+ \leq \lambda \}} =0.
 \eeqnn
 Combining this together with \eqref{eqn.607} directly yields \eqref{eqn.606}. This completes the proof.
 \qed

 \textbf{\textit{Proof of Lemma~\ref{Lemma.603}.}} 
 By using the change of variables $r=t-s$ and then Fubini's theorem, 
 \beqnn
 I_3^{y,\delta}(t,R,x)\ar=\ar \frac{\beta t +x}{\overline\nu(\beta t+x)} \int_0^R dr\int_0^y \underline{n}_{r}\big( \big((x-y+z)^+, (x-y+z+\delta)^+\big] \big) \, \overline{n}_{t-r}(dz) \cr
 \ar=\ar  \int_0^R dr\int_0^\infty  \Big(
 \frac{\beta t +x}{\overline\nu(\beta t+x)}\cdot \overline{n}_{t-r} \big( B_y^\delta(x,z)\big)\Big)\, \underline{n}_{r} (dz)
 \eeqnn
 and 
 \beqnn
 \widetilde{I}_3^{y,\delta}(R,x) = 
 \int_0^Rdr \int_0^\infty \Big(  \alpha    \int_{B_y^\delta(x,z)} \widehat{V}(u)\,du\Big)\, \underline{n}_r (dz), 
 \eeqnn
 where $B_y^\delta(x,z):= (0,y]\cap [y-x+z-\delta,y-x+z)$.
 This follows that 
 \beqnn
  I_3^{y,\delta}(t,R,x) -\widetilde{I}_3^{y,\delta}(R,x)  
  \ar=\ar \int_0^R dr\int_0^\infty  \Big(
  \frac{\beta t +x}{\overline\nu(\beta t+x)}\cdot \overline{n}_{t-r} \big( B_y^\delta(x,z)\big) - \alpha    \int_{B_y^\delta(x,z)} \widehat{V}(u)\,du \Big)\, \underline{n}_{r} (dz).
 \eeqnn
 For any large constant $M>y$,   using $\overline\nu \in \mathrm{RV}^\infty_{-\alpha}$, then Lemma~\ref{Lem5.4} and the uniform convergence theorem for monotone functions on compact intervals yields 
 \beqlb\label{eqn.4023}
  \lefteqn{\limsup_{t\to\infty} \sup_{x\in[0,M]} \big|  I_3^{y,\delta}(t,R,x) -\widetilde{I}_3^{y,\delta}(R,x)   \big|}\cr
  \ar\leq\ar \underline{n}(\zeta) \cdot \limsup_{t\to\infty} \sup_{r\in[0,R]} \sup_{x\in[0,M]} \sup_{z\geq 0} \Big|
  \frac{\beta t +x}{\overline\nu(\beta t+x)}\cdot \overline{n}_{t-r} \big( B_y^\delta(x,z)\big) - \alpha    \int_{B_y^\delta(x,z)} \widehat{V}(u)\,du \Big|\cr
  \ar=\ar \underline{n}(\zeta) \cdot \limsup_{t\to\infty} \sup_{r\in[0,R]} \sup_{x\in[0,M]} \sup_{z\geq 0} \Big|
  \frac{\beta (t-r)}{\overline\nu\big(\beta (t-r)\big)}\cdot \overline{n}_{t-r} \big( B_y^\delta(x,z)\big) - \alpha    \int_{B_y^\delta(x,z)} \widehat{V}(u)\,du \Big|
  =0.
 \eeqlb
 We now consider the case of $x\geq M$. Note that $B_y^\delta(x,z) \subset (0,y]$ and $B_y^\delta(x,z)\neq \emptyset$ only if $z\in x-y +\varDelta_{y+\delta}$, we have as $M\to\infty$,
 \beqlb\label{eqn.4022}
  \sup_{x\geq M}\widetilde{I}_3^{y,\delta}(R,x) 
  \ar\leq\ar  \alpha  \cdot  \int_0^y \widehat{V}(u)\,du \cdot \sup_{x\geq M} \int_0^R  \underline{n}_r(x-y +\varDelta_{y+\delta}) \, dr\cr
  \ar\leq \ar \alpha  \cdot  \int_0^y \widehat{V}(u)\,du \cdot  \int_0^R  \underline{n}\big( \epsilon_r>M-y,\,\zeta>r\big) \, dr
  \to 0.
 \eeqlb
 Moreover, using Lemma~\ref{Lem5.4} again yields that uniformly for all sufficiently large $t$,
 \beqnn
  \sup_{r\in[0,R]} \sup_{z\geq 0} \overline{n}_{t-r} \big( B_y^\delta(x,z)\big)  
  \leq  \sup_{r\in[0,R]}   \overline{n}_{t-r} \big( (0,y]\big)  \leq C\cdot  \frac{\overline\nu(\beta t)}{\beta t},
 \eeqnn
 which along with the fact that $B_y^\delta(x,z)\neq \emptyset$ only if $z\in x-y +\varDelta_{y+\delta}$ induces that
 \beqnn
 I_3^{y,\delta}(t,R,x) 
 \ar\leq\ar  C\cdot  \frac{\overline\nu(\beta t)}{\beta t} \cdot \frac{\beta t +x}{\overline\nu(\beta t+x)}\cdot \int_0^R  \underline{n}_{r} \big(x-y +\varDelta_{y+\delta}\big) \,dr\cr\cr
 \ar\leq\ar  C\cdot  \frac{\overline\nu(\beta t)}{\beta t} \cdot \frac{\beta t +x}{\overline\nu(\beta t+x)}\cdot \sup_{r\in[0,R]} \underline{n}_{r} \big(x-y +\varDelta_{y+\delta}\big)  . 
 \eeqnn
 In view of \eqref{eqn.312}-\eqref{eqn.4020} and then using  \eqref{eqn.30}, we have that for all sufficiently large $x$, 
 \beqnn
  \lefteqn{\sup_{r\in[0,R]}\underline{n}\big( \epsilon_r\in x-y +\varDelta_{y+\delta},\, \mathcal{J}^{\lambda (x-y)}\leq r,\,\zeta>r \big) }\cr\cr
  \ar\leq\ar\sup_{r\in[0,R]} \underline{n}\big( \#\mathcal{B}_r^{\lambda (x-y)},\, \epsilon_r\in x-y +\varDelta_{y+\delta},\, \zeta>r \big)
  \leq C\cdot \nu\big(x-y +\varDelta_{y+\delta}\big) 
  \leq C\cdot \frac{\overline\nu(x)}{x}.
 \eeqnn
On the other hand, similarly as in the proof of Proposition~\ref{Prop.501}, we also can prove that for all sufficiently large $x$, 
 \beqnn
 \sup_{r\in[0,R]}\underline{n}\big( \epsilon_r\in x-y +\varDelta_{y+\delta},\, \mathcal{J}^{\lambda (x-y)}> r,\,\zeta>r \big)
 \leq C\cdot  \nu\big(x-y +\varDelta_{y+\delta}\big)  \cdot \underline{n}\big(\overline\epsilon > \eta (x-y)\big). 
 \eeqnn
 Note $\underline{n}\big(\overline\epsilon  > \eta(x-y)\big)\to0$ as $x\to\infty$. Combining these two upper bounds gives that  
 \beqnn
 \sup_{x\geq M} I_3^{y,\delta}(t,R,x)  
 \ar\leq\ar  C\cdot  \frac{\overline\nu(\beta t)}{\beta t} \cdot  \sup_{x\geq M} \frac{\beta t +x}{\overline\nu(\beta t+x)}\cdot  \frac{\overline\nu(x)}{x}\cr
 \ar\leq\ar  C\cdot  \frac{\overline\nu(\beta t)}{\beta t} \cdot \Big( \sup_{x\in [M,t]} \frac{\beta t +x}{\overline\nu(\beta t+x)}\cdot  \frac{\overline\nu(x)}{x} +  \sup_{x>t} \frac{\beta t +x}{\overline\nu(\beta t+x)}\cdot  \frac{\overline\nu(x)}{x} \Big)\cr
 \ar\leq\ar  C\cdot  \frac{\overline\nu(\beta t)}{\beta t} \cdot \Big(   \frac{\beta t +t}{\overline\nu(\beta t+t)}\cdot  \frac{\overline\nu(M)}{M} +  \sup_{x>t} \frac{\beta t +x}{\overline\nu(\beta t+x)}\cdot  \frac{\overline\nu(x)}{x} \Big) 
 \eeqnn
  By $\overline{\nu} \in \mathrm{RV}^\infty_{-\alpha}$ and Potter's theorem for regularly varying functions; see e.g. Theorem 1.5.6 in \cite[p.25]{BinghamGoldieTeugels1987}, the last expression can be bounded by
  \beqnn
  C\cdot  \frac{\overline\nu(\beta t)}{\beta t} \cdot    \frac{\beta t +t}{\overline\nu(\beta t+t)}\cdot  \frac{\overline\nu(M)}{M} + C\cdot  \frac{\overline\nu(\beta t)}{\beta t}, 
  \eeqnn
  which goes to $0$ as $t\to\infty$ and then $M\to \infty$. Combining these estimates together, we obtain 
  \beqnn
  \limsup_{M\to\infty} \limsup_{t\to\infty}  \sup_{x\geq M} I_3^{y,\delta}(t,R,x)   =0. 
  \eeqnn
 Combining this, \eqref{eqn.4022} together with \eqref{eqn.4023} yields   $\sup_{x\geq 0}\big|  I_3^{y,\delta}(t,R,x)- \widetilde{I}_3^{y,\delta}(R,x) \big| \to0$ as $t\to\infty$. 
 This completes the proof.
 \qed

 \textbf{\textit{Proof of Lemma~\ref{Lemma.604}}} Recall the notation $I_4^{y,\delta}(t,R,x)$ defined in the proof of Corollary~\ref{Main.Corollary02}.   For $t>2R$, we may decompose $I_4^{y,\delta}(t,R,x)$ into the following two parts:
 \beqlb
 I_{41}^{y,\delta}(t,R,x)\ar:=\ar\frac{\beta t +x}{\overline\nu(\beta t+x)}\int_R^{t/2} ds\int_0^y \underline{n}_{t-s}\big( \big((x-y+z)^+, (x-y+z+\delta)^+\big] \big) \, \overline{n}_s(dz) , \label{eqn.4024}\\
 I_{42}^{y,\delta}(t,R,x)\ar:=\ar\frac{\beta t +x}{\overline\nu(\beta t+x)}\int_{t/2}^{t-R} ds\int_0^y \underline{n}_{t-s}\big( \big((x-y+z)^+, (x-y+z+\delta)^+\big] \big) \, \overline{n}_s(dz)  .  \label{eqn.4025}
 \eeqlb
 By Theorem~\ref{MainThm01}, there exists a constant $C>0$ such that for all sufficiently large $t$ and  all $x\geq0$,
 \beqnn
  \sup_{s\in[R,t/2]} \sup_{z\in [0,y]}\underline{n}_{t-s}\big( \big((x-y+z)^+, (x-y+z+\delta)^+\big] \big) 
  \leq C \cdot\frac{\overline\nu(\beta t+x)}{\beta t+x} . 
 \eeqnn
 Substituting this into the right-hand side of \eqref{eqn.4024}, then using \eqref{eqn.447},  
 \beqlb\label{eqn.4026}
  \limsup_{R\to\infty} \limsup_{t\to\infty} \sup_{x\geq 0} I_{41}^{y,\delta}(t,R,x) 
  \leq C \cdot   \limsup_{R\to\infty} \int_R^\infty \overline{n}\big(\epsilon_s\in(0,y],\,\zeta >s\big)\,ds   =0. 
 \eeqlb
 Applying the change of variables $r=t-s$ to  $I_{42}^{y,\delta}(t,R,x)$ gives 
 \beqnn
 I_{42}^{y,\delta}(t,R,x)
 \ar=\ar \frac{\beta t +x}{\overline\nu(\beta t+x)}\int_{R}^{t/2} dr\int_0^y \underline{n}_r\big( \big((x-y+z)^+, (x-y+z+\delta)^+\big] \big) \, \overline{n}_{t-r}(dz) \cr
 \ar\leq\ar \frac{\beta t +x}{\overline\nu(\beta t+x)}\int_{R}^{t/2} \sup_{z\in[0,y]} \underline{n}_r\big( \big((x-y+z)^+, (x-y+z+\delta)^+\big] \big) \cdot \overline{n}_{t-r}\big([0,y]\big)\, dr . 
 \eeqnn
 Using Theorem~\ref{MainThm01} again yields that   for all sufficiently large $r>R$ and  all $x\geq0$,
 \beqnn
  \sup_{z\in[0,y]} \underline{n}_r\big( \big((x-y+z)^+, (x-y+z+\delta)^+\big] \big)
  \leq C\cdot \frac{\overline\nu(\beta r+x)}{\beta r+x}. 
 \eeqnn
 In addition,  by Lemma~\ref{Lem5.4} we also have for all sufficiently large $t$ and $r\in [R,t/2]$,
 \beqnn
 \overline{n}_{t-r}\big([0,y]\big) \leq C\cdot \frac{\overline\nu(\beta(t-r))}{t-r}. 
 \eeqnn
 Combining these estimates together yields that for all sufficiently large $R$, $t>2R$ and  all $x\geq0$,
 \beqnn
 \sup_{x\geq 0} I_{42}^{y,\delta}(t,R,x)
  \ar\leq\ar C\cdot  \sup_{x\geq 0} \frac{\beta t +x}{\overline\nu(\beta t+x)}\int_{R}^{t/2}   \frac{\overline\nu(\beta r+x)}{\beta r+x} \cdot  \frac{\overline\nu(\beta(t-r))}{t-r}  \, dr \cr
  \ar\leq\ar C\cdot  \sup_{x\in[0,t]} \frac{\beta t +x}{\overline\nu(\beta t+x)}\int_{R}^{t/2}   \frac{\overline\nu(\beta r+x)}{\beta r+x} \cdot  \frac{\overline\nu(\beta(t-r))}{t-r}  \, dr\cr
  \ar\ar +C\cdot  \sup_{x\geq t} \frac{\beta t +x}{\overline\nu(\beta t+x)}\int_{R}^{t/2}   \frac{\overline\nu(\beta r+x)}{\beta r+x} \cdot  \frac{\overline\nu(\beta(t-r))}{t-r}  \, dr\cr
  \ar\leq\ar C\cdot \Big( \int_R^\infty \frac{\overline{\nu}(\beta r)}{\beta r}\,dr + \int_R^{t/2} \frac{\overline\nu(\beta(t-r))}{t-r}  \, dr \Big) \cr
  \ar\leq \ar C\cdot \Big( \int_R^\infty \frac{\overline{\nu}(\beta r)}{\beta r}\,dr + \int_{t/2}^\infty \frac{\overline\nu(\beta r)}{r}  \, dr \Big),
 \eeqnn
 which goes to $0$ as $t\to\infty$ and then $R\to\infty$. 
 Combining this together with \eqref{eqn.4026} yields the desired uniform limit. 
 This completes the proof.
 \qed

 \bibliographystyle{plain}
 
 \bibliography{Reference}

 \end{document}